\crefname{conjecture}{Conjecture}{Conjectures}
\Crefname{conjecture}{Conjecture}{Conjectures}
\titlespacing*{\section}{0pt}{0.1\baselineskip}{0.2\baselineskip}
\g@addto@macro\normalsize{%
  \setlength\abovedisplayskip{7pt}
  \setlength\belowdisplayskip{7pt}
  \setlength\abovedisplayshortskip{7pt}
  \setlength\belowdisplayshortskip{7pt}
}
\setlist{nolistsep}
\definecolor{Color1}{rgb}{0.0, 0.42, 0.47}
\definecolor{Color2}{rgb}{0.78, 0.11, 0.0}
\definecolor{ocre}{rgb}{0.78, 0.11, 0.0}
\titlespacing*{\chapter}{0pt}{3.5ex plus 0ex minus 0ex}{1.5ex plus 0ex}
\titlespacing*{\section}{0pt}{3.5ex plus 0ex minus 0ex}{1.5ex plus 0ex}
\titlespacing*{\subsection}{0pt}{3.5ex plus 0ex minus 0ex}{1.5ex plus 0ex}
\titlespacing*{\subsubsection}{0pt}{3.5ex plus 0ex minus 0ex}{1.5ex plus 0ex}
\newtheoremstyle{plain}{3mm}{3mm}{\slshape}{}{\bfseries}{.}{.5em}{}
\newtheoremstyle{definition}{2mm}{2mm}{}{}{\bfseries}{.}{.5em}{}
\theoremstyle{plain}
\newtheorem{theorem}{Theorem}
\newtheorem{lemma}[theorem]{Lemma}
\theoremstyle{definition}
\newtheorem{definition}[theorem]{Definition}
\newtheorem{question}[theorem]{Question}
\newtheorem{conjecture}{Conjecture}
\theoremstyle{plain}
\newcounter{MainTheoremCounter}
\newtheorem{Maintheorem}[MainTheoremCounter]{Theorem}
\crefname{Maintheorem}{Theorem}{main theorems}
\Crefname{Maintheorem}{Theorem}{Main Theorems}
\theoremstyle{definition}
\numberwithin{equation}{section}
\renewcommand{\cleardoublepage}{
\clearpage\ifodd\c@page\else
\hbox{}
\vspace*{\fill}
\thispagestyle{empty}
\newpage
\fi}
\renewcommand{\epsilon}{\varepsilon}
\renewcommand{\leq}{\leqslant}
\renewcommand{\geq}{\geqslant}
\renewcommand{\subset}{\subseteq}
\newcommand{\solution}[1]{{\begin{proof}[Solution]\let\qed\relax{#1}\end{proof}}}
\newcommand{\produp}{
  \mathchoice
    {\mathop{\ooalign{$\displaystyle\prod$\cr$\hfil\uparrow\hfil$}}}
    {\mathop{\ooalign{$\textstyle\prod$\cr$\hfil\scriptstyle\uparrow\hfil$}}}
    {\mathop{\ooalign{$\scriptstyle\prod$\cr$\hfil\scriptscriptstyle\uparrow\hfil$}}}
    {\mathop{\ooalign{$\scriptscriptstyle\prod$\cr$\hfil\scriptscriptstyle\uparrow\hfil$}}}
}
\newcommand{\proddown}{
  \mathchoice
    {\mathop{\ooalign{$\displaystyle\prod$\cr$\hfil\displaystyle\downarrow\hfil$}}}
    {\mathop{\ooalign{$\textstyle\prod$\cr$\hfil\scriptstyle\downarrow\hfil$}}}
    {\mathop{\ooalign{$\scriptstyle\prod$\cr$\hfil\scriptscriptstyle\downarrow\hfil$}}}
    {\mathop{\ooalign{$\scriptscriptstyle\prod$\cr$\hfil\scriptscriptstyle\downarrow\hfil$}}}
}
\begin{document}
\pagenumbering{arabic}
\setcounter{page}{1}

\titleformat{\title}
{\centering\LARGE\bfseries}{\MakeUppercase{\title}}{}{}

\titleformat{\author}
{\centering\normalsize\bfseries\MakeUppercase}{\author}{}{}

\let\cleardoublepage\clearpage

\title{Van der Waerden type theorem for amenable groups and FC-groups}
\author{Emilio Parini}
\date{}
\maketitle

\begin{abstract}
    We prove that for a discrete, countable, and amenable group $G$, if the direct product $G^2=G \times G$ is finitely colored then $\{ g \in G : \text{exists } (x,y) \in G^2 \text{ such that } \{ (x,y),(xg,y),(xg,yg)\} \text{ is monochromatic} \}$, is left IP$^{\ast}$. This partially solves a conjecture of V. Bergelson and R. McCutcheon. Moreover, we prove that the result holds for $G^m$ if $G$ is an FC-group, i.e., all conjugacy classes of $G$ are finite.
\end{abstract}

\tableofcontents

\section{Introduction}
In 1928 B. L. van der Waerden proved (see \cite{original-work}) that when partitioning the integers $\mathbb{Z}$ into a finite number of disjoint sets, i.e. $\mathbb{Z}= \bigcup_{j=1}^{r} C_j$, at least one of these sets $C_j$ must contain a finite arithmetic progression of any length. Today van der Waerden's Theorem is one of the cornerstones of Ramsey Theory. This inspired the multidimensional van der Waerden Theorem applicable to $\mathbb{Z}^m$, of which various proofs by different authors were given (see \cite{Rado1945NoteOC}, \cite{Witt}). H. Furstenberg and B. Weiss in \cite{Furstenberg1978TopologicalDA} derived the topological IP-van de Waerden Theorem by introducing a new approach that yields numerous corollaries. Since in this paper, we work with non-abelian groups, we introduce the notion of an IP set in the more general case where $G$ is not necessarily an abelian group: let $(x_n)_{n \in \mathbb{N}}$ be a sequence in $G$. We denote the left finite product set of $(x_n)_{n \in \mathbb{N}}$ with $\mathbf{FP}_L(x_n) =\{ \proddown_{n \in F} x_n : F \subseteq \mathbb{N}, 0 < \left| F \right| < \infty \}$ where $\proddown_{n \in F} x_n$ means that the indices are taken in decreasing order. For example let $G=\left< a,b \mid a^2 = b^2 = e_G \right>$ and let $A_{s},A_{e} \subseteq G$ be the sets of reduced words starting and ending with the letter $a$ respectively. We define similarly $B_s$ and $B_e$. Take $x_1=a$, $x_2=ab$, $x_3=abab$, and for $n \geq 2$, let $x_n=\underbrace{ab ab \ldots ab}_{(n-1)\text{-times}}$.  Then $\mathbf{FP}_L(x_n)=(A_s \cap B_e) \cup (A_s \cap A_e)$. A set $Y \subseteq G$ is called a left IP set if it contains a left finite product set. A set $X \subseteq G$ is called a left IP$^{\ast}$ set if for all left IP sets $Y$ we have $X \cap Y \neq \emptyset$. The refinements of the van der Waerden Theorem, often referred to IP-van der Waerden Theorem, take the following form:
\begin{theorem}[\texorpdfstring{See \cite[Theorem 4.8 on page 44]{Bergelson1996ErgodicRT}}{}] \label{Furstenberg_combinatorial} 
Let $G$ be an abelian group, and let $m \in \mathbb{N}$ be arbitrary. For any finite partition $G^m = \bigcup_{j=1}^{r} C_j$, there exists $1 \leq j \leq r$ such that the set
 \begin{align*}
         \left\{ g \in G \ \middle\vert \begin{array}{l}
     \text{ exists } (x_1,x_2\ldots,x_m) \in G^m \text{ such that } \\
    \{(x_1,x_2\ldots,x_m),(x_1 g, x_2 \ldots,x_m), (x_1 , x_2g, \ldots,x_m), \ldots,(x_1, x_2 , \ldots,x_mg) \} \subseteq C_j
  \end{array}\right\}
     \end{align*}
is an IP$^{\ast}$ set.
\end{theorem}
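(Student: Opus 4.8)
The plan is to derive this statement from the topological IP multiple recurrence theorem through the Furstenberg correspondence, so that the combinatorial ``return set'' becomes a set of simultaneous recurrence times of a single point.

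First I would build the symbolic model. Encode the partition $G^m=\bigcup_{j=1}^{r}C_j$ as the point $\xi\in\Omega:=\{1,\dots,r\}^{G^m}$ defined by $\xi(v)=j\iff v\in C_j$, and equip $\Omega$ with the product topology; it is then compact and metrizable, and I would fix a metric $d$ for which $d(\omega,\omega')$ is small precisely when $\omega$ and $\omega'$ agree on a large finite window around $\bar{e}:=(e_G,\dots,e_G)$, with $d(\omega,\omega')$ small enough forcing $\omega(\bar{e})=\omega'(\bar{e})$. For $1\le i\le m$ and $g\in G$ set $R_i^{g}(x_1,\dots,x_m)=(x_1,\dots,x_ig,\dots,x_m)$ and define $T_i^{g}:\Omega\to\Omega$ by $(T_i^{g}\omega)(v)=\omega(R_i^{g}v)$. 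Since $G$ is abelian these are pairwise commuting homeomorphisms; let $\Gamma$ be the commutative group they generate and $X:=\overline{\Gamma\xi}$ the orbit closure of $\xi$, a compact metric space on which the $m$ actions $g\mapsto T_i^{g}$ act by commuting homeomorphisms.

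The engine of the proof, and the hard part, is a topological IP multiple recurrence statement on $X$: there is a single point $y\in X$ that is simultaneously IP$^{*}$-recurrent, meaning that for every $\varepsilon>0$ the set $R_\varepsilon:=\{g\in G:\ d(T_i^{g}y,y)<\varepsilon\ \text{for all }1\le i\le m\}$ is IP$^{*}$. I would produce $y$ by forming the genuine $G$-action $\mathbf T^{g}:=T_1^{g}\times\cdots\times T_m^{g}$ on $X^{m}$ and seeking a diagonal point $\Delta(y)=(y,\dots,y)$ that recurs to itself, since $\mathbf T^{g}\Delta(y)$ close to $\Delta(y)$ is exactly $T_i^{g}y$ close to $y$ for all $i$. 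Passing to a minimal subsystem and running the Furstenberg--Weiss induction on the number $m$ of transformations in its IP-ring refinement --- or, equivalently, invoking a minimal idempotent $p\in\beta G$ and the idempotent relation in the enveloping semigroup --- yields such a point; membership of $R_\varepsilon$ in every idempotent ultrafilter is exactly the IP$^{*}$ property. The delicate point, and where commutativity of $G$ is genuinely used, is that recurrence must be forced to occur at a diagonal point with the same $g$ in all $m$ coordinates; this simultaneity is precisely the multiple-recurrence phenomenon and the crux of the argument.

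Finally I would transfer recurrence of $y$ back to the coloring $\xi$. Put $j:=y(\bar{e})$ and fix $\varepsilon$ small enough that $d(\omega,\omega')<\varepsilon$ forces agreement at $\bar{e}$. For $g\in R_\varepsilon$ one has $(T_i^{g}y)(\bar{e})=y(\bar{e})=j$, that is $y(R_i^{g}\bar{e})=j$ for every $i$, where $R_i^{g}\bar{e}$ is $\bar{e}$ with its $i$-th coordinate replaced by $g$. Once $g$ is fixed the set $F_g:=\{\bar{e}\}\cup\{R_i^{g}\bar{e}:1\le i\le m\}$ is finite, so, $y$ lying in the orbit closure of $\xi$, I can choose $\gamma=T_1^{h_1}\cdots T_m^{h_m}\in\Gamma$ with $(\gamma\xi)|_{F_g}=y|_{F_g}$. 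Using $(\gamma\xi)(v)=\xi(v_1h_1,\dots,v_mh_m)$ and commutativity, with $p:=(h_1,\dots,h_m)$ this gives
\[
\xi(p)=\xi(p_1,\dots,p_ig,\dots,p_m)=j\qquad(1\le i\le m),
\]
so the pattern $\{p,(p_1g,p_2,\dots,p_m),\dots,(p_1,\dots,p_mg)\}$ lies in $C_j$ and hence $g$ belongs to the set in the theorem for this fixed colour $j$. Thus that set contains $R_\varepsilon$, and since a superset of an IP$^{*}$ set is IP$^{*}$, it is IP$^{*}$, which is the claim. I would close by remarking that abelianness is used twice --- to make the $T_i^{g}$ commute (so that $X$ carries commuting actions and the recurrence theorem applies) and to rewrite $gh_i=p_ig$ in the transfer step --- which is exactly why the stated theorem is restricted to abelian $G$.
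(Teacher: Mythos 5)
You are attempting a statement that the paper itself does not prove --- \cref{Furstenberg_combinatorial} is quoted from Bergelson's survey --- so your proposal must be judged against the standard argument, and there is a genuine gap at its core. The engine you posit, a single point $y\in X$ for which $R_\varepsilon=\{g\in G : d(T_i^{g}y,y)<\varepsilon \text{ for all } i\}$ is IP$^{*}$ for \emph{every} $\varepsilon>0$, is strictly stronger than the Furstenberg--Weiss topological IP multiple recurrence theorem and is false in general. Already for $m=1$: if every $R_\varepsilon(y)$ is IP$^{*}$, then $R_\varepsilon(y)$ lies in every idempotent $p\in(\beta G,\bullet)$, hence $p\text{-}\lim_{g} T^{g}y=y$ for every idempotent $p$, which is exactly the classical characterisation of $y$ being a \emph{distal point}; but the orbit closure of a colouring can be, say, a nontrivial minimal weakly mixing subshift, which contains no distal point at all (there every proximal cell is dense). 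Your sketched justification does not rescue this: the diagonal of $X^{m}$ is not invariant under $\mathbf{T}^{g}=T_1^{g}\times\cdots\times T_m^{g}$, since the $T_i^{g}$ are different maps, so an idempotent in the enveloping semigroup produces a fixed point of $X^{m}$ with no reason to be diagonal --- forcing recurrence at a diagonal point is precisely the hard content of the theorem, and what Furstenberg--Weiss actually delivers is, for each $\varepsilon$ and each prescribed IP-ring, \emph{some} point $x=x(\varepsilon)$ and some $g$ in the given IP set with $d(T_i^{g}x,x)<\varepsilon$ for all $i$.

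This matters for your transfer step, where you fix the colour $j:=y(\bar e)$ once and for all. With the correct input the witnessing point varies with $\varepsilon$ and with the IP set being tested, so its colour at $\bar e$ varies too, and your argument then proves only that the \emph{union} over $j$ of the return sets is IP$^{*}$ --- the same form of conclusion that this paper's own colour-focusing machinery produces in \cref{thm_7.1.4}. Passing from ``the union is IP$^{*}$'' to ``some single cell is IP$^{*}$'' is not a formal step: for distinct idempotents $p\neq q$ pick $A\in p$ with $G\setminus A\in q$; then $A\cup(G\setminus A)=G$ is IP$^{*}$ while neither cell is. The standard route to the single-colour statement is measure-theoretic rather than topological: choose a cell $C_j$ of positive upper Banach density in $G^{m}$ and apply the Furstenberg--Katznelson IP-Szemer\'edi theorem to the $m$ commuting translation actions via the correspondence principle --- the same pattern this paper follows for $m=2$, $k=1$ through \cref{thm_6.2} and \cref{thm_6.4}. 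Two smaller points: metrizability of $\Omega$ requires $G$ countable, so one should first reduce to the countable subgroup generated by a given IP sequence (IP$^{*}$ is tested one IP set at a time); on the other hand, your symbolic encoding and the final transfer computation, including both places where commutativity of $G$ enters, are correct as far as they go.
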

From the density result in \cite{Austin2013NonconventionalEA} one can replace IP$^{\ast}$ with syndetic in \cref{Furstenberg_combinatorial} (see \cref{syndetic-thick-pws}). \cref{Furstenberg_combinatorial} raises the natural question of which non-abelian groups $G$ this can be extended to. However, in the non-abelian case it is more natural to look at configurations of the following form: 
 \begin{align*}   
    \left\{(x_1,x_2\ldots,x_m),(x_1 g, x_2 \ldots,x_m),(x_1g , x_2g, \ldots,x_m),\ldots,(x_1g, x_2 g, \ldots,x_mg) \right\}. 
     \end{align*}
N. Hindman and V. Bergelson, in their work \cite{free}, provided a counterexample for an infinitely generated free group. Whether these results are valid in amenable groups still remains an open question. In 2007 V. Bergelson and R. McCutcheon proved in \cite{Central} a slightly different result in $G^3$ with a weaker notion of largeness that will be properly defined later, namely the central set, denoted $\mathcal{C}$, and the central$^{\ast}$ set, denoted $\mathcal{C}^{\ast}$. In particular, they obtained that:

\begin{theorem}[\texorpdfstring{See \cite[Theorem 3.1 on page 1260]{Central}}{}]\label{Bergelson_McCutcheon_density}
Let $(X,\mathcal{A},\mu )$ be a Lebesgue probability space and $T,S$ be commuting and measure preserving anti-actions. For any $A \in \mathcal{A}$ with $\mu(A) >0$ there exists $\lambda=\lambda(A,T_g,S_g) >0$ such that \begin{align*}
     \{ g \in G: \mu(A \cap T_g^{-1}A \cap (T_gS_g)^{-1}A) > \lambda \}
 \end{align*}  is both right and left $\mathcal{C}^{\ast}$.
\end{theorem}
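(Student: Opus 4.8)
The plan is to pass to the Koopman representation, recognize the triple intersection as a Roth-type double recurrence average for two commuting transformations, and then force positivity of its limit along every minimal idempotent of $\beta G$.

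First I would put $f=\mathbf 1_A$ and introduce the Koopman operators $U_gf:=f\circ T_g$ and $V_gf:=f\circ S_g$ on $L^2(X,\mu)$. Because $T$ and $S$ are commuting, measure-preserving \emph{anti}-actions, one checks directly that $U_{gh}=U_gU_h$, $V_{gh}=V_gV_h$, and $U_gV_h=V_hU_g$; that is, the anti-actions become genuine commuting unitary representations at the operator level. Since $\mathbf 1_{T_g^{-1}A}=U_gf$ and $\mathbf 1_{(T_gS_g)^{-1}A}=U_gV_gf$, the quantity to be controlled is the correlation
\[
  I(g):=\mu\!\left(A\cap T_g^{-1}A\cap (T_gS_g)^{-1}A\right)=\int_X f\cdot U_gf\cdot U_gV_gf\,d\mu,
\]
which is exactly the corner average attached to the two commuting measure-preserving transformations $R_g:=T_g$ and $R'_g:=T_gS_g$. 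By definition a set is $\mathcal C^{\ast}$ precisely when it is a member of every minimal idempotent of $(\beta G,\cdot)$, the left and right versions corresponding to minimal left and right ideals. Hence it suffices to fix a minimal idempotent $p$ and prove $p\text{-}\lim_g I(g)\geq \mu(A)^3$; this forces $\{g:I(g)>\tfrac12\mu(A)^3\}\in p$, so $\lambda:=\tfrac12\mu(A)^3$ is a uniform (i.e.\ $p$-independent) threshold, and running the construction in both ideal structures gives the two-sided conclusion.

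Second I would compute this idempotent limit by the characteristic-factor scheme. The model case is instructive: let $\mathcal K$ be the maximal compact factor of the joint action generated by $U$ and $V$, and write $f_c:=\E[f\mid\mathcal K]$. A Hilbert-space van der Corput lemma, adapted to idempotent ultrafilters, shows that any moving factor lying in the weak-mixing complement contributes $0$ to $p\text{-}\lim_g I(g)$, so the compact factor is characteristic. The orbits $\{U_gf_c\}$ and $\{V_gf_c\}$ are precompact, and the elementary fact that along \emph{any} idempotent every eigenvalue limit equals $1$ gives $p\text{-}\lim_g U_gf_c=f_c$ and $p\text{-}\lim_g V_gf_c=f_c$ in $L^2$. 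Therefore
\[
  p\text{-}\lim_g I(g)=p\text{-}\lim_g\int_X f\cdot U_gf_c\cdot U_gV_gf_c\,d\mu=\int_X f\cdot f_c^2\,d\mu=\int_X f_c^{3}\,d\mu,
\]
and since $f_c\geq 0$ with $\int f_c\,d\mu=\mu(A)$, Jensen's inequality on the probability space yields $\int f_c^3\,d\mu\geq\mu(A)^3$, as required.

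In full generality one cannot rely on a single compact factor, and the real work is the Furstenberg--Katznelson structure theorem in the IP setting: $L^2(X,\mu)$ must be organised into a transfinite tower of compact (isometric) extensions over a weakly mixing base, and recurrence propagated up the tower by induction, with the van der Corput lemma neutralising each weakly mixing extension and a \emph{primitive} (compact-extension) recurrence step handling each isometric one. The hard part will be exactly this step: proving that weak-mixing components average to zero along the idempotent limit when the two coupled representations $U_g$ and $U_gV_g$ act simultaneously in a genuinely non-abelian $G$, and organising the induction so that largeness is preserved at every stage. This is where minimality of the idempotent is essential — the almost-periodic recurrence at each level of the tower must be matched against a minimal idempotent, and it is this bookkeeping, rather than the model computation above, that forces the conclusion to be $\mathcal C^{\ast}$ and accounts for both the left and right variants.
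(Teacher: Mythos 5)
First, note that the paper does not prove this statement at all: it is quoted verbatim from Bergelson--McCutcheon \cite[Theorem 3.1]{Central}, so the relevant comparison is with their original argument. Measured against either that argument or the truth, your proposal has a fatal quantitative gap: the pivot of your reduction is the claim that $p\text{-}\lim_g I(g)\geq\mu(A)^3$ for every minimal idempotent $p$, which would yield the uniform threshold $\lambda=\tfrac12\mu(A)^3$. No ergodicity is assumed in the statement, and this bound is false. Already for $G=\mathbb{Z}$ with $S=T$ the correlation becomes $\mu(A\cap T^{-n}A\cap T^{-2n}A)$, and Behrend-type constructions (as in the Bergelson--Host--Kra work on lower bounds for multiple recurrence) produce non-ergodic systems and sets of positive measure with $\mu(A\cap T^{-n}A\cap T^{-2n}A)\leq\mu(A)^{\ell}$ for all $n\neq 0$ and $\ell$ arbitrarily large; along any nonprincipal idempotent the limit then drops below $\tfrac12\mu(A)^3$. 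For genuinely distinct commuting transformations the exponent $3$ fails even under ergodicity (Chu's counterexample). This is exactly why the theorem allows $\lambda$ to depend on $A$, $T_g$, $S_g$: the correct conclusion is a $p$-independent \emph{positive} lower bound, not a universal power of $\mu(A)$. Your model computation fails at the corresponding structural point: for two commuting actions the compact (Kronecker-type) factor of the joint action is \emph{not} characteristic for the corner correlation $\int f\cdot U_gf\cdot U_gV_gf\,d\mu$, so the identity $p\text{-}\lim_g I(g)=\int f_c^3\,d\mu$ is not available even as the template for the general case.

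Second, even setting the false bound aside, the proposal defers the entire mathematical content: the Furstenberg--Katznelson tower, the van der Corput step for weakly mixing extensions along idempotent limits, and the preservation of largeness up the tower are flagged as ``the hard part'' but not executed, so what remains is a program rather than a proof. It is also not the route taken in \cite{Central}, whose argument is considerably leaner and involves no characteristic-factor hierarchy: they work directly with the orthogonal projection $Pf=p\text{-}\lim_g T_g f$ furnished by their Theorem 2.4 (restated in this paper as \cref{thm_6.1}), and the minimality of the idempotent is used for the specific operator-theoretic purpose of making $P$ commute with the unitary representation --- precisely the point the present paper highlights in the remark following \cref{thm_6.1}, and the reason this paper can drop minimality in its one-action setting --- after which a short Hilbert-space positivity argument extracts a lower bound $\lambda(A,T_g,S_g)>0$ that is independent of the chosen minimal idempotent. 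So minimality enters as a commutation device, not as bookkeeping for a transfinite tower. A smaller correction: left and right $\mathcal{C}^{\ast}$ refer to minimal idempotents of the two distinct semigroup structures $(\beta G,\bullet)$ and $(\beta G,\circ)$, not to ``minimal left and right ideals'' inside a single compactification.
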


With this result, they were able to prove a combinatorial version of van der Waerden's Theorem with $\mathcal{C}^{\ast}$ conclusions, which is formulated as:
\begin{theorem}[\texorpdfstring{See \cite[Theorem 4.4 on page 1270]{Central}}{}] \label{Bergelson_McCutcheon_combinatorial}
    Let $G$ be a countable, discrete, and amenable group. For any finite partition $ G^3 =\bigcup_{j=1}^{r} C_j $, there exists $1 \leq j \leq r$ such that the set
    \begin{align*}
        \{ g \in G: \text{exists } (a,b,c) \in G^3 \text{ such that } \{(a,b,c),(ag,b,c),(ag,bg,c),(ag,bg,cg) \} \subseteq C_j \}
    \end{align*}
    is both right and left $\mathcal{C}^{\ast}$.
\end{theorem}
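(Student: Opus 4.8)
The plan is to deduce the statement from the measure-theoretic recurrence in \cref{Bergelson_McCutcheon_density} by means of a Furstenberg-type correspondence principle adapted to the amenable, non-commutative setting. Encode the partition $G^3=\bigcup_{j=1}^r C_j$ as a coloring $\chi\colon G^3\to\{1,\dots,r\}$ with $\chi(x,y,z)=j$ iff $(x,y,z)\in C_j$. First I would form the compact space $\Omega=\{1,\dots,r\}^{G^3}$ and let $X\subseteq\Omega$ be the closure of the orbit of $\chi$ under the three coordinatewise translation anti-actions $T^{(1)},T^{(2)},T^{(3)}$ of $G$ (the anti-action convention of \cref{Bergelson_McCutcheon_density} being precisely what accommodates the right-multiplication configuration). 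Since $G$ is countable, $\Omega$ is compact metrizable, so $X$ is too. Choosing a weak-$\ast$ limit $\mu$ of the F\o lner averages $\tfrac{1}{|F_N|}\sum_{h\in F_N}\delta_{h\cdot\chi}$ along a F\o lner sequence $(F_N)$ in $G^3$ (available by amenability) produces a translation-invariant Borel probability measure, and $(X,\mathcal{A},\mu)$ is then a Lebesgue probability space under which each $T^{(i)}$ is measure preserving. Writing $A_j=\{\omega\in X:\omega(e,e,e)=j\}$ for the clopen color cylinders, we have $\sum_j\mu(A_j)=1$, so $\mu(A_{j_0})>0$ for some $j_0$.

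Next I would match the four-point configuration to a multiple-recurrence expression. With $T^{(1)}_g,T^{(2)}_g,T^{(3)}_g$ the anti-actions turning on $g$ in the first, second, and third coordinate, a point
\[ \omega\in A_{j_0}\cap (T^{(1)}_g)^{-1}A_{j_0}\cap (T^{(1)}_gT^{(2)}_g)^{-1}A_{j_0}\cap (T^{(1)}_gT^{(2)}_gT^{(3)}_g)^{-1}A_{j_0} \]
assigns the color $j_0$ simultaneously to the four translated origins corresponding to $(a,b,c),(ag,b,c),(ag,bg,c),(ag,bg,cg)$. Because the three anti-actions act on distinct coordinates they commute, so I would apply the three-commuting-anti-action form of \cref{Bergelson_McCutcheon_density} to $A_{j_0}$; this yields $\lambda>0$ such that the return set
\[ R_{j_0}=\{\,g\in G:\mu\big(A_{j_0}\cap (T^{(1)}_g)^{-1}A_{j_0}\cap (T^{(1)}_gT^{(2)}_g)^{-1}A_{j_0}\cap (T^{(1)}_gT^{(2)}_gT^{(3)}_g)^{-1}A_{j_0}\big)>\lambda\,\} \]
is both right and left $\mathcal{C}^{\ast}$.

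Finally I would transfer this largeness back through the correspondence. The four-fold intersection is clopen (the $T^{(i)}_g$ are homeomorphisms of $X$ and the cylinders are clopen), so its $\mu$-measure equals the limit of the F\o lner averages of its indicator; hence for $g\in R_{j_0}$ the set of $h\in G^3$ with $\chi$ equal to $j_0$ at all four points $h,\,h(g,e,e),\,h(g,g,e),\,h(g,g,g)$ has positive density and is in particular nonempty. Reading off the coordinates of such an $h$ produces a concrete witness $(a,b,c)\in G^3$ with $\{(a,b,c),(ag,b,c),(ag,bg,c),(ag,bg,cg)\}\subseteq C_{j_0}$. Thus $R_{j_0}$ is contained in the combinatorial set $D_{j_0}$ of the statement, and since $\mathcal{C}^{\ast}$ is upward closed, $D_{j_0}$ is both right and left $\mathcal{C}^{\ast}$; taking $j=j_0$ completes the argument.

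The hard part will be the correspondence itself: setting up the coordinatewise translations as genuinely commuting, measure-preserving anti-actions compatible with the right-multiplication configuration, and guaranteeing that positivity of the $\mu$-intersection returns an honest monochromatic configuration in $G^3$ rather than merely a point of the orbit closure. Tracking the two-sided (the $\beta G$ left- and right-topological) nature of $\mathcal{C}^{\ast}$ through this passage, and verifying that the three-anti-action strengthening of \cref{Bergelson_McCutcheon_density} still delivers the $\mathcal{C}^{\ast}$ conclusion, are the delicate points; I expect the passage from two to three commuting anti-actions to require no essentially new idea beyond those already present in \cref{Bergelson_McCutcheon_density}, since the added coordinate commutes with the others by construction.
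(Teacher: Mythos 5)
Your correspondence-principle reduction is standard and sound as far as it goes, but the recurrence input it feeds on does not exist: the step ``apply the three-commuting-anti-action form of \cref{Bergelson_McCutcheon_density}'' assumes exactly \cref{conjecture_1} with $k=3$, which is open. \cref{Bergelson_McCutcheon_density} is proved in \cite{Central} only for \emph{two} commuting anti-actions, i.e.\ a three-fold intersection $A\cap T_g^{-1}A\cap (T_gS_g)^{-1}A$, whereas your four-point configuration $(a,b,c),(ag,b,c),(ag,bg,c),(ag,bg,cg)$ requires the four-fold intersection $A\cap (T^{(1)}_g)^{-1}A\cap (T^{(1)}_gT^{(2)}_g)^{-1}A\cap (T^{(1)}_gT^{(2)}_gT^{(3)}_g)^{-1}A$. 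The passage from two to three commuting anti-actions is not a routine extension: the proof in \cite{Central} rests on a delicate use of projections attached to minimal idempotents (cf.\ the discussion around \cref{thm_6.1}), and its extension to $k\geq 3$ is precisely the conjecture of Bergelson and McCutcheon that motivates this paper; here the density result is only pushed to $k=1$, with the stronger IP$^{\ast}$ conclusion (\cref{thm_6.2}). Your closing expectation that the two-to-three passage ``requires no essentially new idea'' is exactly where the argument breaks.

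The actual route to the statement is different in kind. From the two-anti-action theorem one obtains, via the correspondence principle, a \emph{density} statement one dimension down: for $E\subseteq G^2$ of positive upper density, the set $\{g\in G:\overline{d}_{\Phi}(E\cap E(g^{-1},e_G)\cap E(g^{-1},g^{-1}))>0\}$ is $\mathcal{C}^{\ast}$ (this is \cite[Theorem 4.2]{Central}, as recalled before \cref{thm_6.4}). The extra coordinate and the fourth configuration point are then gained by a color-focusing induction, in which monochromatic positive-density sets are nested by suitably chosen shifts --- the $\mathcal{C}^{\ast}$ analogue of \cref{thm_7.1.3,thm_7.1.4} here. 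That partition-level argument, not a stronger multiple-recurrence theorem, is what converts three-term density recurrence in $G^2$ into the four-point monochromatic corner in $G^3$. A secondary, fixable point: positivity of $\mu$ on the clopen four-fold intersection yields a genuine configuration because the orbit of $\chi$ is dense in $X$ and the set is open; one should argue this way rather than claiming the measure equals the limit of the F\o lner averages, since the weak-$\ast$ limit defining $\mu$ is only along a subsequence.
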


In \cite{Central}, V. Bergelson and R. McCutcheon conjectured that \cref{Bergelson_McCutcheon_density} also holds for $k$ commuting and measure preserving anti-actions, for $k \in \mathbb{N}$ arbitrary, and this would imply that also \cref{Bergelson_McCutcheon_combinatorial}  holds in $G^m$ for $m \in \mathbb{N}$ arbitrary. Moreover, they conjectured that $\mathcal{C}^{\ast}$ conclusion can be upgraded to IP$^{\ast}$ conclusion. More precisely they have conjectured the following:
\begin{conjecture}\label{conjecture_1}
     Let $(X,\mathcal{A},\mu )$ be a Lebesgue probability space and $T^{(1)},\ldots, T^{(k)}$ be commuting and measure preserving anti-actions. For any $A \in \mathcal{A}$ with $\mu(A) >0$ there exists $\lambda=\lambda(A,T_g^{(1)},\ldots, T_g^{(k)}) >0$ such that \begin{align*}
     \{ g \in G: \mu(A \cap (T_g^{(1)})^{-1}A \cap \ldots \cap (T_g^{(1)} \ldots T_g^{(k)})^{-1}A) > \lambda \}
 \end{align*}  is left IP$^{\ast}$.
\end{conjecture}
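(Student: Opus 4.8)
The plan is to recast the left IP$^\ast$ assertion as a statement about IP-limits and then run an induction on $k$ with the Hilbert-space machinery of IP-ergodic theory (in the spirit of Furstenberg--Katznelson and of the Bergelson--McCutcheon argument behind \cref{Bergelson_McCutcheon_density}). Recall that $R \subseteq G$ is left IP$^\ast$ exactly when it meets the finite-product set $\mathbf{FP}_L(x_n)$ of every sequence $(x_n)$, which in ultrafilter language means that $R$ lies in every idempotent of the Stone--\v{C}ech compactification $\beta G$ carrying the convention matching $\mathbf{FP}_L$. Hence it suffices to fix such an idempotent $p$ and an IP-system $(x_\alpha)_\alpha$ with $x_\alpha = \proddown_{n \in \alpha} x_n$, write $U_g^{(i)}$ for the Koopman operator of the anti-action $T^{(i)}$ (so that $\mathbf{1}_{(T^{(1)}_g \cdots T^{(j)}_g)^{-1}A} = U_g^{(1)} \cdots U_g^{(j)} \mathbf{1}_A$, the factors commuting because the anti-actions do), and prove that the weak IP-limit along $p$ of
\[
\mathbf{1}_A \cdot \big(U_{x_\alpha}^{(1)} \mathbf{1}_A\big) \cdot \big(U_{x_\alpha}^{(1)} U_{x_\alpha}^{(2)} \mathbf{1}_A\big) \cdots \big(U_{x_\alpha}^{(1)} \cdots U_{x_\alpha}^{(k)} \mathbf{1}_A\big)
\]
integrates to a quantity bounded below by some $\lambda>0$ (depending on $A$ and the anti-actions but not on the IP-system). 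Existence of the limit is automatic from weak compactness of the unit ball of $\Ltwo$; the whole content is the positive lower bound, and its uniformity over idempotents is exactly what delivers a single $\lambda$ for the IP$^\ast$ conclusion.

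For the inductive step I would assume the IP-limit lower bound for $k-1$ commuting anti-actions and reduce the $k$-fold correlation to it. The engine is the IP van der Corput lemma: differencing the outermost factor $U_{x_\alpha}^{(1)} \cdots U_{x_\alpha}^{(k)} \mathbf{1}_A$ against a product-shifted copy turns the $k$-fold average into an average of $(k-1)$-fold correlations of differences. Coupled with a Furstenberg--Zimmer characteristic-factor analysis, this shows that the component of $\mathbf{1}_A$ orthogonal to the relevant compact (distal) factor contributes nothing in the IP-limit, so one may replace $\mathbf{1}_A$ by its projection onto an almost-periodic factor. On that factor the orbit $\{(U_{x_\alpha}^{(1)} \mathbf{1}_A, \ldots, U_{x_\alpha}^{(1)} \cdots U_{x_\alpha}^{(k)} \mathbf{1}_A)\}_\alpha$ is precompact, so the idempotency of $p$ furnishes, inside any finite-product set, indices $\alpha$ at which all $k$ functions $U_{x_\alpha}^{(1)} \cdots U_{x_\alpha}^{(j)} \mathbf{1}_A$ lie simultaneously within $\epsilon$ of $\mathbf{1}_A$ in $\Ltwo$; the integral is then at least $\mu(A) - k\epsilon$, giving the desired $\lambda$. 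This simultaneous IP-return-times phenomenon is what ultimately produces the IP$^\ast$ conclusion, and the base case $k=1$ is the classical IP Khintchine-type recurrence.

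The step I expect to be the genuine obstacle is controlling the non-commutativity of $G$ inside the differencing. When the outer index is split as a product $x_\alpha x_\beta$, the composed operators generate terms $U_{x_\beta}^{(i)} U_{x_\alpha}^{(i)}$ whose reorganisation into the shape demanded by the $(k-1)$-fold hypothesis requires moving group elements past one another, the discrepancy being governed by conjugates of the form $x_\beta^{-1} x_\alpha x_\beta$. For $k=2$ no such element must be transported across more than a single factor, so the discrepancy can be absorbed and the amenable $G^2$ case closes unconditionally. For $k \geq 3$ the conjugates accumulate, and the argument only terminates if they range over a finite set that the IP bookkeeping can ignore --- precisely the FC hypothesis that every conjugacy class of $G$ is finite. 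Arranging that these finitely many conjugates interact correctly with the idempotent-ultrafilter structure, so that the IP-return set survives intersection with the cosets they impose, is the crux; absent the FC condition this control is unavailable, which is exactly why only the $k=2$ and the FC-group cases are obtained here while the conjecture for general $k$ in arbitrary amenable groups stays open.
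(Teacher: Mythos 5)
Your proposal does not match what the paper actually does, and its central inductive step is precisely the open problem rather than a proof of it. The paper proves \cref{conjecture_1} only for $k=1$ (\cref{thm_6.2}), via the elementary projection argument of \cref{thm_6.1}: for an arbitrary idempotent $p\in(\beta G,\bullet)$, $Pf=p\text{-}\lim_g U_g f$ is an orthogonal projection, whence $p\text{-}\lim_g \mu(A\cap T_g^{-1}A)\geq \mu(A)^2$ --- no van der Corput, no characteristic factors. The step where your sketch breaks is the reduction from $k$ to $k-1$ along an arbitrary idempotent. Already for $k=2$, making the projection $P$ interact with the second anti-action requires $PU_g=U_gP$, and, as the paper points out in the discussion following \cref{thm_6.1}, Bergelson and McCutcheon can establish this commutation only when $p$ is a \emph{minimal} idempotent; this is exactly why their conclusion in \cref{Bergelson_McCutcheon_density} is central$^{\ast}$ and not IP$^{\ast}$. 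Your claim that ``the amenable $G^2$ case closes unconditionally'' --- i.e., that $k=2$ of \cref{conjecture_1} holds --- is therefore unsubstantiated, and in fact, were it available, the paper's color-focusing machinery (\cref{thm_7.1.3,thm_7.1.4}) would immediately give \cref{conjecture_2} for $m=3$, which the paper explicitly leaves open. More broadly, the IP van der Corput lemma and the Furstenberg--Katznelson characteristic-factor analysis you invoke are developed for commuting actions of abelian (semi)groups; for a single non-abelian amenable $G$ acting through the staircase pattern $T_g^{(1)}, T_g^{(1)}T_g^{(2)},\ldots$ along arbitrary idempotents, this theory does not exist, and supplying it is the missing idea, not a bookkeeping matter about conjugates $x_\beta^{-1}x_\alpha x_\beta$.

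The paper's actual route around this obstacle is entirely different from your plan, for both main theorems. For $m=2$ it uses only single IP recurrence: \cref{thm_6.2} is transferred through the Furstenberg correspondence principle to a density statement in $G^2$ (\cref{thm_6.4}, giving that $\{g:\overline{d}_{\Phi}(E\cap E(g^{-1},e_G))>0\}$ is left IP$^{\ast}$), and the three-point configuration $\{(x,y),(xg,y),(xg,yg)\}$ is then assembled \emph{combinatorially} by the color-focusing argument of \cref{thm_7.1.3,thm_7.1.4,thm_7.1.5}, never proving any $k\geq 2$ case of \cref{conjecture_1}. Similarly, the FC-group theorem is not obtained by controlling finitely many conjugates inside a differencing argument: it is proved at the combinatorial level via van der Waerden families (\cref{thm_7.2.1,thm_7.2.2}), and the FC hypothesis enters only through the fact that in FC-groups left syndetic sets are right syndetic, which makes the family of left piecewise syndetic sets possess the right van der Waerden property (\cref{FC-are VDW groups}). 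So your proposal both overstates what can be proved (the ergodic $k=2$ case) and misattributes the mechanism by which the paper's two theorems are actually obtained.
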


\begin{conjecture}\label{conjecture_2}
     Let $G$ be a countable, discrete, and amenable group. Let $m \in \mathbb{N}$ be arbitrary, for any finite partition $ G^m =\bigcup_{j=1}^{r} C_j $, we have that there exists $1 \leq j \leq r$ such that the set
     \begin{align*}
         \left\{ g \in G \ \middle\vert \begin{array}{l}
    \text{exists }  (x_1,x_2\ldots,x_m) \in G^m \text{ such that } \\
    \{(x_1,x_2\ldots,x_m),(x_1 g, x_2 \ldots,x_m),\ldots,(x_1g, x_2 g\ldots,x_mg) \} \subseteq C_j
  \end{array}\right\}
     \end{align*}

    is left IP$^{\ast}$.
\end{conjecture}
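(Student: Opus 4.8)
The plan is to pass from the coloring statement to a topological IP recurrence statement for a family of transformations that commute for each fixed group element but fail to commute globally when $G$ is non-abelian, and then to attack that recurrence by induction on $m$ with the $G^2$ case as the base. First I would set up the correspondence principle. Given a finite coloring $G^m=\bigcup_{j=1}^{r}C_j$, regard the coloring as a point $\chi\in\Omega:=\{1,\dots,r\}^{G^m}$ and let $G$ act by the $m$ commuting homeomorphisms $(S^{(i)}_g\omega)(x_1,\dots,x_m)=\omega(x_1,\dots,x_ig,\dots,x_m)$, $i=1,\dots,m$. Writing $V_k(g):=S^{(1)}_g\cdots S^{(k)}_g$, a short computation shows each $V_k$ is a genuine left $G$-action (since same-$g$ factors on distinct coordinates commute, $V_k(g)V_k(h)=V_k(gh)$), that for a \emph{fixed} $g$ the family $\{V_k(g)\}_k$ is commuting, and that the value-at-the-origin map $c(\omega)=\omega(e,\dots,e)$ is continuous, finite-valued, and records exactly the color of the $k$-th staircase point: $c(V_k(g)\chi)=\chi(g,\dots,g,e,\dots,e)$ with $k$ copies of $g$. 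Hence a monochromatic staircase based at some $(x_1,\dots,x_m)$ corresponds to a point $\omega$ in the orbit closure $X=\overline{\{\,(S^{(1)}_{g_1}\cdots S^{(m)}_{g_m})\chi : g_1,\dots,g_m\in G\,\}}\subseteq\Omega$ with $c(\omega)=c(V_1(g)\omega)=\cdots=c(V_m(g)\omega)$.

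Thus it suffices to prove the following recurrence: for commuting $G$-actions $S^{(1)},\dots,S^{(m)}$ by homeomorphisms of a compact metric space $X$,
\[
\text{for every left IP set } P\subseteq G \text{ and } \varepsilon>0,\ \exists\, x\in X,\ g\in P:\quad d\bigl(V_k(g)x,x\bigr)<\varepsilon\ \ (1\le k\le m).
\]
I would phrase this through the algebra of $\beta G$: with the appropriate one-sided (left) convention, a set is left IP$^{\ast}$ iff it lies in every idempotent ultrafilter, so establishing the return set is left IP$^{\ast}$ amounts to producing, for every idempotent $p\in(\beta G,\cdot)$, a single $x$ with $p\text{-}\lim_{g}V_k(g)x=x$ \emph{simultaneously} for all $k$. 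The single-color and IP$^{\ast}$ conclusions of the conjecture then follow by the standard partition-regularity of ultrafilter membership applied to the finite coloring together with this recurrence, exactly as in the derivation of \cref{Furstenberg_combinatorial}; this is the topological counterpart of the route by which Bergelson--McCutcheon reduce \cref{conjecture_2} to \cref{conjecture_1}, and upgrading a C$^{\ast}$-type statement to IP$^{\ast}$ becomes automatic because recurrence is now sought along an arbitrary idempotent rather than only along a minimal one.

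The engine for the recurrence would be an induction on $m$, taking the $G^2$ case (two coordinates, a single nontrivial $V$, established for all amenable $G$) as the base and using the commutativity of $\{V_k(g)\}_k$ for fixed $g$ to run a Bergelson--McCutcheon IP van der Corput / PET-type reduction that lowers the number of active transformations. Concretely, along an idempotent $p$ one writes the multicorrelation for $V_1,\dots,V_m$, applies the van der Corput trick to the top action $V_m$, and is left with correlations governed by the ``differences'' $V_k(g)^{-1}V_l(g)=S^{(k+1)}_g\cdots S^{(l)}_g$, which are again partial-product actions in fewer coordinates — so the commuting-per-element structure is precisely what makes the induction close \emph{on the diagonal}. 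The main obstacle, and the reason the conjecture is only settled here for $m=2$ (all amenable $G$) and for FC-groups (all $m$), is that the van der Corput averaging does not stay on the diagonal: combining $p$-recurrence across two independent scales $g,h$ of the IP set forces products $V_k(g)V_l(h)$ with $g\neq h$, and rearranging these into partial-product form produces conjugates $h^{-1}x_i g h$ that do not commute when $G$ is non-abelian. In an FC-group these conjugates range over finite sets and can be absorbed into a finite thickening of the F\o lner averages, which is exactly what closes the induction there; for a general countable amenable $G$ I would instead try to tame them by replacing the plain F\o lner averaging with an Ornstein--Weiss quasi-tiling argument, choosing tiles inside a rapidly growing IP-subring so that the conjugation error is supported on a tile boundary of vanishing density. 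Making this quantitative control uniform over all $m$ simultaneously — i.e.\ proving that the non-abelian cross terms are negligible in the invariant mean along every idempotent ultrafilter — is the crux on which a full resolution of \cref{conjecture_2} rests, and is the step I expect to be genuinely hard.
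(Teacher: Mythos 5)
Your write-up is a research program, not a proof, and by its own admission it leaves open exactly the step that matters: the paper states this result as \cref{conjecture_2} precisely because the multiple-recurrence route you sketch is unresolved, and your final paragraph concedes that controlling the non-abelian cross terms ``is the step I expect to be genuinely hard.'' Concretely, the gap is in the van der Corput/PET reduction: applying it along an idempotent produces correlations of $V_k(g)^{-1}V_l(h)$ at two \emph{independent} IP scales $g\neq h$, and after rearrangement these involve conjugates $h^{-1}x_igh$ that are not partial-product transformations in a non-abelian group, so the induction does not close on the diagonal. Your proposed repairs --- absorbing conjugates into a finite F\o lner thickening for FC-groups, or Ornstein--Weiss quasi-tiling in general --- are stated without any estimates, and it is unclear they can work at all in this setting: recurrence along an idempotent is a $p$-limit, not an average, so ``boundary of vanishing density'' error terms have no direct meaning there. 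There is also a soundness issue upstream: your correspondence reduces the coloring statement to simultaneous topological IP recurrence of all the $V_k$ at a single point along \emph{every} idempotent, which is stronger than needed and is exactly the kind of semicommutative topological van der Waerden statement for which Bergelson and Hindman \cite{free} construct counterexamples in free groups; amenability must therefore enter before this reduction, not be retrofitted afterwards. Finally, you take the $G^2$ case as ``established for all amenable $G$,'' but that is \cref{Main_VdW-2}, a main theorem of the paper, not available background for a blind proof.

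It is worth noting how the paper's partial resolution sidesteps your obstruction entirely: it never performs a multiple-recurrence induction. For $m=2$ it uses only \emph{single} IP Poincar\'e recurrence (\cref{thm_6.2}, a one-projection Hilbert space argument giving $\int \mathbf{1}_A P\mathbf{1}_A \, d\mu \geq \mu(A)^2$), transfers it by the Furstenberg correspondence to a density statement on $G^2$ (\cref{thm_6.4}), and then replaces the PET step with a purely combinatorial color-focusing induction (\cref{thm_7.1.3}, \cref{thm_7.1.4}): nested IP sets are extracted as product subsystems living in a common idempotent, positive-density monochromatic sets $A_0,\dots,A_r$ are successively shifted into one another, and a pigeonhole on $r+1$ sets versus $r$ colors produces the configuration with the shift $u_{j,i}$ in the prescribed IP set. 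For FC-groups and arbitrary $m$, the paper again avoids averaging: it shows the family of left piecewise syndetic sets has the right van der Waerden property (\cref{FC-are VDW groups}), crucially using that left and right syndeticity coincide in FC-groups, and then inducts on $k$ combinatorially (\cref{thm_7.2.2}). So your FC-group suggestion is a genuinely different, but unexecuted, idea, and the general case of your program coincides with the open \cref{conjecture_1} for $k \geq 2$ rather than proving it.
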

 
The purpose of this paper is to prove that \cref{conjecture_2} holds for $m=2$.
To do this we begin by proving that \cref{conjecture_1} holds for $k=1$ and we combine it with a technical color-focusing lemma, namely \cref{thm_7.1.3}. We obtain in \cref{subsec:vdw amenable dimension 2} a combinatorial version of the IP-van der Waerden Theorem for $m=2$. Our new color-focusing lemma allows us to iteratively focus on sets with positive upper density and that if shifted appropriately are nested. At the same time, the color-focusing lemma allows us to keep track of the IP set structure, indeed the shift can be chosen in a specific IP set. Consequently, we establish that: 

\begin{Maintheorem}\label{Main_VdW-2}
Let $G$ be a discrete, countable and amenable group, for any finite partition $ G^2 =\bigcup_{j=1}^{r} C_j $, we have that there exists $1 \leq j \leq r$ such that the set
\begin{align*}
   \{ g \in G : \text{ there exists } (x,y) \in G^2 \text{ such that } \{ (x,y),(x g, y),(x g, y g)\} \subseteq C_j \}
\end{align*}
is left IP$^{\ast}$.
\end{Maintheorem}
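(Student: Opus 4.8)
The plan is to reduce the statement to a double-recurrence estimate and to obtain that estimate by chaining a single IP$^*$ recurrence with the color-focusing lemma \cref{thm_7.1.3}. To set up a correspondence, fix a left F\o lner sequence of the amenable group $G^2$; after passing to a subsequence, one color class $C = C_j$ has positive upper density, and this is the $j$ for which I will prove the conclusion. Writing $\sigma_g(x,y) = (xg,y)$ and $\tau_g(x,y) = (x,yg)$ for the right shifts in the two coordinates, $\sigma$ and $\tau$ are commuting measure-preserving anti-actions of $G$, and the staircase $\{(x,y),(xg,y),(xg,yg)\}$ lies in $C$ precisely when its base point lies in $C \cap \sigma_g^{-1}C \cap (\tau_g\sigma_g)^{-1}C$. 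Realizing these shifts on a probability space $(X,\mathcal A,\mu)$ via the correspondence principle, with $\mu(A)$ equal to the density of $C$, it suffices to show that for every IP set $Y$ there is $g \in Y$ with $\mu\big(A \cap \sigma_g^{-1}A \cap (\tau_g\sigma_g)^{-1}A\big) > 0$; since $Y$ is arbitrary, this says exactly that the good set meets every IP set, i.e. is left IP$^*$.

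Second, I would prove the analytic engine, \cref{conjecture_1} for $k=1$: for a measure-preserving anti-action $T$ and $\mu(A)>0$, the set $\{g : \mu(A \cap T_g^{-1}A) > \mu(A)^2/2\}$ is left IP$^*$. Passing to the Koopman representation $U_g f = f\circ T_g$, so that $\mu(A\cap T_g^{-1}A) = \langle U_g\mathbf 1_A,\mathbf 1_A\rangle$, the key is a Khintchine bound along IP sets: given $Y = \mathbf{FP}_L(y_n)$, Hindman's theorem lets me pass to an IP-subsystem along which the operators $U_{\proddown_{n\in F}y_n}$ converge weakly to a self-adjoint idempotent $P$ fixing the constants. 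As $\mu$ is a probability measure this yields $\langle P\mathbf 1_A,\mathbf 1_A\rangle \geq \mu(A)^2$, so some finite product $g \in Y$ satisfies $\mu(A\cap T_g^{-1}A) > \mu(A)^2/2$. The IP set being arbitrary, the return set is IP$^*$.

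Third comes the combinatorial core, the color-focusing lemma, which must turn this single recurrence into the two-step staircase while keeping the increment inside $Y$. The reason one cannot merely apply single recurrence twice is the coupling of the two steps through a common $g$: writing $B_g = A\cap\tau_g^{-1}A$, one wants $\mu(A\cap\sigma_g^{-1}B_g)>0$, yet $B_g$ depends on the very increment $g$. Focusing circumvents this by not fixing $g$ in advance. Using the $k=1$ recurrence repeatedly, I would build a finite nested tower of positive-upper-density sets obtained by shifting $C$ along elements drawn from pairwise disjoint index blocks of $\mathbf{FP}_L(y_n)$; over at most $r+1$ stages a ``focal'' region is colored, and with only $r$ colors two stages must agree. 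The finite-products structure is what closes the argument: the increments of the two coinciding stages amalgamate, via the union of their disjoint index blocks, into a single $g\in Y$, and the color agreement forces the completed staircase, based on a positive-density set of points, to be monochromatic in $C$.

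I expect this last step to be the main obstacle: one must run the focusing so that simultaneously (i) every set in the tower keeps positive upper density---this is exactly why the $k=1$ conclusion must be IP$^*$ recurrence with a density lower bound rather than mere nonemptiness; (ii) colors can be compared across stages by pigeonhole; and (iii) the increments produced at different stages amalgamate, through the finite-union structure of $Y$, into one increment $g\in Y$ that completes both steps of the staircase at once. Granting \cref{thm_7.1.3} with this threefold control, applying it to the densest class $C$ and letting $Y$ range over all IP sets shows the asserted set is left IP$^*$, which is \cref{Main_VdW-2}.
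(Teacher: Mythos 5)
Your second paragraph is essentially the paper's \cref{thm_6.2} (the idempotent-limit projection $P$, its self-adjointness, and the Khintchine-type bound), and the threefold control (i)--(iii) you demand of the focusing step is the right checklist for the paper's \cref{thm_7.1.3}. But the way you assemble these pieces contains a genuine gap. Your opening move --- fix one correspondence system for the densest class $C$ and reduce the theorem to finding, in every left IP set, some $g$ with $\mu\big(A\cap\sigma_g^{-1}A\cap(\tau_g\sigma_g)^{-1}A\big)>0$ --- is a reduction to \cref{conjecture_1} for $k=2$, which is exactly the open Bergelson--McCutcheon conjecture; what is known there is only the $\mathcal{C}^{\ast}$ version (\cref{Bergelson_McCutcheon_density}), and the whole point of the paper is to \emph{avoid} proving any double-recurrence estimate. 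The color-focusing lemma cannot fulfill your reduction: at each stage it re-colors and passes to a monochromatic positive-density subset $A_l$ (whose color may change from stage to stage), and the final pigeonhole produces a staircase monochromatic in whatever color $A_i$ and $A_j$ happen to share --- not in the densest class $C$, and with no lower bound on a triple intersection in the fixed system. If your chaining delivered the stated measure estimate for every system and every IP set, it would prove \cref{conjecture_1} for $k=2$ outright, which is a red flag that the plan overreaches. For the same reason your fixing of $j$ at the outset ("this is the $j$ for which I will prove the conclusion") is not delivered by focusing: what the argument yields is the paper's $D_2(\chi)$ being left IP$^{\ast}$, with the good color depending on $g$, and IP$^{\ast}$ sets are not partition regular, so one cannot pass from the union over colors to a single color class for free.

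The paper's actual route runs the focusing purely combinatorially in $G^2$ and invokes correspondence not once but afresh at every stage, and only for \emph{single} recurrence in the first coordinate: \cref{thm_6.4} (correspondence on $\{0,1\}^{G\times G}$ plus \cref{thm_6.2}) shows that for any positive-density $E\subseteq G^2$ the set $\{g:\overline{d}_{\Phi}(E\cap E(g^{-1},e_G))>0\}$ is left IP$^{\ast}$; \cref{thm_7.1.5} packages this as the hypothesis of the focusing lemma, and the swap-and-translate step \cref{thm_7.1.2} (which your sketch omits) converts "base and first step in $A$" into "top two steps of the staircase in $A$", which is the shape the induction actually consumes. The nesting of increments is handled by Hindman-type subsystem extraction: since the good set $D$ is IP$^{\ast}$, it lies in the idempotent ultrafilter containing all tails of $\mathbf{FP}_L(y_n^{(l-1)})$, so $D\cap Y_{l-1}$ contains a full product subsystem $\mathbf{FP}_L(y_n^{(l)})$; the amalgamation $u_{r,i}=u_{r,r-1}u_{r-1,i}\in Y_i$ then uses the global-element property of nested product subsystems --- note that your "union of disjoint index blocks" needs care with $\proddown$ in the non-abelian setting, since a product of two left finite products is again one only when one index block lies entirely above the other and the multiplication order respects this. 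With the double-recurrence reduction deleted and the conclusion stated as $D_2(\chi)$ left IP$^{\ast}$ via \cref{thm_7.1.4}, your outline becomes the paper's proof.
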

An analog result also holds if we consider the following corner set: $\{ (x,y),(gx , y),(gx , gy )\}$. In this case we get that $\{ g \in G : \text{ there exists } (x,y) \in G^2 \text{ such that } \{ (x,y),(gx, y),(gx , gy )\} \subseteq C_i \} $ is a right IP$^{\ast}$ set. Similarly as before, a right IP$^{\ast}$ set contains a right finite product set for some sequence $(x_n)_{n \in \mathbb{N}}$ in $G$, that is a set of the form: $\mathbf{FP}_R(x_n) =\{ \produp_{n \in F} x_n : F \subseteq \mathbb{N}, 0 < \left| F \right| < \infty \}$. Note that in a right finite product set the indices are taken in increasing order. We also prove that \cref{conjecture_2} holds for FC-groups which are groups where each conjugacy class is finite. More precisely, we show that:
  \begin{Maintheorem}\label{van-der-waerden-holds VDW group}
    Let $G$ be an FC-group and $m \in \mathbb{N}$ be arbitrary. For any finite partition $ G^m =\bigcup_{j=1}^{r} C_j $, we have that there exists $1 \leq j \leq r$ such that the set
    \begin{align*}
         \left\{ g \in G \ \middle\vert \begin{array}{l}
    \text{exists } (x_1,x_2\ldots,x_m) \in G^m \text{ such that } \\
    \{(x_1,x_2\ldots,x_m),(x_1 g, x_2 \ldots,x_m),\ldots,(x_1g, x_2 g\ldots,x_mg) \} \subseteq C_j
  \end{array}\right\}
     \end{align*}
    is left IP$^{\ast}$.      
\end{Maintheorem}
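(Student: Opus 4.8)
The plan is to deduce the statement from the abelian case \cref{Furstenberg_combinatorial} by exploiting the two defining features of an FC-group. First, for $g\in G$ and $1\le k\le m$ let $R^{(k)}_g$ denote the self-map of $G^m$ that right-multiplies only the $k$-th coordinate by $g$, so that the staircase is the orbit $P_k=\bigl(R^{(1)}_g\circ\cdots\circ R^{(k)}_g\bigr)(x_1,\dots,x_m)$ for $0\le k\le m$. Maps acting on \emph{different} coordinates commute unconditionally, so the $P_k$ are unambiguously defined for every group $G$; the only obstruction to running the commuting-transformations machinery behind \cref{Furstenberg_combinatorial} is the failure of $R^{(k)}_g$ and $R^{(k)}_h$ to commute on a shared coordinate, and this failure is measured exactly by the commutator $[g,h]\in N:=[G,G]$. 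The point of the FC hypothesis is that this obstruction is small in two complementary senses: by a classical theorem of B.~H.~Neumann, in an FC-group $N$ is a \emph{locally finite} normal subgroup and $G/N$ is abelian; and conjugation by any element has finite orbits, so a torsion correction can always be pushed past a shift, $g\,t=({}^{g}t)\,g$, while staying inside the finite conjugacy class of $t$.

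With these facts in hand I would argue as follows. Passing to the abelian quotient $\pi\colon G\to G/N$, the projected configuration is the ordinary staircase in the abelian group $G/N$, to which \cref{Furstenberg_combinatorial} (via the color-focusing mechanism of \cref{thm_7.1.3}) applies and produces, for any prescribed left IP set of admissible shifts, a shift $\bar g$ and a basepoint realizing a configuration that is monochromatic \emph{modulo $N$}. The task is then to lift this to an \emph{exact} monochromatic staircase in $G^m$. Here one uses that the original coloring of $G^m$ is finite and that, at each finite stage of the focusing construction, only finitely many group elements — hence, by local finiteness, only finitely many torsion values and commutators in $N$ — are involved. I would therefore fold the relevant finite subgroup $F\le N$ generated by these commutators and torsion parts into an enlarged finite coloring: on the finite portion of $G^m$ that is active at a given stage, the transformations $R^{(k)}_g$ genuinely commute after quotienting by $\langle\langle F\rangle\rangle$, so the abelian recurrence applies verbatim, and the finite-conjugacy property lets me reconcile the residual torsion bookkeeping across coordinates without enlarging $F$.

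To handle general $m$ I would proceed by induction, using that the first $m$ points $P_0,\dots,P_{m-1}$ restrict to an $(m-1)$-staircase at the fixed last coordinate $x_m$, while $P_m$ is obtained from $P_{m-1}$ by the single further shift $R^{(m)}_g$. Since $R^{(m)}_g$ acts on the previously untouched coordinate, it commutes with everything built so far, so the inductive configuration extends by one commuting return-time step; the FC correction described above is what guarantees that the shared shift $g$ can be taken identical in all $m$ coordinates. For the left IP$^{\ast}$ conclusion I would phrase the recurrence in $\beta G$: it suffices to show the good-set for a single color lies in every idempotent ultrafilter, and the refinement producing the finite subgroup $F$ at each stage is carried out inside the sets of a fixed (but arbitrary) idempotent, using Hindman's theorem to pass to a sub-IP-set on which the torsion data stabilizes, exactly as the passage from IP recurrence to an IP$^{\ast}$-rich good-set is effected in the abelian setting.

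The main obstacle is precisely the lifting step: producing a configuration that is monochromatic \emph{on the nose} in $G^m$, rather than only modulo $N$, while keeping the set of admissible shifts large enough to meet \emph{every} left IP set (equivalently, to belong to every idempotent of $\beta G$). Controlling the torsion corrections uniformly across all finite stages and across all idempotents simultaneously is where both the local finiteness of $N$ and the finiteness of conjugacy classes are indispensable, and it is exactly this point that forces the FC hypothesis rather than mere amenability — which is why the purely amenable case is obtained only for $m=2$ in \cref{Main_VdW-2}.
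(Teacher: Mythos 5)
Your proposal has a genuine gap at exactly the point you flag as ``the main obstacle'': the lifting step does not work, and no amount of torsion bookkeeping of the kind you describe can repair it. A finite coloring $\chi$ of $G^m$ need not be invariant under the normal subgroup $N=[G,G]$, so it induces \emph{no} coloring of $(G/N)^m$; applying \cref{Furstenberg_combinatorial} to the quotient therefore produces a configuration that is monochromatic only for some coloring you would first have to manufacture on $G/N$, and a configuration monochromatic ``modulo $N$'' carries no color information back in $G^m$ when $N$ is infinite (local finiteness of $[G,G]$, which is indeed correct for FC-groups by Neumann's theorem and Dietzmann's lemma, does not make $N$ finite --- consider $\bigoplus_{n}\mathbf{Dih}_n$). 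Your proposed fix --- fold a finite subgroup $F\le N$ ``generated by the commutators and torsion parts involved at each finite stage'' into an enlarged coloring --- is circular: which commutators arise depends on the shift $g$ and basepoints that the abelian theorem outputs, and these are unconstrained in advance, so $F$ cannot be fixed before the coloring is refined, while refining the coloring changes the output. The analogous one-dimensional claim, that a partition theorem for $G$ follows from the same theorem for a quotient $G/N$ with $N$ infinite torsion, is false for general colorings, which is a concrete sign the strategy cannot close. Separately, your induction on $m$ asserts that ``the FC correction guarantees the shared shift $g$ can be taken identical in all $m$ coordinates,'' but this is precisely the whole difficulty restated, not an argument.

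The paper's route is entirely different and never quotients by $[G,G]$. It axiomatizes the properties needed of the ``large'' sets replacing positive-density sets: a family $\mathcal{W}$ of subsets of $G^m$ with the right van der Waerden property (partition regular, closed under right translation, and satisfying the almost van der Waerden property of \cref{def_almost_vdw}). \cref{thm_7.2.1} shows that, given such a family, $D_k(\chi)$ being left IP$^{\ast}$ for all $\chi$ self-improves via syndetic/thick sets, and the color-focusing argument (\cref{color_focusing}, \cref{thm_7.2.2}) then runs an induction on the staircase length $k$ with base case the amenable $m=2$ theorem (\cref{Main_VdW-2}, itself proved from the IP-Poincar\'e recurrence \cref{thm_6.2} and Furstenberg correspondence, not from the abelian case). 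The FC hypothesis enters at exactly one point, \cref{FC-are VDW groups}: in an FC-group left syndetic and right syndetic sets coincide, so for a left piecewise syndetic $A\subseteq G^m$ and $p\in K(\beta(G^m),\bullet)\cap\overline{A}$ the return set $L_A=\{x: Ax^{-1}\in p\}$ is right syndetic, which gives the almost van der Waerden property for the family of left piecewise syndetic sets. If you want to salvage your quotient idea, you would have to explain how to handle colorings that are not $N$-invariant with $N$ infinite; absent that, the paper's syndeticity mechanism is the substantive content that your sketch is missing.
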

The class of FC-groups contains the class of abelian groups, but forms a slightly larger family, including, for example, $\mathbb{Z} \times Q_8$, where $Q_8$ is the quaternion group or the restricted direct product $\bigoplus_{n \in \mathbb{N}} \mathbf{Dih}_n$, where $\mathbf{Dih}_n$ is the dihedral group of order $2n$. In fact, the class of FC-groups is exactly the class of any central extension of torsion-free abelian groups by locally normal groups (see \cite{FC-group}).
In \cref{subsec:vdw group} we introduce the concept of van der Waerden group and we prove that \cref{van-der-waerden-holds VDW group} is true if $G$ is a right van der Waerden group. The result for FC-groups is a special case, indeed thanks to \cref{FC-are VDW groups} we show that FC-groups are van der Waerden groups.


\section{IP-van der Waerden Theorems}
\subsection{Stone-Čech compactification}
Given a nonempty set $G$, a filter on $G$ is a family $p$ of subsets of $G$ satisfying the following:  $\emptyset \not \in p$ and $G \in p$, $p$ is upward closed and $p$ is closed under finite intersection. Moreover, an ultrafilter is a filter which is maximal with respect to inclusion. For a discrete topological space $G$, let $\beta G= \{ p : p \text{ is an ultrafilter on } G \}$. Endowing $\beta G$ with the topology which has $\{ \overline{A} : A \subseteq G\}$ as basis, where $\overline{A}=\{p \in \beta G: A \in p \}$, we have that $\beta G$ is compact Hausdorff and moreover $\beta G$ is a Stone-Čech compactification (see \cite{idem} for more details). We can extend the group operations to $\bullet : \beta G \times \beta G \to \beta G$ making $\beta G$ a left topological compact semigroup. More precisely, for all $p,q \in \beta G$ we have $p \bullet q \in \beta G$ and $A \in p \bullet q $ if and only if $\{x \in G : Ax^{-1} \in p \} \in q$. It is also possible to extend the group operations to $\circ : \beta G \times \beta G \to \beta G$ making $\beta G$ a right topological compact semigroup. In this case $A \in p \circ q$ if and only if $\{ x \in G : x^{-1} A \in q\} \in p$. It is a standard result that a set $Y$ is a left IP set if and only if there exists an idempotent in $(\beta G, \bullet)$ containing $Y$ (see \cite[Theorem 16.4 on page 406]{idem}). Similarly, a set $X$ is a left IP$^{\ast}$ if it is contained in all idempotents (see \cite[Theorem 16.6 on page 406]{idem}). By \cite[Theorem 2.8, on page 42]{idem} we have that $(\beta G,\bullet)$ has a minimal two-sided ideal, denoted by $K(\beta G,\bullet)$. A set is said to be central, denoted by $\mathcal{C}$, if it is contained in some minimal idempotent, i.e. idempotent which is a member of $K(\beta G, \bullet)$. Similarly, a set is said to be central$^{\ast}$, denoted by $\mathcal{C}^{\ast}$, if it is contained in all minimal idempotents. We see that the concept of a central set is weaker than an IP set, since every central set is also an IP set, but not every IP set is a central set. Therefore an IP$^{\ast}$ set is more restrictive, indeed every IP$^{\ast}$ set is also a $\mathcal{C}^{\ast}$ set, but the converse is not true.

\subsection{IP-Poincar\'e Recurrence Theorem}
\label{sec:density version of van der waerden}
If $G$ is a countable group, a sequence $\Phi = (\Phi_n)_{n \in \mathbb{N}}$ of finite subsets of $G$ is said to be a right Følner sequence if for all $g \in G$ we have $\lim_{n \to \infty} \frac{\left| \Phi_n \cap \Phi_n g \right|}{\left| \Phi_n \right|} =1$. A group is right amenable if and only if there exists a right Følner sequence. From this, we can introduce the notions of right upper density of a set. Let $E \subseteq G$ then the right upper density of $E$ is defined to be  $\overline{d}_{\Phi}(E)=\limsup_{n \to \infty} \frac{\left|E \cap \Phi_n \right|}{\left| \Phi_n \right|}$. We notice that the right upper density is right invariant, meaning that $\overline{d}_{\Phi}(E)=\overline{d}_{\Phi}(Eg)$ for all $g \in G$. Moreover, a set is said to be a right positive upper density set if there exists a right Følner sequence $\Phi$ such that $\overline{d}_{\Phi}(E) >0$. In this Section, we prove \cref{thm_6.2} which is \cref{conjecture_1} for $k=1$. We also prove \cref{thm_6.4} which is the density version of \cref{thm_6.2}. 
We assume in all of \cref{sec:density version of van der waerden} that $G$ is a countable, discrete, and amenable group. Moreover, for an ultrafilter $p \in \beta G$ and an indexed sequence $(x_g)_{g \in G}$ in a topological space $X$, we say that $p-\lim_{g \in G} x_g=x$ if and only if for every neighborhood $x \in U$ we have that $\{ g \in G: x_g \in U \} \in p$. Let $(U_g)_{g \in G}$ be a unitary representation of $G$ on a separable Hilbert space $\mathcal{H}$, i.e. a function $g \to U_g$ sending $G$ to unitary operators on $\mathcal{H}$ such that $U_{gh}=U_g U_h$. By Banach-Alaoglu Theorem we have that the unit ball in $\mathcal{H}$ is compact, moreover is also metrizable since $\mathcal{H}$ is separable. Since the unit ball is compact and metrizable, then for any ultrafilter $p \in \beta G$, $p-\lim_g U_g f$ exists weakly for any $f \in \mathcal{H}$.
Henceforth, by $(X,\mathcal{A},\mu)$ we denote a Lebesgue probability space which is measurably isomorphic to a compact metric space equipped with a complete regular Borel measure. Since $(X,\mathcal{A},\mu)$ is a Lebesgue space, then $L^2(X,\mathcal{A},\mu)$ is a separable Hilbert space, hence its unit ball is compact and metrizable in the weak topology.  We also consider $T$ to be a $G$ anti-action which is measure-preserving, i.e. $\mu(T_g^{-1} A)=\mu(A)$ for all $g \in G$ and for all $A \in \mathcal{A}$. It is natural to consider $T$ to be an anti-action, in this way we work with a unitary representation which is an action in \cref{thm_6.1} and another action on $G^2$ in \cref{thm_6.4}.

\begin{theorem}[\texorpdfstring{See \cite[Theorem 2.4 on page 1258]{Central}}{}] \label{thm_6.1}
     Let $(U_g)_{g \in G}$ be a unitary representation of $G$ on a separable Hilbert space $\mathcal{H}$, suppose that $p \in (\beta G,\bullet)$ is an idempotent and for $f \in \mathcal{H}$, let $P f = p-\lim_g U_g f$ weakly. Then $P$ is the orthogonal projection onto a closed subspace of $\mathcal{H}$.
 \end{theorem}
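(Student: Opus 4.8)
The plan is to verify directly that $P$ enjoys the three properties characterizing an orthogonal projection onto a closed subspace: it is linear and bounded, it is a contraction, and it is idempotent; a contractive idempotent on a Hilbert space is automatically the orthogonal projection onto its (necessarily closed) range, so these suffice.

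First I would record that $P$ is well defined and linear. Since each $U_g$ is unitary, the net $(U_g f)_{g \in G}$ lies in the closed ball of radius $\|f\|$, which by Banach--Alaoglu together with separability of $\mathcal{H}$ is weakly compact and metrizable; hence the weak limit $Pf = p\text{-}\lim_g U_g f$ exists and is unique, and linearity of $P$ is inherited from linearity of the weak ultrafilter limit and of each $U_g$. For contractivity I would invoke weak lower semicontinuity of the norm: as $\|U_g f\| = \|f\|$ for every $g$, passing to the $p$-limit gives $\|Pf\| \le \|f\|$, so $\|P\| \le 1$.

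The heart of the argument is idempotency, $P^2 = P$, and this is where the hypothesis $p \bullet p = p$ enters. Each $U_h$ is bounded, hence continuous from the weak topology to itself, so it commutes with the weak $p$-limit: using the representation identity $U_h U_g = U_{hg}$, one gets $U_h(Pf) = U_h\, p\text{-}\lim_g U_g f = p\text{-}\lim_g U_h U_g f = p\text{-}\lim_g U_{hg} f$. Applying $P$ once more produces the iterated limit $P^2 f = p\text{-}\lim_h\, p\text{-}\lim_g U_{hg} f$. I would then apply the iterated-limit theorem for the compact left topological semigroup $(\beta G, \bullet)$, which collapses this double ultrafilter limit into the single limit over the product ultrafilter $p \bullet p$; since $p$ is idempotent, $p \bullet p = p$, and the right-hand side is precisely $Pf$, whence $P^2 f = Pf$.

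The main obstacle is exactly this idempotency step, and concretely the bookkeeping of composition order. Because $G$ need not be abelian, one must track the direction in which $U_h U_g = U_{hg}$ multiplies against the convention $A \in p \bullet q \iff \{x : A x^{-1} \in p\} \in q$ defining the operation, so that the iterated-limit identity assembles into $p \bullet p$ and not its opposite; getting this matching right for the Koopman representation of the anti-action is the only delicate point, the rest being formal. Once $P^2 = P$ and $\|P\| \le 1$ are established, I would conclude with the classical fact that a contractive (equivalently norm-one, when nonzero) idempotent $E$ on a Hilbert space is self-adjoint and equals the orthogonal projection onto $\operatorname{ran} E$; the range is closed, being $\ker(I - E)$, which completes the proof.
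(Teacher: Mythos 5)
Your overall skeleton (well-definedness and linearity, $\|P\|\leq 1$ via weak lower semicontinuity of the norm, then $P^2=P$, then the classical fact that a contractive idempotent on a Hilbert space is the orthogonal projection onto its closed range) is sound, but your idempotency computation collapses the \emph{wrong} iterated limit, and this is precisely the trap the paper flags in the discussion following the statement. With the paper's convention $A \in p\bullet q \iff \{x \in G : Ax^{-1}\in p\}\in q$, the iterated-limit identity in a compact Hausdorff space reads
\begin{align*}
(p\bullet q)\text{-}\lim_t x_t \;=\; q\text{-}\lim_s \, p\text{-}\lim_t \, x_{ts},
\end{align*}
so the \emph{outer} variable must sit on the \emph{right} of the product index. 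Your expansion $P^2 f = p\text{-}\lim_h\, p\text{-}\lim_g U_{hg} f$ (obtained by pushing $U_h$ through the inner limit) has the outer variable $h$ on the \emph{left}; that double limit is $(p\circ p)\text{-}\lim_t U_t f$, where $\circ$ is the other extension ($A\in p\circ q \iff \{x : x^{-1}A \in q\}\in p$), and the hypothesis $p\bullet p = p$ gives no control whatsoever over $p\circ p$ for nonabelian $G$. This is exactly what the paper states: ``This argument fails with $P$ since $U$ is an action.'' So the step you describe as ``the only delicate point, the rest being formal'' is not a matching that can be gotten right by bookkeeping --- as set up, it genuinely fails.

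There are two repairs. The paper's route passes to the adjoint: $P^{\ast}f = p^{-1}\text{-}\lim_g U_g^{\ast} f$, where $p^{-1}=\{A^{-1}: A\in p\}$ is an idempotent of $(\beta G,\circ)$ and $g \mapsto U_g^{\ast}$ is an \emph{anti}-action ($U_{gh}^{\ast}=U_h^{\ast}U_g^{\ast}$); there the composition order matches the convention, giving $P^{\ast}P^{\ast}=P^{\ast}$, and then your own contractive-idempotent fact shows $P^{\ast}$ is an orthogonal projection, hence self-adjoint, hence $P=(P^{\ast})^{\ast}=P^{\ast}$ is the desired projection. Alternatively, you can stay with $P$ but expand $P^2$ the other way: having already established that $P$ is bounded and linear, it is weak-to-weak continuous, and on the weakly compact ball ultrafilter limits commute with continuous maps, so $P^2 f = P\bigl(p\text{-}\lim_h U_h f\bigr) = p\text{-}\lim_h P(U_h f) = p\text{-}\lim_h\, p\text{-}\lim_g U_{gh} f$; now the outer variable sits on the right, and this does collapse to $(p\bullet p)\text{-}\lim_t U_t f = Pf$. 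Either way, one non-formal move (the adjoint detour, or pushing the \emph{outer} $P$ rather than the inner $U_h$ through the limit) is needed, and your write-up supplies neither.
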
 

 Note that in the original version of \cref{thm_6.1}, V. Bergelson and R. McCutcheon assume $p$ to be a minimal idempotent to conclude that for all $g \in G$ and all $f \in \mathcal{H}$ we have $P U_g f = U_g P f$. They require this because they use two commuting anti-actions. Given that we only have one anti-action $T$, we can drop this assumption since we do not need the orthogonal projection $P$ defined in \cref{thm_6.1} to commute with the unitary representation of the Hilbert space $\mathcal{H}$. It is only important that $P$ is an orthogonal projection. We can conclude this even if the ultrafilter is an arbitrary idempotent. From this, we can upgrade the conclusion from left $\mathcal{C}^{\ast}$ to left IP$^{\ast}$ and obtain \cref{thm_6.2}. In the proof of \cref{thm_6.1} at some point there is the need to prove that $P=P^2$. I would like to thank E. Ackelsberg to help me notice the following interesting fact. Let $p \in (\beta G,\bullet)$ be an idempotent, let $p^{-1} =\{ A^{-1} : A \in p\}$. Note that $p^{-1} \in (\beta G, \circ)$. If we consider $P^{\ast}$ to be the adjoint operator of $P$, that is $P^{\ast}f=p^{-1}- \lim_g U_g^{\ast} f$. Then is easier to get $P^{\ast}=P^{\ast} P^{\ast}$. Indeed 
\begin{align*}
    P^{\ast} P^{\ast} f &= p^{-1}-\lim_{h} \left( p^{-1}-\lim_g U_g^{\ast} U_h^{\ast} f \right)\\
    &=p^{-1}-\lim_{h} \left( p^{-1}-\lim_g U_{gh}^{\ast} f \right)\\
    &=P^{\ast} f.
\end{align*}
This argument fails with $P$ since $U$ is an action. Surprisingly, $P$ is self-adjoint, i.e. $P^{\ast}=P$. This follows from the fact that the orthogonal projection $P$ is idempotent and has operator norm bounded by $1$. Thanks to the this fact one can use a modified version of \cref{thm_6.1} to obtain in \cref{thm_6.2} also right IP$^{\ast}$.

 \begin{theorem}\label{thm_6.2}
    Let $G$ be a countable, discrete and amenable group. Let $T$ be a $G$-anti-action on $X$. For any $A \in \mathcal{A}$ with $\mu(A) >0$ we have that 
    \begin{align*}
        \{ g \in G : \mu(A \cap T_g^{-1} A) >0 \}
    \end{align*}
    is both right and left IP$^{\ast}$. 
 \end{theorem}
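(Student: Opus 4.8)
The plan is to prove the IP$^{\ast}$ statement for the single anti-action recurrence set by combining the spectral/idempotent machinery of \cref{thm_6.1} with the characterization of IP$^{\ast}$ sets as those contained in every idempotent of $(\beta G,\bullet)$. I would start by fixing an arbitrary idempotent $p\in(\beta G,\bullet)$; by the recalled theorem from \cite{idem} it suffices to show that $R:=\{g\in G:\mu(A\cap T_g^{-1}A)>0\}$ belongs to $p$, since a set lies in every idempotent precisely when it is left IP$^{\ast}$.

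The key step is to exploit the orthogonal projection $Pf=p\text{-}\lim_g U_g f$ supplied by \cref{thm_6.1}, where $(U_g)_{g\in G}$ is the Koopman-type unitary representation associated to the measure-preserving anti-action $T$ on $\mathcal{H}=L^2(X,\mathcal{A},\mu)$ (the fact that $T$ is an anti-action is exactly what makes $g\mapsto U_g$ an honest representation, as the excerpt notes). Apply $P$ to the indicator $f=\mathbbm{1}_A$. Since $P$ is an orthogonal projection and $\langle \mathbbm{1}_A,\mathbbm{1}_A\rangle=\mu(A)>0$, I would compute the inner product $\langle P\mathbbm{1}_A,\mathbbm{1}_A\rangle$ and show it is strictly positive: because $P$ is self-adjoint and idempotent, $\langle P\mathbbm{1}_A,\mathbbm{1}_A\rangle=\|P\mathbbm{1}_A\|^2$, and this is positive unless $\mathbbm{1}_A$ is orthogonal to the range of $P$; a short argument (using that the constant-ish or recurrent part captured by $P$ meets $A$) forces positivity, or more robustly one bounds it below by $\mu(A)^2$ via a convexity/Cauchy--Schwarz estimate on the weak limit. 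Unwinding the definition of $P$ as a $p$-limit, positivity of $\langle P\mathbbm{1}_A,\mathbbm{1}_A\rangle=p\text{-}\lim_g\langle U_g\mathbbm{1}_A,\mathbbm{1}_A\rangle=p\text{-}\lim_g\mu(A\cap T_g^{-1}A)$ means the set of $g$ for which $\mu(A\cap T_g^{-1}A)$ exceeds some fixed threshold $\lambda>0$ lies in $p$; in particular $R\in p$.

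The main obstacle is passing cleanly from the positivity of the weak $p$-limit to membership of $R$ in the ultrafilter $p$. The $p$-limit is a weak limit, so $\langle P\mathbbm{1}_A,\mathbbm{1}_A\rangle$ equals $p\text{-}\lim_g\mu(A\cap T_g^{-1}A)$ only after justifying that pairing against a fixed vector commutes with the weak $p$-limit; this is where I would be careful, using that $p$-limits of bounded scalar sequences exist and are computed neighborhood-by-neighborhood. Once the scalar $p$-limit is identified as a positive number $c>0$, the defining property of a $p$-limit gives that $\{g:\mu(A\cap T_g^{-1}A)>c/2\}\in p$, and since this set is contained in $R$ and $p$ is upward closed, $R\in p$. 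As $p$ was an arbitrary idempotent, $R$ is left IP$^{\ast}$.

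For the right IP$^{\ast}$ conclusion I would run the symmetric argument, invoking the observation recorded just before the statement: $P$ is self-adjoint, so $P=P^{\ast}=p^{-1}\text{-}\lim_g U_g^{\ast}$ with $p^{-1}\in(\beta G,\circ)$, and since right IP$^{\ast}$ sets are exactly those meeting every right IP set, equivalently those contained in every idempotent of $(\beta G,\circ)$, the same positivity computation carried out in $(\beta G,\circ)$ with the adjoint representation shows $R$ belongs to every such idempotent. Here I would lean on the self-adjointness of $P$ (justified in the excerpt by $P$ being an idempotent operator of norm at most $1$) to avoid redoing the projection analysis from scratch.
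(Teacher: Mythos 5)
Your proposal is correct, and its left IP$^{\ast}$ half is essentially identical to the paper's proof: fix an arbitrary idempotent $p\in(\beta G,\bullet)$, take the projection $P\mathbf{1}_A=p\text{-}\lim_g T_g\mathbf{1}_A$ from \cref{thm_6.1}, and conclude via $\langle P\mathbf{1}_A,\mathbf{1}_A\rangle=\|P\mathbf{1}_A\|^2\geq\left(\int P\mathbf{1}_A\,d\mu\right)^2=\mu(A)^2>0$. One local caution: of your two suggested positivity arguments, only the Cauchy--Schwarz bound is an actual proof (pairing the weak limit with the constant function $1$ and using measure preservation gives $\int P\mathbf{1}_A\,d\mu=\mu(A)$); the alternative ``positive unless $\mathbf{1}_A$ is orthogonal to the range of $P$'' has no content, since ruling out that orthogonality is exactly what needs proving. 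Your worry about interchanging the pairing with the $p$-limit is a non-issue: weak $p$-convergence means precisely that $\langle U_g\mathbf{1}_A,\mathbf{1}_A\rangle$ $p$-converges to $\langle P\mathbf{1}_A,\mathbf{1}_A\rangle$.

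Where you genuinely diverge is the right IP$^{\ast}$ half. The paper does not use $(\beta G,\circ)$ there at all; it gives an elementary pigeonhole (Poincar\'e) argument: for any sequence $(g_n)$, the sets $T_{g_1\cdots g_k}^{-1}A$ all have measure $\mu(A)$, so two of them overlap, and the anti-action identity $T_{gh}^{-1}A=T_g^{-1}T_h^{-1}A$ converts the overlap into $\mu(A\cap T_{g_{i+1}\cdots g_j}^{-1}A)>0$ with $g_{i+1}\cdots g_j\in\mathbf{FP}_R(g_n)$. Your route --- working with idempotents of $(\beta G,\circ)$ via the self-adjointness of $P$ --- is exactly the ``modified version of \cref{thm_6.1}'' that the paper flags as possible in the remark preceding the theorem, and it does go through; to complete it you should record that inversion is an anti-isomorphism from $(\beta G,\bullet)$ to $(\beta G,\circ)$, so every $\circ$-idempotent has the form $p^{-1}$ with $p\in(\beta G,\bullet)$ idempotent, and that (since $T_{g^{-1}}=T_g^{-1}$ and $T$ is measure-preserving) $\mu(A\cap T_{g^{-1}}^{-1}A)=\mu(A\cap T_g^{-1}A)$, whence the $q$-limit along $q=p^{-1}$ equals $\langle P\mathbf{1}_A,\mathbf{1}_A\rangle\geq\mu(A)^2$. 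Comparing the two: the paper's pigeonhole is shorter, needs no new operator-theoretic input, and actually yields the stronger conclusion (noted right after the proof) that the recurrence set is right $\Delta^{\ast}$; your route is more uniform with the left half and gives the quantitative lower bound $\mu(A)^2$ along every $\circ$-idempotent, at the cost of justifying the adjoint/self-adjointness step.
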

 \begin{proof}
     Take any arbitrary idempotent $p \in (\beta G,\bullet)$. Then the anti-action $T$ gives rise to a unitary $G$-action on $L^2(X,\mathcal{A},\mu)$ by $T_g f(x) = f(T_g x)$. Let $Pf=p-\lim_g T_g f$ be the corresponding orthogonal projection, then we have that
     \begin{align*}
         p-\lim_g \mu(A \cap T_{g}^{-1} A) &= \int \mathbf{1}_A P \mathbf{1}_A d \mu \\
         &= \int ( P \mathbf{1}_A)^2 d \mu \\
         & \geq \left( \int P \mathbf{1}_A d \mu \right)^2 = \mu^2(A) >0 
      \end{align*} 
    that is, we have $\{ g \in G : \mu(A \cap T_g^{-1} A) >0 \} \in p$ and since $p $ was an arbitrary idempotent ultrafilter we deduce that $ \{ g \in G : \mu(A \cap T_g^{-1} A) >0 \}$ is a left IP$^{\ast}$ set. Recall that $ T$ is an anti-action, hence $T_{gh} x = T_{h} T_{g} x$. Moreover $T_{gh}^{-1} A = T_{g}^{-1} T_{h}^{-1} A$. Let $(g_n)_{n \in \mathbb{N}}$ be any sequence in $G$. Since $T$ is measure-preserving for all $k \in \mathbb{N} $ we have that $\mu(A)= \mu(T_{g_1 \ldots g_{k-1} g_k}^{-1} A) $. Choosing $k$ sufficiently large, by pigeonhole principle we can find $1 \leq i<j\leq k$ such that  
    \begin{align*}
        \mu(T_{g_1 g_2 \ldots  g_i}^{-1} A \cap T_{g_1 g_2 \ldots g_j}^{-1} A) >0.
    \end{align*} 
    Thus we get
    \begin{align*}
        \mu(T_{g_1 g_2 \ldots  g_i}^{-1} A \cap T_{g_1 g_2 \ldots g_j}^{-1} A) 
        &=\mu( A \cap T_{g_{i+1} \ldots g_j}^{-1}  A) >0 
    \end{align*}
    and $g_{i+1} \cdot \ldots \cdot g_j \in \mathbf{FP}_R(g_n)$. Hence
     \begin{align*}
        \{ g \in G : \mu(A \cap T_g^{-1} A) >0 \}
    \end{align*}
    is a right IP$^{\ast}$ set.
 \end{proof}

 I would like to thank V. Bergelson for helping me notice that the conclusion of \cref{thm_6.2} can be upgraded to right $\Delta^{\ast}$, that is the set $ \{ g \in G : \mu(A \cap T_g^{-1} A) >0 \}$ intersects every set of differences $B^{-1}B$ for $B \subseteq G$. Take $g_1,\ldots,g_k \in B$ with $k$ sufficiently large. There exists $1 \leq i < j \leq k$ such that $\mu(T_{g_i}^{-1} A \cap T_{g_j}^{-1} A) >0$. Hence $\mu( A \cap T_{g_i^{-1} g_j}^{-1} A ) >0$. \newline
\cref{thm_6.4} is essentially the same as \cite[Theorem 4.2, on page 1268]{Central}, the only thing that changes is that we use \cref{thm_6.2} to have that $\{ g: \overline{d}_{\Phi}(E \cap E(g^{-1},e_G)) >0 \}$ is an IP$^{\ast}$ set. V. Bergelson and R. McCutcheon use \cref{Bergelson_McCutcheon_density} to conclude that $\{ g: \overline{d}_{\Phi}(E \cap E(g^{-1},e_G) \cap E(g^{-1},g^{-1}) >0 \}$ is a $\mathcal{C}^{\ast}$ set.
\begin{theorem}\label{thm_6.4}
     Let $G$ be a countable amenable group with identity $e_G$ and suppose $E \subseteq G \times G$ has a positive upper density with respect to some right Følner sequence $\Phi= (\Phi_n)_{n \in \mathbb{N}}$. Then with respect to $(\Phi_n)_{n \in \mathbb{N}}$, we have that 
    \begin{align*}
        \{ g \in G: \overline{d}_{\Phi}(E \cap E(g^{-1},e_G)) > 0 \}
    \end{align*}
    is both right and left IP$^{\ast}$.
\end{theorem}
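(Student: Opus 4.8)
The plan is to deduce \cref{thm_6.4} from \cref{thm_6.2} by a Furstenberg-type correspondence principle, exactly in the spirit of the cited \cite[Theorem 4.2]{Central} but feeding the recurrence statement of \cref{thm_6.2} (which already yields both right and left IP$^{\ast}$) into the density-to-dynamics transfer. Concretely, I would work on the product right Følner sequence $(\Phi_n\times\Phi_n)_{n\in\mathbb{N}}$ of $G\times G$, pass to a subsequence along which $\tfrac{|E\cap(\Phi_n\times\Phi_n)|}{|\Phi_n|^2}\to\overline{d}_\Phi(E)>0$, and form the empirical measures $\tfrac{1}{|\Phi_n|^2}\sum_{\gamma\in\Phi_n\times\Phi_n}\delta_{\sigma_\gamma\mathbf{1}_E}$ on the compact metrizable shift space $\{0,1\}^{G\times G}$, where $\sigma_\gamma$ denotes the shift by $\gamma\in G\times G$. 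A weak-$\ast$ cluster point yields a Lebesgue probability space $(X,\mathcal{A},\mu)$ and the clopen cylinder $A=\{\omega:\omega(e_G,e_G)=1\}$ with $\mu(A)=\overline{d}_\Phi(E)$. The only motion I need acts on the first coordinate, and I would realize it as a measure-preserving $G$-\emph{anti}-action $T$ (so that $T_{gh}=T_hT_g$), arranged so that $T_g^{-1}A$ is the cylinder recording the value at $(g^{-1},e_G)$; the point of using an anti-action, rather than an action, is precisely to match the \emph{right} translate $E(g^{-1},e_G)$ together with the right-invariance supplied by the right Følner sequence.

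Granting this set-up, the heart of the matter is the single inequality
\[
  \overline{d}_\Phi\bigl(E\cap E(g^{-1},e_G)\bigr)\ \geq\ \mu\bigl(A\cap T_g^{-1}A\bigr)\qquad\text{for all }g\in G,
\]
which holds because $\tfrac{1}{|\Phi_n|^2}\sum_{\gamma\in\Phi_n\times\Phi_n}\mathbf{1}_A(\sigma_\gamma\mathbf{1}_E)\,\mathbf{1}_A(T_g\sigma_\gamma\mathbf{1}_E)$ counts exactly the $(x,y)\in\Phi_n\times\Phi_n$ with $(x,y)\in E$ and $(xg,y)\in E$, i.e.\ the density of $E\cap E(g^{-1},e_G)$, while its subsequential limit equals $\mu(A\cap T_g^{-1}A)$ since the relevant set is clopen. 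Once this is in place the endgame is immediate: applying \cref{thm_6.2} to the system $(X,\mathcal{A},\mu,T)$ and the set $A$ (with $\mu(A)>0$) shows that $\{g:\mu(A\cap T_g^{-1}A)>0\}$ is both right and left IP$^{\ast}$; by the displayed inequality this set is contained in $\{g:\overline{d}_\Phi(E\cap E(g^{-1},e_G))>0\}$, and since any superset of a left (resp.\ right) IP$^{\ast}$ set is again left (resp.\ right) IP$^{\ast}$ — it meets every IP set that the smaller set already meets — the target set is both right and left IP$^{\ast}$.

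I expect the correspondence principle itself to be the only real obstacle, and within it the \emph{handedness} bookkeeping: because $G$ is non-abelian one must keep track of the fact that $\overline{d}_\Phi$ is right-invariant and that $E(g^{-1},e_G)$ is a right translate, which is exactly what forces the first-coordinate shift to be an anti-action and forces $T$-invariance of $\mu$ to be extracted from the right (not left) Følner property. A clean way to sidestep the concrete probability space, and in particular the verification that $\mu$ is genuinely $T$-invariant, is to run the computation of \cref{thm_6.2} directly in the GNS Hilbert space $L^2(G\times G,\mathfrak{M})$ of a right-invariant mean $\mathfrak{M}$ associated to $\Phi$, equipped with the unitary $G$-action $U_g f(a,b)=f(ag,b)$ (unitary precisely by right-invariance of $\mathfrak{M}$, and an honest action $U_gU_h=U_{gh}$ as required by \cref{thm_6.1}). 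For any idempotent $p\in(\beta G,\bullet)$, \cref{thm_6.1} gives the orthogonal projection $Pf=p\text{-}\lim_g U_g f$, and since the constant function $\mathbf{1}$ is $U$-fixed one has $P\mathbf{1}=\mathbf{1}$, whence
\[
  p\text{-}\lim_g\langle\mathbf{1}_E,U_g\mathbf{1}_E\rangle=\langle\mathbf{1}_E,P\mathbf{1}_E\rangle=\lVert P\mathbf{1}_E\rVert^2\geq\langle\mathbf{1},P\mathbf{1}_E\rangle^2=\langle\mathbf{1},\mathbf{1}_E\rangle^2=\overline{d}_\Phi(E)^2>0,
\]
using $P=P^{\ast}=P^2$ and $\lVert\mathbf{1}\rVert=1$. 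Since $\langle\mathbf{1}_E,U_g\mathbf{1}_E\rangle=\mathfrak{M}(\mathbf{1}_{E\cap E(g^{-1},e_G)})$, this places $\{g:\overline{d}_\Phi(E\cap E(g^{-1},e_G))>0\}$ in every $\bullet$-idempotent, giving the left IP$^{\ast}$ conclusion.

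For the right IP$^{\ast}$ conclusion I would reproduce the finite-product/pigeonhole argument from the end of the proof of \cref{thm_6.2}. Given any sequence $(g_n)_{n\in\mathbb{N}}$ in $G$, the vectors $U_{g_1\cdots g_k}\mathbf{1}_E$ are the indicators (in $L^2(G\times G,\mathfrak{M})$) of the right translates $E((g_1\cdots g_k)^{-1},e_G)$, each of mean $\overline{d}_\Phi(E)\geq c>0$ by right-invariance of $\mathfrak{M}$; since $\mathfrak{M}$ is a finitely additive probability mean, once $k$ exceeds $1/c$ these sets cannot be pairwise disjoint, so there exist $1\leq i<j$ with $\langle U_{g_1\cdots g_i}\mathbf{1}_E,U_{g_1\cdots g_j}\mathbf{1}_E\rangle>0$. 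As $U$ is a unitary action, this inner product equals $\langle\mathbf{1}_E,U_{g_{i+1}\cdots g_j}\mathbf{1}_E\rangle$, and $g_{i+1}\cdots g_j\in\mathbf{FP}_R(g_n)$; hence the target set meets every right finite product set, i.e.\ it is right IP$^{\ast}$. This completes the plan, the one genuinely load-bearing step being the correctly-handed correspondence encapsulated in the displayed inequality (equivalently, the GNS realization above).
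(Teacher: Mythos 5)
Your proposal is correct and follows essentially the same route as the paper: the paper likewise proves \cref{thm_6.4} by feeding the Furstenberg correspondence principle (cited from Bergelson--McCutcheon, realized via the anti-action $U_{(g,h)}\xi(a,b)=\xi(ga,hb)$ on $\{0,1\}^{G\times G}$, the cylinder $A=\{\eta : \eta(e_G,e_G)=1\}$, and $T_g:=U_{(g,e_G)}$) into \cref{thm_6.2}, using exactly your key inequality $\mu(A\cap T_g^{-1}A)\leq \overline{d}_{\Phi}\bigl(E\cap E(g^{-1},e_G)\bigr)$ and the fact that a superset of a right/left IP$^{\ast}$ set is again right/left IP$^{\ast}$. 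Your auxiliary GNS-mean computation is a correct, essentially equivalent repackaging of the same argument rather than a different method; the only cosmetic caveat is that the Følner sequence $\Phi$ in the statement lives in $G\times G$ and need not be of product form $\Phi_n\times\Phi_n$, but nothing in your construction uses the product structure, so it goes through verbatim for a general right Følner sequence of $G\times G$.
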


\begin{proof}
    To prove the result we apply Furstenberg correspondence principle (see \cite[Proposition 4.1 on page 1267]{Central}). Let $\mathcal{L}_{(g,h)}$ be the action on $G^2$ defined by left multiplication by $(g,h) \in G^2$, that is \begin{align*}
        \mathcal{L}_{(g,h)} (a,b) = (ga,hb).
    \end{align*} 
    Let $ \Omega = \{0,1\}^{G \times G}$, define the following anti-action on $\Omega$ by \begin{align*}
        U_{(g,h)} \xi(a,b) &= \xi \circ \mathcal{L}_{(g,h)}(a,b)=\xi(ga,hb),
    \end{align*} for $\xi \in \Omega$ and $(a,b) \in G^2$. Indeed we have that 
    \begin{align*}
        U_{(gg',hh')} \xi &= \xi \circ  \mathcal{L}_{(gg',hh')} \\
        &=U_{(g',h')} U_{(g,h)} \xi
    \end{align*}
    Let's take $ \xi = \mathbf{1}_E \in \Omega$, where $E \subseteq G^2$ is a set of positive upper density with respect to some right Følner sequence $\Phi= (\Phi_n)_{n \in \mathbb{N}}$.  Let's define $X = \overline{ \{ U_{(g,h)} \xi : (g,h) \in G^2 \} }$. Moreover, we define the following set $A = \{ \eta \in X : \eta(e_G,e_G)=1\} \subseteq X$. According to Furstenberg correspondence principle we have that there exists a $\{U_{(g,h)}\}-$invariant measure $\mu$ on $X$ with $\mu(A) = \overline{d}_{\Phi}(E) >0$ such that for every $(g_1,h_1),\ldots, (g_k,h_k) \in G^2$ we have that
    \begin{align*}
        \mu( U_{(g_1,h_1)}^{-1} A \cap \ldots \cap U_{(g_k,h_k)}^{-1} A ) \leq \overline{d}_{\Phi}(E(g_1,h_1)^{-1} \cap \ldots \cap E(g_k,h_k)^{-1}).
    \end{align*}
    Now let $T_g:= U_{(g,e_G)}$ for every $g \in G$. We have that $T$ is a measure-preserving anti-action.
    It follows from \cref{thm_6.2} that
    \begin{align*}
       \{ g \in G : \mu(A \cap T_g^{-1} A)>0 \}
    \end{align*}
    
    is both right and left IP$^{\ast}$. 
    We notice that
    \begin{align*}
        \mu(A \cap T_g^{-1} A) = \mu(A \cap U_{(g,e_G)}^{-1} A ) \leq \overline{d}_{\Phi}(E \cap E(g^{-1},e_G) ).
    \end{align*}
    Hence we have that 
     \begin{align*}
       \{ g \in G : \mu(A \cap T_g^{-1} A)>0 \} \subseteq \{ g \in G: \overline{d}_{\Phi}(E \cap E(g^{-1},e_G)) > 0 \}
    \end{align*}
    and thus it is both a right and left IP$^{\ast}$ set.
\end{proof}  

\subsection{IP-van der Waerden Theorems for amenable groups and FC-groups}
\label{sec:van der waerden}

In \cref{subsec:vdw amenable dimension 2} we prove \cref{Main_VdW-2}. In \cref{subsec:vdw group} we prove \cref{van-der-waerden-holds VDW group}. Before we go into the details of these proofs, let us introduce some notations that will simplify and lighten the discussion and reading moving forward. Let's denote by $G^m$ the direct product of $G$ with itself $m$-times. Let $\mathcal{R}_g^{(j)}$ be the multiplication on the right by $g$ on the $j$-th component. Similarly by $\mathcal{L}_g^{(j)}$ let's denote the multiplication on the left by $g$ on the $j$-th component. Moreover, let $ G^m=\bigcup_{j=1}^{r}C_j$ be any finite pairwise disjoint partition of $G^m$. We say that the function $\chi : G^m \to \{1 ,\ldots , r\} $  defined by $x \mapsto \chi(x)=j$ if and only if $x \in C_j$ is a finite coloring of $G^m$.

\begin{definition}
Let $G$ be a group. Let $1 \leq j_1< j_2 < \ldots < j_k \leq m$ and let $g \in G$ and $x \in G^m$. We define the right corner sets by
\begin{align*}
    \mathfrak{R}_{x, g, (j_1,\ldots,j_k)}=\{ x , \mathcal{R}_g^{(j_1)}x, \mathcal{R}_g^{(j_1)}\mathcal{R}_g^{(j_2)}x,\ldots, \mathcal{R}_g^{(j_1)} \ldots \mathcal{R}_g^{(j_k)} x\}
\end{align*} 
\end{definition}
We define similarly the left corner sets, denoted by $\mathfrak{L}_{x, g, (j_1,\ldots,j_k)}$, by replacing each $\mathcal{R}^{(j)}$ with $\mathcal{L}^{(j)}$.

\begin{definition}
   Let $1\leq k \leq m$. Let $\chi : G^m \to \{1 ,\ldots , r\}$ be any finite coloring. Then
   \begin{enumerate}
       \item $D_k(\chi) = \begin{Bmatrix} g \in G: \text{exsits } 1 \leq j \leq r, \text{ and } x \in G^m \text{ such that } \mathfrak{R}_{x, g, (1,\ldots,k)} \subseteq C_j \end{Bmatrix}.$
       \item $\tilde{D}_k(\chi) = \begin{Bmatrix} g \in G: \text{exsits } 1 \leq j \leq r, \text{ and } x \in G^m \text{ such that } \mathfrak{R}_{x, g, (2,\ldots,k+1)} \subseteq C_j \end{Bmatrix}.$
   \end{enumerate}
\end{definition}

\cref{lemma_20.1} is useful to prove \cref{Main_VdW-2}, and it says that if $D_k(\chi)$ is IP$^{\ast}$ for every finite coloring $\chi $ then we can permute the coordinates and find that $\tilde{D}_k(\chi)$ is also IP$^{\ast}$ for every finite coloring.
\begin{lemma}\label{lemma_20.1}
    Let $1 \leq k < m$. Let $G$ be a discrete and countable group. We have that $D_k(\chi)$ is a left IP$^{\ast}$ set for all $\chi$ if and only if $\tilde{D}_k(\chi)$ is a left IP$^{\ast}$ set for all $ \chi $.
\end{lemma}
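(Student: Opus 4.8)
The plan is to exploit a coordinate symmetry: the configurations defining $D_k(\chi)$ and $\tilde{D}_k(\chi)$ differ only by relabelling which coordinates get shifted, so a single coordinate permutation should carry one family of corner sets onto the other. Both directions of the biconditional will then follow from the fact that left IP$^{\ast}$ sets are upward closed under inclusion.

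Concretely, since $k<m$ the coordinate $k+1$ exists, and I would introduce the cyclic permutation $\pi$ of $\{1,\ldots,m\}$ given by $\pi(i)=i+1$ for $1\leq i\leq k$, $\pi(k+1)=1$, and $\pi(i)=i$ for $i>k+1$. Let $P_\pi:G^m\to G^m$ be the induced coordinate-permutation map, $P_\pi(x)_i=x_{\pi^{-1}(i)}$. The first step is the bookkeeping identity $P_\pi\circ\mathcal{R}_g^{(j)}=\mathcal{R}_g^{(\pi(j))}\circ P_\pi$, which holds because $\mathcal{R}_g^{(j)}$ acts on a single coordinate while $P_\pi$ merely relabels coordinates; this is a direct coordinate-by-coordinate check. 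Iterating it along the nested products yields the key relation
\[
    P_\pi\bigl(\mathfrak{R}_{x,g,(1,\ldots,k)}\bigr)=\mathfrak{R}_{P_\pi x,\,g,\,(2,\ldots,k+1)},
\]
since $\pi$ sends the increasing tuple $(1,\ldots,k)$ to $(2,\ldots,k+1)$ while preserving order, so the intermediate partially-shifted points correspond exactly.

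With this in hand I would transport colorings. For the forward direction, assume $D_k(\chi)$ is left IP$^{\ast}$ for every finite coloring $\chi$, and fix an arbitrary coloring $\chi$; set $\chi':=\chi\circ P_\pi$. If $g\in D_k(\chi')$, witnessed by a base point $x$ and color $j$ with $\mathfrak{R}_{x,g,(1,\ldots,k)}\subseteq(\chi')^{-1}(j)$, then applying $P_\pi$ together with the key relation shows $\mathfrak{R}_{P_\pi x,g,(2,\ldots,k+1)}\subseteq\chi^{-1}(j)=C_j$, so $g\in\tilde{D}_k(\chi)$. Hence $D_k(\chi')\subseteq\tilde{D}_k(\chi)$. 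By hypothesis $D_k(\chi')$ is left IP$^{\ast}$, and since any superset of a left IP$^{\ast}$ set is again left IP$^{\ast}$ (such a set is contained in every idempotent of $(\beta G,\bullet)$, and ultrafilters are upward closed), we conclude $\tilde{D}_k(\chi)$ is left IP$^{\ast}$. The reverse direction is identical after replacing $\pi$ by $\pi^{-1}$: with $\chi'':=\chi\circ P_\pi^{-1}$ one obtains $\tilde{D}_k(\chi'')\subseteq D_k(\chi)$, and the same upward-closure argument finishes the proof.

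There is no serious obstacle here; the content lies entirely in setting up the coordinate permutation and checking the commutation identity. The only points needing a little care are (i) confirming that $\pi$ is genuinely a permutation of $\{1,\ldots,m\}$, which is precisely where the hypothesis $k<m$ is used, ensuring $k+1\leq m$ so that the cycle stays inside the coordinate set; and (ii) getting the direction of the conjugation $P_\pi\mathcal{R}_g^{(j)}=\mathcal{R}_g^{(\pi(j))}P_\pi$ right, together with the matching choice $\chi'=\chi\circ P_\pi$, so that the resulting inclusion lands in the correct color class.
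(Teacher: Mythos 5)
Your proof is correct and is essentially the paper's own argument: your $P_\pi$ is exactly the paper's coordinate-cycling map $\tau_k$ on the first $k+1$ coordinates, and transporting the coloring along it ($\chi'=\chi\circ P_\pi$ versus the paper's $\tilde{\chi}$ with classes $\tau_k(C_j)$) is the same device. The only cosmetic difference is that you conclude via the inclusion $D_k(\chi')\subseteq\tilde{D}_k(\chi)$ plus upward closure of left IP$^{\ast}$ sets, where the paper asserts the set equality $D_k(\tilde{\chi})=\tilde{D}_k(\chi)$ outright; your explicit commutation identity $P_\pi\circ\mathcal{R}_g^{(j)}=\mathcal{R}_g^{(\pi(j))}\circ P_\pi$ in fact makes the bookkeeping cleaner than the paper's.
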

\begin{proof}
    Assume that $D_k(\chi)$ is left IP$^{\ast}$ for all $\chi$. Consider any arbitrary finite coloring $\chi : G^m \to \{1, \ldots, r \}$ defined such that $\chi(x) = j$ if and only if $x \in C_j$. Let $\tau_k$ be the map that shifts the first $k+1$ components of $G^m$, more precisely let $\sigma_k=(1,k+1,k,k-1,\ldots,3,2)$, and define $\tau_{k} : G^m \to G^m$ as
    \begin{align*}
      g=  (g_j)_{1 \leq j \leq m} \mapsto \tau_{k}(g)= (g_{\sigma_{k}(j)})_{1 \leq j \leq m}
    \end{align*}
    note that by defining $\sigma_k^{-1} = (1, 2, 3, \ldots, k, k+1)$, we have the inverse map $\tau_{k}^{-1} : G^m \to G^m$ defined as
    \begin{align*}
        g=  (g_j)_{1 \leq j \leq m} \mapsto \tau_{k}^{-1}(g)= (g_{\sigma_{k}^{-1}(j)})_{1 \leq j \leq m}.
    \end{align*}
    Define the finite coloring $\tilde{\chi}: G^m \to \{1, \ldots, r \}$, where $\tilde{\chi}(x)=j$ if and only if $x \in \tau_{k}(C_j)$. Since $\tilde{\chi}$ is a finite coloring we have that $D_k(\tilde{\chi})$ is left IP$^{\ast}$. We deduce then 
    \begin{align*}
        D_k(\tilde{\chi}) & = \begin{Bmatrix}    
        g \in G:  \text{exsits } 1 \leq j \leq r, \text{ and } \tau_{k}(x) \in G^m \text{ such that } \mathfrak{R}_{\tau_{k}(x), g, (1,\ldots,k)} \subseteq \tau_{k}(C_j) \end{Bmatrix}  \\
    & = \begin{Bmatrix} 
        g \in G:  \text{exsits } 1 \leq j \leq r, \text{ and } x \in G^m \text{ such that } \mathfrak{R}_{x, g, (2,\ldots,k+1)} \subseteq C_j \end{Bmatrix}  \\
        &= \tilde{D}_k(\chi).
    \end{align*}
   The other direction is done similarly.
\end{proof}

With the introduced notation, we can rewrite \cref{conjecture_2} as:
\begin{conjecture}
    Let $G$ be a countable, discrete, and amenable group and $m \in \mathbb{N}$ arbitrary. Let $\chi : G^m \to \{1 ,\ldots , r\}$ be any given finite coloring. Then $D_m(\chi)$ is a left IP$^{\ast}$ set.
\end{conjecture}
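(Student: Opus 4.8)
The plan is to follow the route indicated in the introduction: deduce \cref{conjecture_2} for all $m$ from \cref{conjecture_1} for all $k$ via a Furstenberg correspondence principle on $G^m$, and then to attack the recurrence statement by an ultrafilter argument that carries a single \emph{arbitrary} idempotent throughout, so that the output is left IP$^{\ast}$ rather than merely $\mathcal C^{\ast}$. The point is that a positive correlation produced \emph{for every} idempotent of $(\beta G,\bullet)$ is exactly what the definition of left IP$^{\ast}$ demands.

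First I would upgrade the correspondence of \cref{thm_6.4} from $G^2$ to $G^m$. Fix a finite coloring $\chi$; some class $C_j$ has positive upper density along a right Følner sequence $\Phi$. Put $\Omega=\{0,1\}^{G^m}$, $\xi=\mathbf{1}_{C_j}$, let $G$ act by the commuting measure-preserving anti-actions $T^{(i)}_g=U_{(e,\ldots,g,\ldots,e)}$ (the letter $g$ in the $i$-th slot), and form $X=\overline{\{U_{(g_1,\ldots,g_m)}\xi\}}$ together with $A=\{\eta\in X:\eta(e_G,\ldots,e_G)=1\}$. Exactly as in \cref{thm_6.4} one obtains an invariant measure $\mu$ with $\mu(A)=\overline{d}_{\Phi}(C_j)>0$ for which positivity of the correlation
\begin{align*}
\mu\bigl(A\cap (T^{(1)}_g)^{-1}A\cap (T^{(1)}_g T^{(2)}_g)^{-1}A\cap\cdots\cap (T^{(1)}_g\cdots T^{(m)}_g)^{-1}A\bigr)
\end{align*}
forces positive upper density of the corner configuration, and in particular a monochromatic right corner $\mathfrak{R}_{x,g,(1,\ldots,m)}\subseteq C_j$. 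Hence it suffices to show that $\{g:\text{the above correlation}>\lambda\}$ lies in every idempotent $p\in(\beta G,\bullet)$, which is precisely \cref{conjecture_1} for $k=m$ with the $T^{(i)}$ just defined.

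To prove the $k=m$ recurrence I would fix an arbitrary idempotent $p$ and show that $p\text{-}\lim_g$ of the $(m{+}1)$-fold correlation is bounded below by a constant depending only on $\mu(A)$ and $m$, not on $p$; a \emph{uniform} positive lower bound then places $\{g:\text{correlation}>\lambda\}$ in $p$ for every $p$, i.e.\ makes it left IP$^{\ast}$. The engine is an IP van der Corput lemma applied to the multilinear average $\int \mathbf{1}_A\cdot\prod_{j=1}^{m}\bigl((T^{(1)}_g\cdots T^{(j)}_g)\mathbf{1}_A\bigr)\,d\mu$: one pass reduces it to correlations of strictly lower complexity by passing to the derived IP-system, and iterating this van der Corput/PET induction terminates at a single self-adjoint idempotent $P=p\text{-}\lim_g U_g$ as in \cref{thm_6.1}, whose positivity $\int (P\mathbf{1}_A)^2\geq\mu(A)^2$ from the proof of \cref{thm_6.2} seeds the lower bound. \cref{lemma_20.1} remains available should one instead prefer the shorter $k=m-1$ variant powered by a color-focusing step, which by permuting coordinates builds the top corner from the others.

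The main obstacle is the van der Corput/PET reduction over a \emph{non-minimal} idempotent once $k\geq 2$. As the remark after \cref{thm_6.1} records, for two or more commuting anti-actions the inductive step needs the limit projection to commute with the representation, $PU_gf=U_gPf$, and that identity is known only when $p$ is a minimal idempotent---which is exactly what pins the Bergelson--McCutcheon conclusion at $\mathcal C^{\ast}$. For $k=1$ the paper bypasses this because only self-adjointness of $P$, not commutation, enters; the crux of the general case is to find the analogous self-adjointness-based substitute for $k\geq 2$ so that the complexity-lowering step survives for an arbitrary idempotent. Absent such a substitute the argument degrades to $\mathcal C^{\ast}$ for $m\geq 3$, which is precisely why \cref{van-der-waerden-holds VDW group} instead settles arbitrary $m$ through a separate combinatorial route (van der Waerden groups) that exploits finiteness of conjugacy classes rather than the ergodic machinery above.
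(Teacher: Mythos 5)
Be aware first of what the target statement is: in the paper it is a \emph{conjecture}, not a theorem. The paper proves it only for $m=2$ (\cref{Main_VdW-2}) and, for arbitrary $m$, only under the extra hypothesis that $G$ is an FC-group (\cref{van-der-waerden-holds VDW group}); for general amenable $G$ and $m\geq 3$ it is explicitly left open. Your proposal does not close that gap, and your own final paragraph concedes the reason: the engine of your argument --- an IP van der Corput/PET induction carried out along an \emph{arbitrary} idempotent $p\in(\beta G,\bullet)$ --- needs the limit projection $P=p\text{-}\lim_g U_g$ to commute with the representation, $PU_gf=U_gPf$, and this commutation is available only for \emph{minimal} idempotents. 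The paper's $k=1$ argument (\cref{thm_6.1}, \cref{thm_6.2}) survives for arbitrary idempotents precisely because it needs no commutation, only that $P$ is an orthogonal projection; for $k\geq 2$ you invoke a ``self-adjointness-based substitute'' without producing one, so the complexity-lowering step is an unproven assertion and what remains is a restatement of \cref{conjecture_1}, not a proof of it. Even your fallback claim that the argument ``degrades to $\mathcal{C}^{\ast}$'' overreaches: the Bergelson--McCutcheon $\mathcal{C}^{\ast}$ density theorem (\cref{Bergelson_McCutcheon_density}) covers two commuting anti-actions, i.e.\ $m\leq 3$; for $m\geq 4$ even the $\mathcal{C}^{\ast}$ version is conjectural.

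Two further points of comparison with what the paper actually does. First, the paper's route for $m=2$ deliberately avoids all higher-order recurrence: it feeds only the single-transformation theorem \cref{thm_6.2}, through the correspondence \cref{thm_6.4} and \cref{thm_7.1.5}, into the combinatorial color-focusing machine \cref{thm_7.1.3}/\cref{thm_7.1.4}, where a pigeonhole over $r+1$ nested positive-density sets manufactures the three-term corner; your primary plan of proving \cref{conjecture_1} for $k=m$ directly attacks the hardest possible version of the problem. Second, your parenthetical appeal to ``the shorter $k=m-1$ variant powered by a color-focusing step'' does not rescue $m\geq 3$ either, because the hypothesis of \cref{thm_7.1.4} at level $k$ is exactly the existence of positive-density sets $B(g)$ carrying corners of length $k-1$, which for $k-1\geq 2$ is again the open density statement --- the paper says so explicitly (``we have derived \cref{thm_6.4} only for $G^2$''). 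The only mechanism by which the paper reaches arbitrary $m$ is to abandon density in favor of the van der Waerden family framework of \cref{thm_7.2.1}, which permits induction on $k$ but whose verification (\cref{FC-are VDW groups}) uses that left and right syndeticity coincide in FC-groups, a property that fails already in nilpotent groups. As written, your proposal establishes nothing beyond the paper's $m=2$ case.
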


\subsection{IP-van der Waerden Theorem for amenable groups}
\label{subsec:vdw amenable dimension 2}

 An immediate consequence of \cref{thm_6.4} is that for any finite coloring $\chi$ of $G^2$, the set
 \begin{align*}
     D_1=\{ g \in G : \text{ such that exists } (a,b) \in G^2 \text{ such that } \{ (a,b), (ag,b) \} \text{ is } \chi-\text{monochormatic}\}
 \end{align*}
 is left IP$^{\ast}$. Indeed given any finite coloring of $G^2$, since the family of positive upper-density set is partition regular, it follows that one of the colors has a positive-upper density. We notice that \cref{thm_6.4} is stronger than just $D_1$ or $D_1(\chi)$ to be a left IP$^{\ast}$ set. For each set of positive upper density $A$ we can find a set of positive upper density $B(g)=A \cap A(g^{-1},e_G)$, with $g$ that can be chosen in an IP$^{\ast}$ set $D$. We can think of $D$ as the set $\{g: \overline{d}_{\Phi}(A \cap A(g^{-1},e_G)) >0 \}$ given in \cref{thm_6.4}. In other words for each IP set $X$ we can find an IP set $Y = D \cap X \subseteq X$ such that for each $g \in Y$ we have a set of positive upper density $B(g)$ such that for each $(a,b) \in B(g)$ we have that $\{ (a,b),(ag,b) \} \subseteq A$. By swapping the coordinates and translating $B(g)$ by $(g^{-1},e_G)$, we may assume that there exists a set of positive upper density $\tilde{B}(g)$ such that for each $(a,b) \in \tilde{B}(g)$ we have that $\{ (ag,b),(ag,bg) \} \subseteq A$.
 Given this we can derive \cref{thm_7.1.3}, which is a color-focusing lemma. More precisely, for any $r$ we can iteratively focus ourselves on sets $A_0,A_1,\ldots, A_r$ such that each has a positive upper density. Moreover, every $A_j$ is $\chi$-monochromatic, possibly with different colors. Furthermore for each IP set $X$, we find a nested sequence of IP sets $Y_{r-1} \subseteq \ldots \subseteq Y_1 \subseteq Y_0 \subseteq X$ of a specific form, such that for each $0 \leq i < j \leq r$ there will be a shift $u_{j,i} \in Y_i$ with the following property: for each $(x,y) \in A_j$ we have that $\{ (x u_{j,i}, y),(x u_{j,i}, y u_{j,i} ) \} \subseteq A_i$. This color-focusing lemma allows us to focus on an arbitrary number of monochromatic sets $A_0,\ldots, A_r$ with the property just described. In \cref{thm_7.1.4}, for arbitrariness of the IP set $X$ initially considered, we deduce that $D_2(\chi)$ is left IP$^{\ast}$. Indeed we can find a $\chi$-monochromatic configuration of the form $\{ (x,y),(x u_{j,i}, y),(x u_{j,i}, y u_{j,i} ) \} $, with $u_{j,i}$ lying in an arbitrary IP set $X$. This argument is valid also in $G^m$ with $m \in \mathbb{N}$. What is no longer guaranteed when $m 
 >2$ is that for any $A$ of positive upper density, we can find a set of positive upper density $B(g)=A \cap A(g^{-1},e_G, \ldots , e_G) \cap \ldots \cap A(g^{-1}, g^{-1},\ldots, g^{-1}, e_G)$. Indeed we have derived \cref{thm_6.4} only for $G^2$.

\begin{lemma}\label{thm_7.1.2}
Let $G$ be an amenable group. Let $\chi : G^m \to \{1,\ldots r\}$ be any finite coloring and let $1\leq k-1 < m$. Let $X$ be any left IP set. If for all right positive upper density sets $A \subseteq G^m$ there exists a left IP set $Y=X \cap D \subseteq X$, where $D$ is some left IP$^{\ast}$, such that for all $g \in Y$ there exists a set of right positive upper density $B=B(g)$ such that for all $b \in B$ we have that
    \begin{align*}
        \mathfrak{R}_{B,g,(1,\ldots,k-1)} \subseteq A.
    \end{align*}
    Then we have that there exists a set of right positive upper density $\tilde{B}=\tilde{B}(g)$ such that for all $\tilde{b} \in B$
    \begin{align*}
        \{ \mathcal{R}_g^{(1)} \tilde{b}, \mathcal{R}_g^{(1)} \mathcal{R}_g^{(2)} \tilde{b}, \ldots, \mathcal{R}_g^{(1)}\ldots \mathcal{R}_g^{(k)} \tilde{b} \}  \subseteq A
    \end{align*}
\end{lemma}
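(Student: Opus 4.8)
The plan is to reduce the conclusion to a statement about corner sets on the \emph{shifted} index tuple $(2,\ldots,k)$, apply the hypothesis to a permuted copy of $A$, and then transport the resulting configurations back by the inverse permutation together with a single right translation. First I would rewrite the target configuration. Since multiplications on distinct coordinates commute, for any $\tilde b \in G^m$, writing $c := \mathcal{R}_g^{(1)}\tilde b$ one has
\begin{align*}
\{\mathcal{R}_g^{(1)}\tilde b,\ \mathcal{R}_g^{(1)}\mathcal{R}_g^{(2)}\tilde b,\ \ldots,\ \mathcal{R}_g^{(1)}\cdots\mathcal{R}_g^{(k)}\tilde b\} = \{c,\ \mathcal{R}_g^{(2)}c,\ \ldots,\ \mathcal{R}_g^{(2)}\cdots\mathcal{R}_g^{(k)}c\} = \mathfrak{R}_{c,g,(2,\ldots,k)}.
\end{align*}
Hence it suffices to produce a set $C$ of right positive upper density with $\mathfrak{R}_{c,g,(2,\ldots,k)}\subseteq A$ for every $c\in C$; then $\tilde B := \mathcal{R}_{g^{-1}}^{(1)}C$ has right positive upper density (right translation on a single coordinate preserves $\overline{d}_{\Phi}$, since the product Følner sets $\Phi_n\times\cdots\times\Phi_n$ inherit the Følner property coordinatewise) and does the job, because $\mathcal{R}_g^{(1)}\tilde b\in C$ for each $\tilde b\in\tilde B$.

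Next I would introduce the left cyclic shift $\pi:G^m\to G^m$ of the first $k$ coordinates,
\begin{align*}
\pi(x_1,\ldots,x_k,x_{k+1},\ldots,x_m)=(x_2,x_3,\ldots,x_k,x_1,x_{k+1},\ldots,x_m),
\end{align*}
so that $\pi(x)_i=x_{\rho(i)}$ with $\rho(i)=i+1$ for $1\le i\le k-1$, $\rho(k)=1$, and $\rho$ fixing $i>k$. Because $\Phi_n\times\cdots\times\Phi_n$ is symmetric under permuting factors, $\pi$ preserves right upper density. A direct computation gives the conjugation rule $\pi^{-1}\circ\mathcal{R}_g^{(l)}=\mathcal{R}_g^{(\rho(l))}\circ\pi^{-1}$, so that for every $b\in G^m$,
\begin{align*}
\pi^{-1}\left(\mathfrak{R}_{b,g,(1,\ldots,k-1)}\right)=\mathfrak{R}_{\pi^{-1}(b),\,g,\,(\rho(1),\ldots,\rho(k-1))}=\mathfrak{R}_{\pi^{-1}(b),\,g,\,(2,\ldots,k)},
\end{align*}
since $(\rho(1),\ldots,\rho(k-1))=(2,3,\ldots,k)$ (an increasing tuple, as required by the definition of the corner set). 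Thus $\pi^{-1}$ turns a corner chain on indices $(1,\ldots,k-1)$ into one on $(2,\ldots,k)$.

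Finally, I would apply the hypothesis to the set $A^\sharp:=\pi(A)$, which again has right positive upper density. This yields a left IP set $Y=X\cap D$ and, for each $g\in Y$, a set $B^\sharp=B^\sharp(g)$ of right positive upper density with $\mathfrak{R}_{b,g,(1,\ldots,k-1)}\subseteq A^\sharp$ for all $b\in B^\sharp$. Setting $C:=\pi^{-1}(B^\sharp)$, the displayed identity gives $\mathfrak{R}_{c,g,(2,\ldots,k)}\subseteq\pi^{-1}(A^\sharp)=A$ for every $c\in C$, and $C$ has right positive upper density because $\pi$ preserves it. Taking $\tilde B=\mathcal{R}_{g^{-1}}^{(1)}C$ as in the first step then produces, for each $g\in Y$, the required set $\tilde B=\tilde B(g)$.

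I expect the only delicate points to be bookkeeping rather than anything conceptual: first, verifying the conjugation rule and hence that $\pi^{-1}$ shifts the index tuple exactly from $(1,\ldots,k-1)$ to $(2,\ldots,k)$ — in particular getting the direction right, so that one feeds $A^\sharp=\pi(A)$ (and not $\pi^{-1}(A)$) into the hypothesis; and second, confirming that both the coordinate permutation $\pi$ and the single-coordinate right translation $\mathcal{R}_{g^{-1}}^{(1)}$ preserve $\overline{d}_{\Phi}$ with respect to the product Følner sequence. The entire reduction hinges on these two invariances, so they must be checked carefully, but each follows directly from the symmetry of $\Phi_n\times\cdots\times\Phi_n$ and the stated right-invariance of $\overline{d}_{\Phi}$.
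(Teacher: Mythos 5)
Your proposal is correct and takes essentially the same route as the paper's proof: you permute the first $k$ coordinates by a cyclic shift (the paper's $\tau_{k-1}$), apply the hypothesis to the permuted copy of $A$, pull the resulting corner configurations back to obtain chains on the indices $(2,\ldots,k)$ inside $A$, and finish by setting $\tilde{B}=\mathcal{R}_{g^{-1}}^{(1)}C$ via right-invariance of the upper density. Your explicit conjugation identity $\pi^{-1}\circ\mathcal{R}_g^{(l)}=\mathcal{R}_g^{(\rho(l))}\circ\pi^{-1}$ actually pins down the orientation of the shift more carefully than the paper does; the only cosmetic caveat is that permutation-invariance of positive upper density is best justified by noting that coordinate permutations are automorphisms of $G^m$ carrying right F\o lner sequences to right F\o lner sequences, rather than by assuming the witnessing sequence is a product $\Phi_n\times\cdots\times\Phi_n$.
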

\begin{proof}
  Let $\tau_{k-1}$ be the bijection by shifting the first $k$ coordinates in $G^m$ defined in the proof of \cref{lemma_20.1}. Since $A$ is a right positive upper density set if and only if $\tau_{k-1}(A)$ is a right positive upper density set, we can also find a set of right positive upper density $\tau_{k-1}(B)$ such that for all $b \in \tau_{k-1}(B)$ we have that 
    \begin{align*}
    \{ b, \mathcal{R}_g^{(1)}b, \ldots, \mathcal{R}_g^{(1)}\ldots \mathcal{R}_g^{(k-1)} b \} \subseteq \tau_{k-1}(A).
\end{align*}
Then there exists a set of right positive upper density $B$ such that for all $b \in B$ we have that 
\begin{align*}
    \{ b, \mathcal{R}_g^{(2)}b, \ldots, \mathcal{R}_g^{(1)}\ldots \mathcal{R}_g^{(k)} b \} \subseteq A.
\end{align*}
Let $ \tilde{B} = \mathcal{R}_{g^{-1}}^{(1)} B$, we have that $\tilde{B}$ is a set of right positive upper density, since the density with respect to some right Følner sequence is right invariant. Using the fact that $\mathcal{R}^{(1)}$ commute with $\mathcal{R}^{(j)}$ for $j > 1$, then for all $\tilde{b} \in \tilde{B}$ we get that
\begin{align*}
    \{ \mathcal{R}_g^{(1)} \tilde{b}, \mathcal{R}_g^{(1)} \mathcal{R}_g^{(2)} \tilde{b}, \ldots, \mathcal{R}_g^{(1)}\ldots \mathcal{R}_g^{(k)} \tilde{b} \} \subseteq A.
\end{align*}
This concludes the proof.
\end{proof}
Let $(x_n)_{n \in \mathbb{N}}$ be a sequence in $G$. A sequence $(y_n)_{n \in \mathbb{N}}$ is a product subsystem of $(x_n)_{n \in \mathbb{N}}$ if and only if there is a sequence of sets $(F(n))_{n\in \mathbb{N}}$, with $F(n) \subseteq \mathbb{N}$ and $0 < \left| F(n) \right| < \infty$ such that for every $n \in \mathbb{N}$ we have that $\max F(n) < \min F(n+1)$ and
\begin{align*}
    y_n = \proddown_{f \in F(n)} x_f.
\end{align*}
Let $r \in \mathbb{N} $ be arbitrary. Let $\mathbf{FP}_L(y_n^{(r-1)}) \subseteq \mathbf{FP}_L(y_n^{(r-2)}) \subseteq  \ldots \subseteq  \mathbf{FP}_L(y_n^{(0)}) $ be such that for all $0 < j \leq r-1$ we have that $(y_n^{(j)})_{n \in \mathbb{N}}$ is a product subsystem of $(y_n^{(j-1)})_{n \in \mathbb{N}}$. Then for any given $x_0,\ldots,x_{r-2}$, such that $x_j \in \mathbf{FP}_L(y_n^{(j)})$ for all $0 \leq j \leq r-2$, it is straightforward to check that there exist a global $x \in \mathbf{FP}_L(y_n^{(r-1)})$ such that for all $0 \leq j \leq r-2$ we have $xx_j \in \mathbf{FP}_L(y_n^{(j)})$. 
\begin{theorem}[Color focusing]\label{thm_7.1.3}
    Let $1 \leq k-1 <m$ and $r \in \mathbb{N}$. Let  $\chi$ be any finite coloring of $G^m$.  Assume that for all right positive upper density sets $A \subseteq G^m$ and for any left IP set $X$ there exists a left IP set $Y=X \cap D \subseteq X$, where $D$ is some left IP$^{\ast}$, such that for all $g \in Y$ there exists a set of right positive upper density $B=B(g)$ such that for all $b \in B$ we have that
    \begin{align*}
        \mathfrak{R}_{B,g,(1,\ldots,k-1)} \subseteq A.
    \end{align*}
    Then there exists $\chi$-monochromatic sets of right positive upper density $ A_0,\ldots, A_r \subseteq G^m$ and left IP sets $ Y_{r-1} \subset \ldots \subset Y_0 \subset X$ such that for all $0 \leq i < j \leq r$ there exists $u_{j,i} \in Y_i$ with
    \begin{align*}
        \{ \mathcal{R}_{u_{j,i}}^{(1)} a , \ldots , \mathcal{R}_{u_{j,i}}^{(1)} \ldots \mathcal{R}_{u_{j,i}}^{(k)} a : a \in A_j \} \subseteq A_i.
    \end{align*}
\end{theorem}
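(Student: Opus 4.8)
The plan is to build the tower $A_0,\dots,A_r$, the nested IP sets, and the shifts by induction on the level, feeding each newly constructed monochromatic set back as the target of the recurrence supplied by the hypothesis together with \cref{thm_7.1.2}. Write $(\star)$ for the combined statement: for every right positive upper density $A\subseteq G^m$ and every left IP set $W$ there is a left IP set $Y=W\cap D\subseteq W$ (with $D$ left IP$^\ast$) so that for each $g\in Y$ there is a right positive upper density set $\tilde B(g)$ with $\{\mathcal R_g^{(1)}\tilde b,\dots,\mathcal R_g^{(1)}\cdots\mathcal R_g^{(k)}\tilde b\}\subseteq A$ for all $\tilde b\in \tilde B(g)$. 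I will arrange that every $Y_i$ has the form $\mathbf{FP}_L(y_n^{(i)})$ with $(y_n^{(i)})$ a product subsystem of $(y_n^{(i-1)})$, so that the global element fact recorded just before the statement applies to the chain $Y_0\supseteq\dots\supseteq Y_s$. Throughout I use that the positive upper density sets are partition regular, so that any such set contains a $\chi$-monochromatic piece of positive upper density.

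\textbf{Base and inductive step.} First choose $A_0$ to be a $\chi$-monochromatic subset of $G^m$ of positive upper density. Adopt the convention $Y_{-1}=X$, and suppose $A_0,\dots,A_s$, the IP sets $Y_0\supseteq\dots\supseteq Y_{s-1}$, and shifts $u_{j,i}\in Y_i$ ($0\le i<j\le s$) realizing $\{\mathcal R_{u_{j,i}}^{(1)}a,\dots,\mathcal R_{u_{j,i}}^{(1)}\cdots\mathcal R_{u_{j,i}}^{(k)}a:a\in A_j\}\subseteq A_i$ have been built. Apply $(\star)$ with target $A_s$ and $W=Y_{s-1}$ to obtain $Y_{s-1}\cap D^{(s)}$, and, using that $D^{(s)}$ is IP$^\ast$, pass to a product subsystem to get $Y_s=\mathbf{FP}_L(y_n^{(s)})\subseteq Y_{s-1}\cap D^{(s)}$ (for $s=0$ simply take any $\mathbf{FP}_L$ set inside $X\cap D^{(0)}$). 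Now invoke the global element fact for the chain $Y_0\supseteq\dots\supseteq Y_s$ with the elements $u_{s,i}\in Y_i$ ($i=0,\dots,s-1$): it yields a single $w\in Y_s$ with $w\,u_{s,i}\in Y_i$ for all $i<s$. Since $w\in Y_s\subseteq Y_{s-1}\cap D^{(s)}$, the set $\tilde B_s(w)$ from $(\star)$ has positive upper density and maps into $A_s$ via $w$; let $A_{s+1}$ be a $\chi$-monochromatic positive upper density subset of $\tilde B_s(w)$. Finally set $u_{s+1,s}=w$ and $u_{s+1,i}=w\,u_{s,i}$ for $i<s$; each lies in the required $Y_i$ by the global element property.

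\textbf{Composition of corner maps.} The verification that these shifts work rests on a composition identity. I will check that if $A_{s+1}$ maps into $A_s$ via $w$ and $A_s$ maps into $A_i$ via $v=u_{s,i}$, then $A_{s+1}$ maps into $A_i$ via the product $wv$: for $a\in A_{s+1}$ and each $1\le t\le k$,
\[
\mathcal R_{wv}^{(1)}\cdots\mathcal R_{wv}^{(t)}a=\mathcal R_{v}^{(1)}\cdots\mathcal R_{v}^{(t)}\bigl(\mathcal R_{w}^{(1)}\cdots\mathcal R_{w}^{(t)}a\bigr),
\]
and since $\mathcal R_{w}^{(1)}\cdots\mathcal R_{w}^{(t)}a\in A_s$ the right-hand side lies in $A_i$. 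Iterating this telescopes $u_{s+1,i}=w\,u_{s,i}=w_{s+1}w_s\cdots w_{i+1}$ with $w_j=u_{j,j-1}$, so only the genuine corner configuration via the product shift is produced, not a higher-dimensional one; this is exactly why the product $wv$, rather than a composite of two maps, is the right object to place in $Y_i$.

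\textbf{Main obstacle.} The essential difficulty is producing one fixed source $A_{s+1}$ that maps into \emph{all} lower targets simultaneously while every shift $u_{s+1,i}$ stays inside the prescribed $Y_i$. The composition identity resolves the single-source problem but forces the shifts to be the products $w\,u_{s,i}$, and keeping these inside $Y_i$ is possible only because the product of $\mathbf{FP}_L$-elements remains in the same set when the ``deep'' factor $w$ uses indices exceeding those of $u_{s,i}$; reconciling ``deep enough for the product'' with ``still a valid recurrence shift for $A_s$'' is the crux, and it is handled by forcing $Y_s$ inside the recurrence set $Y_{s-1}\cap D^{(s)}$ before extracting $w$ via the global element fact. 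A supporting point, used at each stage, is that the intersection of a product-subsystem $\mathbf{FP}_L$ set with an IP$^\ast$ set again contains a product subsystem; this follows from the standard fact that every such $\mathbf{FP}_L$ set belongs to some idempotent in $(\beta G,\bullet)$ while IP$^\ast$ sets belong to every idempotent.
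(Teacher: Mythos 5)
Your proposal is correct and follows essentially the same route as the paper's proof: induction on the number of levels, with the new monochromatic set $A_{s+1}$ extracted by partition regularity from the recurrence set $\tilde B_s(w)$ supplied by the hypothesis combined with \cref{thm_7.1.2}, the nested IP sets obtained as $\mathbf{FP}_L$ sets of product subsystems via the idempotent containing all tails together with the IP$^{\ast}$ set $D^{(s)}$, the shift $w=u_{s+1,s}$ chosen by the global element fact so that $u_{s+1,i}=w\,u_{s,i}\in Y_i$, and the verification by the same composition identity $\mathcal{R}_{wv}^{(1)}\cdots\mathcal{R}_{wv}^{(t)}=\mathcal{R}_{v}^{(1)}\cdots\mathcal{R}_{v}^{(t)}\mathcal{R}_{w}^{(1)}\cdots\mathcal{R}_{w}^{(t)}$ that underlies the paper's final display. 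Your write-up is in fact somewhat more explicit than the paper's on that last compositional step, but there is no substantive difference in the argument.
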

\begin{proof}
    We proceed by induction on $r$. If $r=1$ it follows from the fact that sets of upper positive density are partition regulars, indeed there exists a monochromatic set of upper positive density $A_0$. Thanks to \cref{thm_7.1.2} we can find a left IP set $Y_0 \subset X$ such that for all $g_1 \in Y_0$ we have that there exists a set of positive upper density $B_1=B_1(g_1)$ such that for all $b \in B_1$ we have 
    \begin{align*}
         \{ \mathcal{R}_{g_1}^{(1)} b , \ldots , \mathcal{R}_{g_1}^{(1)} \ldots \mathcal{R}_{g_1}^{(k)} b \} \subseteq A_{0}.
    \end{align*}
    Up to replace $Y_0$ with a left IP subset of the form $\mathbf{FP}_L(y_n^{(0)})$, we may assume that $Y_0 = \mathbf{FP}_L(y_n^{(0)})$. Let's choose and fix $u_{1,0} \in Y_0$ arbitrarily and find the corresponding set of positive upper density $B_1=B_1(u_{1,0})$ with the property described above. The finite coloring on $G^m$ induces a finite coloring of $B_1$. Again using the partition regularity we can find a monochromatic set of positive upper density $A_1 \subseteq B_1$ such that
    \begin{align*}
         \{ \mathcal{R}_{u_{1,0}}^{(1)} a , \ldots , \mathcal{R}_{u_{1,0}}^{(1)} \ldots \mathcal{R}_{u_{1,0}}^{(k)} a: a \in A_1 \} \subseteq A_{0}.
    \end{align*}
    If $r \geq 2$ assume we have $A_0, \ldots, A_{r-1}$, and $Y_{r-2}=\mathbf{FP}_L(y_n^{(r-2)}) \subset \ldots \subset Y_0 = \mathbf{FP}_L(y_n^{(0)})$, where for all $ 1 \leq j \leq  r-2$ we have that $(y_n^{(j)})_{n \in \mathbb{N}}$ is a product subsystem of $(y_n^{(j-1)})_{n \in \mathbb{N}}$. Then $A_r$ and $Y_{r-1}$ and are constructed as follows: using \cref{thm_7.1.2} with $Y_{r-2}$ and $A_{r-1}$ we have that there exists a left IP set $Y := Y_{r-2} \cap D \subset Y_{r-2}$ such that for all $g \in Y $ there exists a set of positive upper density $B=B(g) $ such that for all $b \in B$ we have
    \begin{align*}
         \{ \mathcal{R}_{g}^{(1)} b , \ldots , \mathcal{R}_{g}^{(1)} \ldots \mathcal{R}_{g}^{(k)} b \} \subseteq A_{r-1}.
    \end{align*}
    Notice that
    \begin{align*}
       \bigcap_{m=1}^{\infty} \overline{ \mathbf{FP}_L(y_n^{(r-2)} : n \geq m)} 
    \end{align*}
     is a subsemigroup of $(\beta G, \bullet)$ and there exists an idempotent $p \in (\beta G,\bullet)$ such that for all $m \in \mathbb{N}$ we have that $\mathbf{FP}_L(y_n^{(r-2)} : n \geq m) \in p$ (see \cite[Lemma 5.11 on page 112]{idem}). Moreover, since $D$ is a left IP$^{\ast}$ we have that $D \in p$, thus we have $Y=D \cap Y_{r-2} \in p$ since ultrafilters are closed under finite intersections. So we can find a product subsystem $(y_n^{(r-1)})_{n \in \mathbb{N}}$ of $(y_n^{(r-2)})_{n \in \mathbb{N}}$ such that $Y_{r-1} := \mathbf{FP}_L(y_n^{(r-1)}) \subseteq Y \subseteq Y_{r-2} $ (see \cite[Theorem 5.14 on page 113]{idem}). Hence we have that there exists a left IP set $Y_{r-1} \subset Y_{r-2}$ such that for all $g_r \in Y_{r-1} $ there exists a set of positive upper density $B_r=B_r(g_r) $ such that for all $b \in B_r$ we have that
    \begin{align*}
         \{ \mathcal{R}_{g_r}^{(1)} b , \ldots , \mathcal{R}_{g_r}^{(1)} \ldots \mathcal{R}_{g_r}^{(k)} b \} \subseteq A_{r-1}
    \end{align*}
     Thanks to the induction hypothesis for all $1 \leq i < r-1$ we already have found $u_{r-1,i} \in Y_i $, moreover we can choose $u_{r,r-1} \in Y_{r-1}$ such that for all $1 \leq i < r-1$, we have that $u_{r,i}:=u_{r,r-1} u_{r-1,i} \in Y_i$. Let find the corresponding $B_r=B_r(u_{r,r-1})$.
    The finite coloring on $G^m$ induces a finite partition of $B_r$. Using the partition regularity we can find a monochromatic set of positive upper density $A_r \subseteq B_r$ such that
    \begin{align*}
         \{ \mathcal{R}_{u_{r,r-1}}^{(1)} a , \ldots , \mathcal{R}_{u_{r,r-1}}^{(1)} \ldots \mathcal{R}_{u_{r,r-1}}^{(k)} a: a \in A_r \} \subseteq A_{r-1}.
    \end{align*}
    Let $0 \leq i < j \leq r$. If $j < r$ then the results follow from the induction hypothesis. If $j=r$ then by induction hypothesis we have that
    \begin{align*}
         \{ \mathcal{R}_{u_{r-1,i}}^{(1)} a , \ldots , \mathcal{R}_{u_{r-1,i}}^{(1)} \ldots \mathcal{R}_{u_{r-1,i}}^{(k)} a : a \in A_{r-1} \} \subseteq A_{i}
    \end{align*}
    then since $u_{r,i}=u_{r,r-1} u_{r-1,i} \in Y_i$, it follows that
     \begin{align*}
         \{ \mathcal{R}_{u_{r,i}}^{(1)} a , \ldots , \mathcal{R}_{u_{r,i}}^{(1)} \ldots \mathcal{R}_{u_{r,i}}^{(k)} a: a \in A_r \}
         & \subseteq \{ \mathcal{R}_{u_{r-1,i}}^{(1)} a , \ldots , \mathcal{R}_{u_{r-1,i}}^{(1)} \ldots \mathcal{R}_{u_{r-1,i}}^{(k)} a :  a \in A_{r-1} \} \\
         & \subseteq A_i.
    \end{align*}
\end{proof} 
Thanks to the color focus lemma, we can deduce the following
\begin{theorem}\label{thm_7.1.4}
    Let $1 \leq k-1 <m$. Let  $\chi: G^m \to \{1,\ldots,r\}$ be any finite coloring of $G^m$. Assume that for all right positive upper density sets $A \subseteq G^m$ and that for any left IP set $X$ there exists a left IP set $Y=X \cap D \subseteq X$, where $D$ is some left IP$^{\ast}$, such that for all $g \in Y$ there exists a set of right positive upper density $B=B(g)$ such that for all $b \in B$ we have that
    \begin{align*}
        \mathfrak{R}_{B,g,(1,\ldots,k-1)} \subseteq A.
    \end{align*}
    Then we have that $D_k(\chi)$ is a left IP$^{\ast}$ set.
\end{theorem}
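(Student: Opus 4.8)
The plan is to combine the color-focusing lemma (\cref{thm_7.1.3}) with the fact that a nested family of left IP sets of the form $\mathbf{FP}_L(y_n^{(j)})$ shares a common idempotent ultrafilter, so that a single shift $u_{j,i}$ can be forced to lie in an arbitrary prescribed left IP set $X$. The goal is to show $D_k(\chi)$ meets every left IP set $X$, which by the characterization recalled in the Stone–Čech subsection is exactly the assertion that $D_k(\chi)$ is left IP$^{\ast}$. So I fix an arbitrary finite coloring $\chi : G^m \to \{1,\ldots,r\}$ and an arbitrary left IP set $X$, and my task reduces to producing a single $g \in X$ together with a point $x \in G^m$ for which $\mathfrak{R}_{x,g,(1,\ldots,k)}$ is $\chi$-monochromatic.

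First I would invoke \cref{thm_7.1.3} with this $\chi$, this $X$, and with the value $r$ equal to the number of colors. The hypothesis of \cref{thm_7.1.4} is verbatim the hypothesis of \cref{thm_7.1.3}, so the lemma applies and yields $\chi$-monochromatic sets $A_0,\ldots,A_r$ of right positive upper density, nested left IP sets $Y_{r-1}\subset\cdots\subset Y_0\subset X$, and for each $0\le i<j\le r$ a shift $u_{j,i}\in Y_i$ such that
\begin{align*}
    \{ \mathcal{R}_{u_{j,i}}^{(1)} a , \ldots , \mathcal{R}_{u_{j,i}}^{(1)} \ldots \mathcal{R}_{u_{j,i}}^{(k)} a : a \in A_j \} \subseteq A_i.
\end{align*}
The crucial point is that every $u_{j,i}$ lies in $Y_i\subseteq X$.

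Next comes the pigeonhole step, which is where the number $r+1$ of focused sets is spent. Each of the $r+1$ sets $A_0,A_1,\ldots,A_r$ is monochromatic, so by the pigeonhole principle two of them, say $A_j$ and $A_i$ with $i<j$, receive the same color $C_\ell$. Now pick any point $a\in A_j$ (which is nonempty, having positive upper density) and set $x=a$ and $g=u_{j,i}\in Y_i\subseteq X$. Then $x=a\in A_j\subseteq C_\ell$, while the displayed containment gives
\begin{align*}
    \mathcal{R}_{g}^{(1)}x,\ \mathcal{R}_{g}^{(1)}\mathcal{R}_{g}^{(2)}x,\ \ldots,\ \mathcal{R}_{g}^{(1)}\cdots\mathcal{R}_{g}^{(k)}x \in A_i\subseteq C_\ell.
\end{align*}
Hence the entire corner set $\mathfrak{R}_{x,g,(1,\ldots,k)}=\{x,\mathcal{R}_g^{(1)}x,\ldots,\mathcal{R}_g^{(1)}\cdots\mathcal{R}_g^{(k)}x\}$ is contained in the single color class $C_\ell$, so $g\in D_k(\chi)$. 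Since $g\in X$, we have exhibited a point of $D_k(\chi)\cap X$.

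Because $X$ was an arbitrary left IP set and $\chi$ an arbitrary finite coloring, $D_k(\chi)$ meets every left IP set, i.e.\ $D_k(\chi)$ is left IP$^{\ast}$, as claimed. The main conceptual obstacle has already been absorbed into \cref{thm_7.1.3}: the delicate part was arranging that the focusing shifts $u_{j,i}$ could be kept inside the prescribed IP set $X$ (achieved there by passing to product subsystems and using that the nested $\mathbf{FP}_L$ sets lie in a common idempotent, together with $D\in p$ for the IP$^{\ast}$ sets $D$). Given that lemma, the remaining work here is only the pigeonhole on $r+1$ monochromatic sets against $r$ colors, and the proof is short.
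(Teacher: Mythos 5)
Your proposal is correct and follows essentially the same route as the paper's own proof: invoke \cref{thm_7.1.3} with $r$ equal to the number of colors, pigeonhole the $r+1$ monochromatic sets $A_0,\ldots,A_r$ against the $r$ colors to find $A_i$ and $A_j$ of the same color, and observe that $u_{j,i}\in Y_i\subseteq X$ together with any $a\in A_j$ yields a $\chi$-monochromatic corner set $\mathfrak{R}_{a,u_{j,i},(1,\ldots,k)}$, so $D_k(\chi)\cap X\neq\emptyset$. Your added remark that $A_j$ is nonempty because it has positive upper density is a detail the paper leaves implicit, but otherwise the two arguments coincide.
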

\begin{proof}
        Let $X$ be any left IP set, using \cref{thm_7.1.3} with $r$ equal to the number of colors used, then there exists sets of right positive upper-density $ A_0,\ldots, A_r \subseteq G^m$ and there exists left IP sets $ Y_{r-1} \subset \ldots \subset Y_0 \subset X$ such that for all $0 \leq i < j \leq r$ there exists $u_{j,i} \in Y_i$ with
    \begin{align*}
        \{ \mathcal{R}_{u_{j,i}}^{(1)} a , \ldots , \mathcal{R}_{u_{j,i}}^{(1)} \ldots \mathcal{R}_{u_{j,i}}^{(k)} a : a \in A_j \} \subseteq A_i.
    \end{align*}
    Since there are $r+1$ sets but only $r$ colors we deduce by pigeonhole principle that two sets must have the same color. Hence there exists $0 \leq i < j \leq r$ such that $A_i$ and $A_j$ have the same color. So we have that
    \begin{align*}
        \{a, \mathcal{R}_{u_{j,i}}^{(1)} a , \ldots , \mathcal{R}_{u_{j,i}}^{(1)} \ldots \mathcal{R}_{u_{j,i}}^{(k)} a : a \in A_j \} .
    \end{align*}
    is $\chi$-monochromatic. So $\{u_{j,i}\} \subseteq D_{k}(\chi) \cap X \neq \emptyset$ and since $X$ was an arbitrary left IP set we have that $D_{k}(\chi)$ is a left IP$^{\ast}$ set.
    \end{proof}
    
We assumed that for all right positive upper density sets $A \subseteq G^m$ and that for any left IP set $X$ there exists a left IP set $Y=X \cap D \subseteq X$, where $D$ is some left IP$^{\ast}$, such that for all $g \in Y$ there exists a set of right positive upper density $B=B(g)$ such that for all $b \in B$ we have that
    \begin{align*}
        \mathfrak{R}_{B,g,(1,\ldots,k-1)} \subseteq A.
    \end{align*}
    Below, in \cref{thm_7.1.5} we prove that this is true in $G^2$ and $k-1=1$, as a consequence of \cref{thm_6.4}.
\begin{theorem}\label{thm_7.1.5}
    Let $G$ be a countable, discrete and amenable group. For all right positive upper density set $A \subseteq G^2$ and for any left IP set $X$ there exists a left IP set $Y=X \cap D \subseteq X$, where $D$ is some left IP$^{\ast}$, such that for all $g \in Y$ there exists a set of right positive upper density $B=B(g)$ such that for all $b \in B$ we have that
    \begin{align*}
        \mathfrak{R}_{B,g,(1)} \subseteq A.
    \end{align*}
\end{theorem}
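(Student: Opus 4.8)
The plan is to deduce this directly from \cref{thm_6.4}, taking the given positive upper density set $A$ itself in the role of $E$, and then reading off the length-one right corner from the single intersection that \cref{thm_6.4} controls.

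First I would fix a right Følner sequence $\Phi=(\Phi_n)_{n\in\mathbb{N}}$ witnessing $\overline{d}_\Phi(A)>0$, which exists since $A$ is a right positive upper density set. Applying \cref{thm_6.4} with $E=A$, the set
\[
    D:=\{\,g\in G:\overline{d}_\Phi\bigl(A\cap A(g^{-1},e_G)\bigr)>0\,\}
\]
is both right and left IP$^{\ast}$; in particular it is a left IP$^{\ast}$ set, which is all we need here.

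Next, given an arbitrary left IP set $X$, I would set $Y:=X\cap D$. Because $X$ is left IP there is an idempotent $p\in(\beta G,\bullet)$ with $X\in p$, and because $D$ is left IP$^{\ast}$ it belongs to every idempotent, so $D\in p$; as ultrafilters are closed under finite intersection, $Y=X\cap D\in p$, so $Y$ is again a left IP set, of the required shape $Y=X\cap D\subseteq X$ with $D$ left IP$^{\ast}$.

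Finally I would verify the corner condition. Fix $g\in Y\subseteq D$ and set $B=B(g):=A\cap A(g^{-1},e_G)$; by the definition of $D$ this satisfies $\overline{d}_\Phi(B)>0$, so $B$ is a set of right positive upper density. For a point $b=(a,c)\in B$, membership in the first factor gives $(a,c)\in A$, while membership in $A(g^{-1},e_G)$ means exactly that $(ag,c)\in A$. Hence $\mathfrak{R}_{b,g,(1)}=\{(a,c),(ag,c)\}=\{(a,c),\mathcal{R}_g^{(1)}(a,c)\}\subseteq A$, and therefore $\mathfrak{R}_{B,g,(1)}\subseteq A$. The argument is essentially a translation exercise, and the only points requiring care are the two bookkeeping observations just used: that the single set $A\cap A(g^{-1},e_G)$ appearing in \cref{thm_6.4} encodes precisely the pair $\{(a,c),(ag,c)\}$ once the second coordinate is shifted trivially by $e_G$, and that intersecting the arbitrary left IP set $X$ with the left IP$^{\ast}$ set $D$ preserves the left IP property via the idempotent characterization of these classes in $(\beta G,\bullet)$. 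I do not expect any genuine obstacle beyond these.
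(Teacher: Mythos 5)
Your proof is correct and takes essentially the same route as the paper's: apply \cref{thm_6.4} with $E=A$ to get the left IP$^{\ast}$ set $D=\{g\in G:\overline{d}_{\Phi}(A\cap A(g^{-1},e_G))>0\}$, set $Y=X\cap D$ and $B(g)=A\cap A(g^{-1},e_G)$, and read off the corner $\{b,\mathcal{R}_g^{(1)}b\}\subseteq A$. The only difference is that you make explicit, via the idempotent characterization in $(\beta G,\bullet)$, why $X\cap D$ is again a left IP set --- a step the paper asserts without comment.
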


\begin{proof}
    Since $A$ is of right positive upper density there exists a right Følner sequence $\Phi$ such that $\overline{d}_{\Phi}(A) >0$. From \cref{thm_6.4} we have that \begin{align*} D=\{ g \in G : \overline{d}_{\Phi}(A \cap A(g^{-1},e_G)) > 0 \} \end{align*} is left IP$^{\ast}$. Let $X$ be any left IP set, we have that $Y=D\cap X$ is a left IP set, and for all $g \in Y$ we have that $B(g)= A \cap A(g^{-1},e_G)$ is a set of right positive upper density and thus we have, as desired, that for all $b \in B$ 
    \begin{align*}
      \mathfrak{R}_{b,g,(1)}=  \{ b, \mathcal{R}_g^{(1)}b  \} \subseteq A.
    \end{align*}
    This concludes the proof.
\end{proof}

As a corollary of the foregoing, we can deduce the IP-van der Waerden's theorem in $G^2$. In other words, we prove \cref{Main_VdW-2}.
\begin{proof}[Proof of \cref{Main_VdW-2}]
   It follows by \cref{thm_7.1.5} and \cref{thm_7.1.4}.
\end{proof}
Similarly, we can prove that for any finite partition $G^2 = \bigcup_{j=1}^{r} C_j$, we have that there exists $1 \leq j \leq r$ such that
\begin{align*}
    S_2(\chi)&=\{ g \in G :  \exists x \in G^2 \text{ such that } \{ x, \mathcal{L}_g^{(1)} x, \mathcal{L}_g^{(1)}\mathcal{L}_g^{(2)}x\} \subseteq C_j \}\\
    &=\{ g \in G : \exists (a,b) \in G^2 \text{ such that } \{ (a,b),(ga , b),(ga , gb )\} \subseteq C_j \}
\end{align*}
is right IP$^{\ast}$.

\subsection{IP-van der Waerden Theorem for FC-groups}
\label{subsec:vdw group}
We give a sufficient condition for \cref{conjecture_2} to be true for $G^m$ with $m \in \mathbb{N}$. We define the concept of a family $\mathcal{W}$ of sets with the van der Waerden property and of groups with that property, namely van der Waerden groups. Thanks to those concepts, which will be later presented and defined, we prove \cref{thm_7.2.1}. More precisely, we prove that for any IP set $X$ and any $A \in \mathcal{W}$ there exists an IP set $Y \subseteq X$ such that for all $g \in Y$ there exists $B=B(g) \in \mathcal{W}$ such that for all $b \in B$ we have that $\{ b, \mathcal{R}_g^{(1)} b , \ldots, \mathcal{R}_g^{(1)} \ldots \mathcal{R}_g^{(k)} \} \subseteq A$ is equivalent to the fact that $D_k(\chi)$ is an IP$^{\ast}$ for every finite coloring $\chi$. Consequently, we derive a color-focusing lemma quite analogous to the one presented in \cref{subsec:vdw amenable dimension 2} with \cref{thm_7.1.3}. The main difference is that te sets in the van der Waerden family replace the sets with positive density in our arguments, and that the set $D_k(\chi)$ plays the role of $D=\{ g \in G : \overline{d}_{\Phi}(A \cap A(g^{-1},e_G) >0 \}$ when $m >2$. Therefore, by induction on $k$, we prove the IP-van der Waerden Theorem is true for van der Waerden groups.
Finally, we prove that FC-groups are van der Waerden groups by showing that the family of piecewise syndetic sets possesses the van der Waerden property. 
\begin{definition}\label{syndetic-thick-pws}
Let $G$ be a group.
\begin{enumerate}
    \item  A set $S \subseteq G$ is said to be \textbf{right syndetic} if there exists some finite $H \subseteq G$ such that 
 \begin{align*}
     G = \bigcup_{h \in H} h^{-1} S
 \end{align*}
 \item  A set $T \subseteq G$ is said to be \textbf{right thick} if for all finite $F \subseteq G$ exists $ x \in G $ such that $Fx \subseteq T$.
 \item  A set $A \subseteq G$ is said to be \textbf{right piecewise syndetic} if there exists some finite $H \subseteq G$ such that 
 \begin{align*}
     \bigcup_{h \in H}  h^{-1}A
 \end{align*}
 is right thick.
\end{enumerate}
\end{definition}
We have symmetric notions with respect to inversion of left syndetic sets, left thick sets and left piecewise syndetic sets. 
\begin{definition}\label{def_almost_vdw}
    Let $\mathcal{W}$ be a non-empty family of subsets of $G$ such that $\emptyset \not\in \mathcal{W}$. We say that $\mathcal{W}$ possesses the almost right van der Waerden property if for all $A \in \mathcal{W}$ either 1. or 2. is true.
    \begin{enumerate}
        \item There exists a right syndetic set $S$ such that for all finite $F \subseteq S$ we have that 
        \begin{align*}
           A_F= \bigcap_{f \in F} Af^{-1} \in \mathcal{W}.
        \end{align*}
        \item There exists a left thick set $T$ such that for all finite $F \subseteq T$ we have that
        \begin{align*}
            A_F = \bigcap_{f \in F} Af^{-1} \in \mathcal{W}.
        \end{align*}
    \end{enumerate}
\end{definition}

\begin{definition}\label{def_vdw_family}
    Let $\mathcal{W}$ be a non-empty family of subsets of $G$ such that $\emptyset \not\in \mathcal{W}$. 
    We say that $\mathcal{W}$ possesses the right van der Waerden property if the following properties are satisfied. 
    \begin{enumerate}
        \item $\mathcal{W}$ possesses the almost right van der Waerden property.
        \item $\mathcal{W}$ is partition regular.
        \item $\mathcal{W}$ is closed under the right translation, i.e. for all $A \in \mathcal{W}$ and for all $x \in G$ we have that $Ax \in \mathcal{W}$.
    \end{enumerate}
\end{definition}

The reason for requiring that a family $\mathcal{W}$ of sets having the right van der Waerden property has to be partition regular and closed under the right translation, i.e. right invariant, is that $\mathcal{W}$ sets plays the same role as positive upper density sets and therefore must possess at least the same essential properties.

\begin{definition}\label{def_vdw_group}
    Let $G$ be a countable, discrete, and amenable group. $G$ is said to be a right Van der Waerden group if for all $m \in \mathbb{N}$, we have that $G^m$ possesses a family of subsets $\mathcal{W}$ which has the right Van der Waerden property.
\end{definition}
Almost left van der Waerden property, left van der Waerden property, and left van der Waerden groups are defined similarly. 
We require that for each $m$, $G^m$ possess a family that has the van der Waerden property since this allows us to do the induction on $m$. Note that requiring that the group is amenable is not too restrictive since in \cite{free} V. Bergelson and N. Hindman have provided a counterexample for a free group either in $G^2$ or in $G^3$.
\begin{theorem}\label{thm_7.2.1}
Let $G$ be a right van der Waerden group. Let $\mathcal{W}$ be a family of subsets of $G^m$ which has the right Van der Waerden property. Let $1 \leq k < m$. Then the following are equivalent
\begin{enumerate}
    \item $D_k(\chi)$ is a left IP$^{\ast}$ set for any finite coloring $\chi : G^m \to \{1 ,\ldots , r\}$ of $G^m$.
    \item  Let $X$ be any left IP set, let $T \subseteq G^m$ be any left thick set and $S \subseteq G^m$ be any right syndetic set. Then there exist left IP sets $Y_T, Y_S \subset X$ such that for all $g \in Y_T$ there exist $y=y(g),x=x(g) \in G$ such that 
    \begin{align*}
        y\mathfrak{R}_{x,g,(1,\ldots,k)}=\mathfrak{R}_{yx,g,(1,\ldots,k)} \subseteq T.
    \end{align*}
    and for all $g \in Y_S$ there exists $h=h(g) \in G$ such that
    \begin{align*}
        h\mathfrak{R}_{x,g,(1,\ldots,k)}=\mathfrak{R}_{hx,g,(1,\ldots,k)} \subseteq S.
    \end{align*}
    \item Let $X$ be any left IP set, and let $A \in \mathcal{W}$ be arbitrary, then there exists a left IP set $Y \subset X$ such that for all $g \in Y$ there exists $B=B(g) , \tilde{B}=\tilde{B}(g) \in \mathcal{W}$ such that for all $b \in B$ we have that  \begin{align*}
        \{ b, \mathcal{R}_g^{(1)} b,\ldots,  \mathcal{R}_g^{(1)} \ldots \mathcal{R}_g^{(k)} b  \} \subseteq A 
    \end{align*}
    and for all $\tilde{b} \in \tilde{B}$ 
     \begin{align*}
         \{  \mathcal{R}_g^{(1)} \tilde{b}, \mathcal{R}_g^{(1)} \mathcal{R}_g^{(2)} \tilde{b} ,\ldots,  \mathcal{R}_g^{(1)} \ldots \mathcal{R}_g^{(k+1)} \tilde{b}  \} \subseteq A
   \end{align*}
     \end{enumerate}
\end{theorem}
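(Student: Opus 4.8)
The plan is to prove the cycle of implications $(3)\Rightarrow(1)\Rightarrow(2)\Rightarrow(3)$, treating the two inclusions in item $(3)$ separately: I call $(3a)$ the base corner condition $\mathfrak{R}_{b,g,(1,\ldots,k)}\subseteq A$ and $(3b)$ the shifted condition involving coordinates $2,\ldots,k+1$. The key realization is that \emph{no color-focusing is required} for this equivalence (that machinery is what later pushes level $k$ to level $k+1$); here everything is a dictionary between the abstract largeness of $(2)$, the family $\mathcal{W}$ of $(3)$, and the monochromatic configurations of $(1)$, using only that $\mathcal{W}$ is partition regular, right-translation invariant, and has the almost right van der Waerden property, together with \cref{lemma_20.1}.

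First, $(3a)\Rightarrow(1)$ is immediate from partition regularity: given a finite coloring $\chi$ with classes $C_1,\ldots,C_r$, note that $G^m\in\mathcal{W}$ (as holds for the van der Waerden families in question), so some class $C_j\in\mathcal{W}$. Applying $(3a)$ with $A=C_j$ and an arbitrary left IP set $X$ produces a left IP set $Y\subseteq X$ so that each $g\in Y$ admits a nonempty $B(g)\in\mathcal{W}$ with $\mathfrak{R}_{b,g,(1,\ldots,k)}\subseteq C_j$; any single $b$ gives a $\chi$-monochromatic corner, so $Y\subseteq D_k(\chi)\cap X$, and arbitrariness of $X$ makes $D_k(\chi)$ left IP$^{\ast}$. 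For $(1)\Rightarrow(2)$, the thick clause is automatic: for each $g$ the set $\mathfrak{R}_{e,g,(1,\ldots,k)}$ is finite, so left thickness gives $y=y(g)$ with $y\,\mathfrak{R}_{e,g,(1,\ldots,k)}=\mathfrak{R}_{y,g,(1,\ldots,k)}\subseteq T$, and $Y_T=X$ works with no appeal to $(1)$. For the syndetic clause, write $G^m=\bigcup_{h\in H}h^{-1}S$ with $H$ finite, color $z$ by a choice of $h\in H$ with $hz\in S$, and set $Y_S:=D_k(\chi)\cap X$, which is left IP by $(1)$; for $g\in Y_S$ a monochromatic $\mathfrak{R}_{x,g,(1,\ldots,k)}\subseteq h^{-1}S$ yields $\mathfrak{R}_{hx,g,(1,\ldots,k)}\subseteq S$.

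The heart is $(2)\Rightarrow(3)$. Fix $A\in\mathcal{W}$; by the almost right van der Waerden property we are either in the syndetic case (with a right syndetic $S$) or the thick case (with a left thick $T$), and we feed that very $S$ or $T$ into the matching clause of $(2)$. This gives, for $g$ in a left IP set $Y$, a base point $w=w(g)$ with $F:=\mathfrak{R}_{w,g,(1,\ldots,k)}$ a finite subset of $S$ (resp.\ $T$), whence $A_F=\bigcap_{f\in F}Af^{-1}\in\mathcal{W}$. Writing $s_i=(g,\ldots,g,e,\ldots,e)$ with $i$ entries equal to $g$, so that the corner is $\{b s_0,\ldots,b s_k\}$ and $F=\{w s_0,\ldots,w s_k\}$, the set $B:=A_F\,w\in\mathcal{W}$ (right invariance) satisfies $(3a)$, since $b=zw\in A_F w$ forces $b s_i=z\,(w s_i)\in A$. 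For $(3b)$ I use that $(2)\Rightarrow(3a)\Rightarrow(1)$ is now available, so \cref{lemma_20.1} gives that $\tilde D_k(\chi)$ is left IP$^{\ast}$ for every $\chi$; rerunning the covering-coloring step and the translation above with the coordinate-$(2,\ldots,k+1)$ corner $\mathfrak{R}_{\cdot,g,(2,\ldots,k+1)}$ in place of $\mathfrak{R}_{\cdot,g,(1,\ldots,k)}$ produces $B'\in\mathcal{W}$ consisting of base points $c=\mathcal{R}_g^{(1)}\tilde b$ with $\mathfrak{R}_{c,g,(2,\ldots,k+1)}\subseteq A$. Since $s_1 t_j=s_{j+1}$ for the $(2,\ldots,k+1)$-shifts $t_j$, the set $\tilde B:=\mathcal{R}_{g^{-1}}^{(1)}B'\in\mathcal{W}$ (again right invariance) gives exactly the shifted inclusion of $(3b)$.

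I expect the main obstacle to be this final translation. The delicate point is not the algebra of the single case $(3a)$, but the correct handling of $(3b)$: one must resist deriving it from $(3a)$ by a single right translation, which is impossible because no fixed group element carries every $s_i$ to $s_{i+1}$. The clean route is instead to re-establish the whole argument for the permuted corner via \cref{lemma_20.1} and only then correct by the right translation $\mathcal{R}_{g^{-1}}^{(1)}$, using right invariance of $\mathcal{W}$. Keeping the bookkeeping of the shift elements $s_i$ aligned so that $A_F$, right-translated by the base point, reproduces precisely the desired corner-in-$A$ condition is the one genuinely error-prone step; the remaining items — that exactly one of the two almost van der Waerden cases applies to each $A$, and that $G^m\in\mathcal{W}$ so partition regularity is usable — are routine.
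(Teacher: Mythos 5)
Your proposal is correct and follows essentially the same route as the paper: the same implication cycle $1\Rightarrow2\Rightarrow3\Rightarrow1$, the same covering-by-$h^{-1}S$ coloring trick for the syndetic clause, the same construction $B=A_Fz_0$ from the almost right van der Waerden property plus right-translation invariance, and the same derivation of $\tilde{B}$ via \cref{lemma_20.1} followed by the translation $\mathcal{R}_{g^{-1}}^{(1)}$. Your two refinements — observing that the thick clause of $(2)$ holds trivially with $Y_T=X$ (the paper routes it, unnecessarily, through $D_k(\chi)$), and making explicit the bootstrap $(2)\Rightarrow(3a)\Rightarrow(1)$ before invoking \cref{lemma_20.1} for $(3b)$, where the paper is terse — are sound and, if anything, tighten the paper's argument.
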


\begin{proof}
 We proceed by proving that $1. \Rightarrow 2. \Rightarrow 3. \Rightarrow 1.$
\begin{enumerate}
    \item $\Rightarrow 2$.
Let $T \subseteq G^m$ be any left thick set. According to 1. we have that $D_{k}(\chi)$ is a left IP$^{\ast}$ set for any finite coloring $\chi$. Fix arbitrarily one coloring $\chi$. Hence let $X$ be any left IP set we have that $Y_T:=D_{k}(\chi) \cap X \subset X$ is non-empty. In particular, $Y_T$ is a left IP set, and for all $g \in Y_T$ we can find $x=x(g) \in G^m$ such that the finite set $\mathfrak{R}_{x,g,(1,\ldots,k)}$ is $\chi$-monochromatic. Since $T$ is left thick there exists some $y=y(g)$ such that \begin{align*}
        y\mathfrak{R}_{x,g,(1,\ldots,k)}=\mathfrak{R}_{yx,g,(1,\ldots,k)} \subseteq T.
    \end{align*}
    In particular there exists $z=z(g) \in G^m$, where $z=yx$, such that $\mathfrak{R}_{z,g,(1,\ldots,k)} \subseteq T$. \newline 
    Let $S \subseteq G^m$ be any right syndetic set. Then by definition, there exists a finite set $H \subset G^m$ such that 
\begin{align*}
    G^m = \bigcup_{h \in H} h^{-1} S.
\end{align*}
We can interpret this as a new finite coloring $\tilde{\chi}$ of $G^m$ using at most $r=\left| H \right|$ colors. According to 1. we have $Y_S:=D_{k}(\tilde{\chi}) \cap X$ is a left IP set. Hence for all $g \in Y_S$ we can find $ h=h(g) \in H$ and $x=x(g) \in G^m$ such that $\mathfrak{R}_{x,g,(1,\ldots,k)} \subseteq h^{-1}S$. Then \begin{align*}
        h\mathfrak{R}_{x,g,(1,\ldots,k)}=\mathfrak{R}_{hx,g,(1,\ldots,k)} \subseteq S.
    \end{align*}
     In particular there exists $z=z(g) \in G^m$, where $z=hx$, such that $\mathfrak{R}_{z,g,(1,\ldots,k)} \subseteq S$.
    \item $\Rightarrow 3$. Let $X$ be any left IP set and let $A \in \mathcal{W}$, since $\mathcal{W}$ has the almost right Van der Waerden property there exists either a right syndetic set $S$ such that for all finite $F \subseteq S$ we have that 
        \begin{align*}
           A_F= \bigcap_{f \in F} Af^{-1} \in \mathcal{W},
        \end{align*}
        either a left thick set $T$ such that for all finite $F \subseteq T$ we have that
        \begin{align*}
            A_F = \bigcap_{f \in F} Af^{-1} \in \mathcal{W}.
        \end{align*}
        Assume without loss of generality that there exists a right syndetic set with the above property. According to 2. there exists a left IP set $Y \subset X$ such that for all $g \in Y$ there exist $z=z(g) \in G^m$ such that 
    \begin{align*}
        \mathfrak{R}_{z,g,(1,\ldots,k)} \subseteq S.
    \end{align*}
Take as a finite set $F=\mathfrak{R}_{z,g,(1,\ldots,k)} \subset S$. For the sake of simplicity, let's denote $z_0=z$ and $z_j = \mathcal{R}_g^{(1)} \ldots \mathcal{R}_g^{(j)} z \in \mathfrak{R}_{z,g,(1,\ldots,k)}$. Hence we have that
    \begin{align*}
        A_F = \bigcap_{j=0}^{k} A z_j^{-1} \in \mathcal{W}.
    \end{align*}
     Let denote $(e_G,\ldots, e_G)=g_0 $ the neutral element of $G^m$, and $g_j = \mathcal{R}_{g}^{(1)} \ldots \mathcal{R}_{g}^{(j)} g_0 \in G^m $. Note that for all $0 \leq j \leq k$ we have that $z_j=z_0 g_j$, hence we have that
    \begin{align*}
        B = \bigcap_{j=0}^{k} A g_j^{-1} \in \mathcal{W}
    \end{align*}
    since we have $B=A_Fz_0$ and $\mathcal{W}$ is closed under right translation.
    Now it is straightforward to check that for all $b \in B$ \begin{align*}
         \{b,  \mathcal{R}_g^{(1)} b, \mathcal{R}_g^{(1)} \mathcal{R}_g^{(2)} b ,\ldots,  \mathcal{R}_g^{(1)} \ldots \mathcal{R}_g^{(k)} b  \} \subseteq A.
   \end{align*}
    In accordance with \cref{lemma_20.1} and using point 2. we can deduce that there exists $B \in \mathcal{W}$ such that for all $b\in B$ we have $\mathfrak{R}_{b,g,(2,\ldots,k+1)} \subseteq A$. Let $\tilde{B} = Bg_1^{-1}$, where $g_1 = \mathcal{R}_g^{(1)} g_0$, again by closure under right translation we have that $\tilde{B} \in \mathcal{W}$. Using the fact that $\mathcal{R}^{(1)} \mathcal{R}^{(j)} = \mathcal{R}^{(j)} \mathcal{R}^{(1)}$ for all $1 <j$, then for all $\tilde{b} \in \tilde{B}$, we have
    \begin{align*}
         \{  \mathcal{R}_g^{(1)} \tilde{b}, \mathcal{R}_g^{(1)} \mathcal{R}_g^{(2)} \tilde{b} ,\ldots,  \mathcal{R}_g^{(1)} \ldots \mathcal{R}_g^{(k+1)} \tilde{b}  \} \subseteq A.
   \end{align*}
   \item $\Rightarrow 1.$ Let $X$ be an arbitrary left IP set, if $G^m$ is colored using finitely many colors in accordance to some finite coloring $\chi : G^m \to \{1,\ldots, r\}$, then since $\mathcal{W}$ is partition regular, we have that there exists a $\chi$-monochromatic set $A \in \mathcal{W}$. Hence according to 3. we have that there exists a left IP set $ Y \subset X$, such that for all $g \in Y$, there is $B=B(g) \in \mathcal{W}$ such that for any $b \in B \subset G^m$, we have  
   \begin{align*}
        \mathfrak{R}_{b,g,(1,\ldots,k)} = \{b,  \mathcal{R}_g^{(1)} b, \mathcal{R}_g^{(1)} \mathcal{R}_g^{(2)} b ,\ldots,  \mathcal{R}_g^{(1)} \ldots \mathcal{R}_g^{(k)} b  \} \subseteq A.
   \end{align*}
   which is monochromatic with respect to $\chi$, and this imply that $D_{k}(\chi) \cap X \neq \emptyset$. Since $X$ was an arbitrary left IP set, we have that $D_{k}(\chi)$ intersects non-trivially every left IP set and thus it is a left IP$^{\ast}$ set.
   \end{enumerate}
    
\end{proof}

The following is the analog of \cref{thm_7.1.3} given in \cref{subsec:vdw amenable dimension 2}. The proof is essentially the same, since the only important property used by sets of positive density is that they are partition regular.
\begin{theorem}[Color focusing]\label{color_focusing}
    Assume that for any finite coloring $\chi$ of $G^m$ and for $1 \leq k-1 < m$ we have that $D_{k-1}(\chi)$ is left IP$^{\ast}$, then for any $r \in \mathbb{N}$ we have that for any left IP set $X$, there exists $\chi$-monochromatic sets $ A_0,\ldots, A_r \in \mathcal{W}$ and left IP sets $ Y_{r-1} \subset \ldots \subset Y_0 \subset X$ such that for all $0 \leq i < j \leq r$ there exists $u_{j,i} \in Y_i$ with
    \begin{align*}
        \{ \mathcal{R}_{u_{j,i}}^{(1)} a , \ldots , \mathcal{R}_{u_{j,i}}^{(1)} \ldots \mathcal{R}_{u_{j,i}}^{(k)} a : a \in A_j \} \subseteq A_i.
    \end{align*}
\end{theorem}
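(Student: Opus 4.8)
The plan is to run the same induction on $r$ as in the proof of \cref{thm_7.1.3}, replacing the family of sets of positive upper density by the family $\mathcal{W}$ and replacing the use of \cref{thm_7.1.2} by the appropriate clause of \cref{thm_7.2.1}. The only properties of positive-density sets used in \cref{thm_7.1.3} are partition regularity together with the conclusion of \cref{thm_7.1.2}, and both have exact counterparts here. Indeed, the hypothesis that $D_{k-1}(\chi)$ is left IP$^{\ast}$ for every finite coloring $\chi$ is precisely statement $1$ of \cref{thm_7.2.1} with $k$ replaced by $k-1$, so statement $3$ of \cref{thm_7.2.1} holds with $k-1$ as well. Its $\tilde{B}$-clause reads: for any left IP set $X$ and any $A \in \mathcal{W}$ there is a left IP set $Y = X \cap D \subset X$, with $D$ left IP$^{\ast}$, such that for every $g \in Y$ there exists $\tilde{B} = \tilde{B}(g) \in \mathcal{W}$ with $\{ \mathcal{R}_g^{(1)} \tilde{b}, \ldots, \mathcal{R}_g^{(1)} \ldots \mathcal{R}_g^{(k)} \tilde{b} \} \subseteq A$ for all $\tilde{b} \in \tilde{B}$. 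This is the verbatim analog of \cref{thm_7.1.2}, and it is the engine of the induction.

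For the base case $r = 1$, partition regularity of $\mathcal{W}$ yields a $\chi$-monochromatic $A_0 \in \mathcal{W}$. Applying the clause above to $X$ and $A_0$ produces a left IP set $Y_0 \subset X$, which I shrink to a product subsystem $Y_0 = \mathbf{FP}_L(y_n^{(0)})$. I fix $u_{1,0} \in Y_0$, take the associated $\tilde{B}(u_{1,0}) \in \mathcal{W}$, and extract by partition regularity a $\chi$-monochromatic $A_1 \in \mathcal{W}$ with $A_1 \subseteq \tilde{B}(u_{1,0})$, which gives the required containment. For the inductive step, given $A_0, \ldots, A_{r-1} \in \mathcal{W}$ and nested product subsystems $Y_{r-2} = \mathbf{FP}_L(y_n^{(r-2)}) \subset \ldots \subset Y_0 = \mathbf{FP}_L(y_n^{(0)})$, I apply the clause with $X = Y_{r-2}$ and $A = A_{r-1}$ to obtain $Y = Y_{r-2} \cap D \subset Y_{r-2}$. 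As in \cref{thm_7.1.3}, there is an idempotent $p \in (\beta G, \bullet)$ with every tail $\mathbf{FP}_L(y_n^{(r-2)} : n \geq m) \in p$; since $D$ is left IP$^{\ast}$ we have $D \in p$, hence $Y \in p$, and I may pass to a product subsystem $Y_{r-1} = \mathbf{FP}_L(y_n^{(r-1)}) \subseteq Y$ of $(y_n^{(r-2)})_{n \in \mathbb{N}}$. I then pick $u_{r,r-1} \in Y_{r-1}$ and, using the global-shift property of nested product subsystems recorded just before \cref{thm_7.1.3}, set $u_{r,i} := u_{r,r-1} u_{r-1,i} \in Y_i$ for $i < r-1$. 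Finally I take $\tilde{B}_r = \tilde{B}(u_{r,r-1}) \in \mathcal{W}$, extract a $\chi$-monochromatic $A_r \in \mathcal{W}$ inside it, and obtain the containment for $j = r$ by composing the $(r, r-1)$ inclusion with the inductive $(r-1, i)$ inclusion exactly as in \cref{thm_7.1.3}.

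The main obstacle is the step passing from the left IP set $Y \subset Y_{r-2}$ to a product subsystem $Y_{r-1} = \mathbf{FP}_L(y_n^{(r-1)})$ contained in $Y$: this requires $Y$ to lie in an idempotent $p$ containing all tails of $(y_n^{(r-2)})_{n \in \mathbb{N}}$, which is legitimate only because \cref{thm_7.2.1} delivers $Y$ in the form $Y_{r-2} \cap D$ with $D$ left IP$^{\ast}$ (so that $D \in p$ and hence $Y \in p$). Granting this, the remainder is a faithful transcription of the proof of \cref{thm_7.1.3}, since the sole structural property of $\mathcal{W}$ invoked beyond the $\tilde{B}$-clause is its partition regularity, guaranteed by the right van der Waerden property.
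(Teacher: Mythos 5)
Your proposal is correct and takes essentially the same route as the paper, whose proof of \cref{color_focusing} is precisely to repeat the induction of \cref{thm_7.1.3} with sets in the van der Waerden family $\mathcal{W}$ replacing sets of positive upper density, using the $\tilde{B}$-clause of statement 3 of \cref{thm_7.2.1} (applied with $k-1$) as the substitute for \cref{thm_7.1.2}. You additionally make explicit the one detail the paper leaves implicit, namely that $Y$ can be taken of the form $Y_{r-2} \cap D$ with $D$ left IP$^{\ast}$ (as the proof of \cref{thm_7.2.1} in fact produces), so that $Y$ lies in an idempotent containing all tails and the extraction of the product subsystem $Y_{r-1} = \mathbf{FP}_L(y_n^{(r-1)})$ is legitimate.
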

\begin{proof}
This is essentially a repetition of \cref{thm_7.1.3} where sets in a van der Waerden family of $G^m$ are replacing sets of positive upper density.
\end{proof}

The following is the analog of \cref{thm_7.1.4} \cref{subsec:vdw amenable dimension 2}. But in contrast to \cref{thm_7.1.4} we do induction on $k<m$, since we know that $D_2(\chi)$ is IP$^{\ast}$.
\begin{theorem}\label{thm_7.2.2} 
Let $G$ be a discrete, countable group. Let $\chi : G^m \to \{ 1, \ldots, r\}$ be any finite coloring of $G^m$. And let $1 \leq k-1 < m $.  Suppose that $D_{k-1}(\chi)$ is a left IP$^{\ast}$ set for all $\chi$, then $D_k(\chi)$ is a left IP$^{\ast}$ set for all $\chi$.
 \end{theorem}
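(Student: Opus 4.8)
The plan is to mirror the proof of \cref{thm_7.1.4} from \cref{subsec:vdw amenable dimension 2}, replacing the role played there by \cref{thm_7.1.3} with its van der Waerden analog \cref{color_focusing}, and replacing the sets of positive upper density with members of a van der Waerden family $\mathcal{W}$ on $G^m$. The key observation is that the standing hypothesis of \cref{thm_7.2.2}, namely that $D_{k-1}(\chi)$ is left IP$^{\ast}$ for every finite coloring $\chi$, is precisely the input required to run the color-focusing lemma \cref{color_focusing} at level $k$ (and, via the equivalence \cref{thm_7.2.1} applied with $k-1$, it is what guarantees the focusing data is available in $\mathcal{W}$).

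First I would fix an arbitrary finite coloring $\chi : G^m \to \{1,\ldots,r\}$ and an arbitrary left IP set $X$; since $X$ is arbitrary, it suffices to show $D_k(\chi) \cap X \neq \emptyset$. Invoking \cref{color_focusing} with the integer equal to the number $r$ of colors (legitimate because $D_{k-1}(\chi)$ is left IP$^{\ast}$ by hypothesis), I obtain $\chi$-monochromatic sets $A_0,\ldots,A_r \in \mathcal{W}$ together with nested left IP sets $Y_{r-1} \subset \cdots \subset Y_0 \subset X$ and, for each $0 \leq i < j \leq r$, a shift $u_{j,i} \in Y_i$ satisfying
\[
  \{ \mathcal{R}_{u_{j,i}}^{(1)} a, \ldots, \mathcal{R}_{u_{j,i}}^{(1)} \cdots \mathcal{R}_{u_{j,i}}^{(k)} a : a \in A_j \} \subseteq A_i.
\]

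The next step is a pigeonhole argument: there are $r+1$ sets $A_0,\ldots,A_r$ but only $r$ colors, so two of them, say $A_i$ and $A_j$ with $i<j$, receive the same color. Fixing any $a \in A_j$, the displayed inclusion together with the fact that $A_i$ is $\chi$-monochromatic of this common color shows that the corner
\[
  \mathfrak{R}_{a, u_{j,i}, (1,\ldots,k)} = \{ a, \mathcal{R}_{u_{j,i}}^{(1)} a, \ldots, \mathcal{R}_{u_{j,i}}^{(1)} \cdots \mathcal{R}_{u_{j,i}}^{(k)} a \}
\]
is $\chi$-monochromatic. By the definition of $D_k(\chi)$ this exhibits $u_{j,i} \in D_k(\chi)$, and since $u_{j,i} \in Y_i \subseteq X$ we conclude $D_k(\chi) \cap X \neq \emptyset$. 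As $X$ was an arbitrary left IP set, $D_k(\chi)$ is left IP$^{\ast}$.

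I do not expect a genuine obstacle at this stage, since the substantive work has already been absorbed into \cref{color_focusing} and into the equivalence \cref{thm_7.2.1}, which together convert the IP$^{\ast}$-ness of $D_{k-1}$ into the focusing data at level $k$. The only points requiring care are (i) that a van der Waerden family $\mathcal{W}$ on $G^m$ is available, i.e. that $G$ is a right van der Waerden group, so that \cref{color_focusing} genuinely applies; and (ii) the bookkeeping that $r+1$ monochromatic sets against $r$ colors forces a repetition, which is exactly what upgrades the one-sided inclusion into a full monochromatic corner and thus into membership in $D_k(\chi)$.
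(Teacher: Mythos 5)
Your proposal is correct and follows essentially the same route as the paper: the paper's proof of \cref{thm_7.2.2} is exactly the one-line instruction to repeat \cref{thm_7.1.4} with \cref{color_focusing} in place of \cref{thm_7.1.3}, which is the pigeonhole argument ($r+1$ monochromatic sets in $\mathcal{W}$ versus $r$ colors) that you spell out. Your two flagged points of care, that the hypothesis on $D_{k-1}(\chi)$ supplies the focusing data via \cref{thm_7.2.1} and that a van der Waerden family $\mathcal{W}$ on $G^m$ must be available for \cref{color_focusing} to apply, accurately reflect the implicit standing assumptions of the paper's \cref{subsec:vdw group}.
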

 \begin{proof}
     This is essentially a repetition of \cref{thm_7.1.4}, where \cref{color_focusing} is used instead of \cref{thm_7.1.3}
 \end{proof}
 \begin{theorem}\label{upgrade_thm}
    Let $G$ be a countable discrete and amenable group and $m \in \mathbb{N}$ be arbitrary. Let $\chi : G^m \to \{1 ,\ldots , r\}$ be any given finite coloring.
    \begin{enumerate}
        \item If $G$ is a right Van der Waerden group then $D_m(\chi)$ is left IP$^{\ast}$.
        \item If $G$ is a left Van der Waerden group then $S_m(\chi)$ is right IP$^{\ast}$.
    \end{enumerate}
\end{theorem}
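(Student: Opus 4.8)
The plan is to prove part 1 by induction on $k$, climbing from $D_1(\chi)$ up to $D_m(\chi)$ through the already-established induction step \cref{thm_7.2.2}, and then to obtain part 2 from part 1 by the left--right duality furnished by the anti-automorphism $g \mapsto g^{-1}$. Fix a right van der Waerden family $\mathcal{W}$ on $G^m$, which exists by \cref{def_vdw_group} since $G$ is a right van der Waerden group. The skeleton is: (i) a base case, $D_1(\chi)$ left IP$^{\ast}$ for every finite coloring $\chi$, which comes only from amenability; (ii) the induction step $D_{k-1}(\chi)\text{ left IP}^{\ast} \Rightarrow D_k(\chi)\text{ left IP}^{\ast}$ for $2 \leq k \leq m$, which is exactly \cref{thm_7.2.2} (and which internally feeds on \cref{thm_7.2.1} and \cref{color_focusing}, both of which require the family $\mathcal{W}$); and (iii) reading off $D_m(\chi)$ at the top of the chain. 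If $m=1$ the statement is just the base case.

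For the base case I would repeat the Furstenberg correspondence argument of \cref{thm_6.4}, but in $G^m$ and for the first-coordinate shift. Concretely, on $\Omega = \{0,1\}^{G^m}$ take the anti-action $U_{(g_1,\ldots,g_m)}\xi(a) = \xi((g_1,\ldots,g_m)a)$ and set $T_g := U_{(g,e_G,\ldots,e_G)}$, a measure-preserving anti-action. Given a finite coloring $\chi$, partition regularity of positive upper density selects a color class $E$ with $\overline{d}_{\Phi}(E) > 0$; the correspondence principle then produces a set $A$ with $\mu(A) = \overline{d}_{\Phi}(E) > 0$ and $\mu(A \cap T_g^{-1}A) \leq \overline{d}_{\Phi}(E \cap E(g^{-1},e_G,\ldots,e_G))$. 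By \cref{thm_6.2} the set $\{ g : \mu(A \cap T_g^{-1}A) > 0 \}$ is left IP$^{\ast}$, whence $\{ g : \overline{d}_{\Phi}(E \cap E(g^{-1},e_G,\ldots,e_G)) > 0 \}$ is left IP$^{\ast}$, and any $x$ in $E \cap E(g^{-1},e_G,\ldots,e_G)$ satisfies $\{ x, \mathcal{R}_g^{(1)}x \} \subseteq E$, i.e. $g \in D_1(\chi)$. Thus $D_1(\chi)$ is left IP$^{\ast}$ for every $\chi$, and the induction delivers $D_m(\chi)$ left IP$^{\ast}$, proving part 1.

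For part 2 the cleanest route is to rerun the entire argument with every occurrence of ``right'' replaced by ``left'', using the left van der Waerden family guaranteed by $G$ being a left van der Waerden group, together with the left versions of \cref{thm_6.2} (which the text already notes yields both left and right IP$^{\ast}$), \cref{thm_7.2.1}, and \cref{thm_7.2.2}. That this mirror argument is legitimate is explained conceptually by the anti-automorphism $\iota : g \mapsto g^{-1}$ extended coordinatewise to $G^m$: inversion carries $\mathbf{FP}_L(x_n)$ to $\mathbf{FP}_R(x_n^{-1})$, hence interchanges left IP with right IP sets and therefore left IP$^{\ast}$ with right IP$^{\ast}$; it turns left multiplication into right multiplication, carrying each left corner $\mathfrak{L}_{x,g,(1,\ldots,m)}$ to a right corner; and it interchanges the left and right van der Waerden structures (right syndetic with left syndetic, left thick with right thick, and the two translation-closure conditions). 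Under this correspondence the assertion that $S_m(\chi)$ is right IP$^{\ast}$ for a left van der Waerden group is precisely part 1 applied to the $\iota$-transformed data, amenability being insensitive to side since a group admits a right Følner sequence if and only if it admits a left one.

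I expect the main obstacle to be the base case: everything above the bottom rung is handed to us by \cref{thm_7.2.2}, so the only genuine work is re-deriving the $G^m$-analogue of the recurrence \cref{thm_6.4} for the first-coordinate anti-action and checking that the correspondence-principle inequality is exactly the one needed to place $\{ x, \mathcal{R}_g^{(1)}x \}$ inside a single color class. A secondary, purely bookkeeping hazard lies in part 2, where one must confirm that $\iota$ simultaneously swaps the left/right sides of IP and IP$^{\ast}$, swaps $\mathcal{L}$ with $\mathcal{R}$, and swaps the left and right van der Waerden families, so that part 1 applies verbatim to the transformed coloring and family.
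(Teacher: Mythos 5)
Your proposal is correct and takes essentially the same route as the paper, whose entire proof is the single line ``It follows from \cref{Main_VdW-2} and \cref{thm_7.2.2}'': an induction on the corner length $k$ driven by \cref{thm_7.2.2} (with \cref{thm_7.2.1} and \cref{color_focusing} supplying the van der Waerden family machinery), a base case coming from the amenable recurrence results of \cref{sec:density version of van der waerden}, and part 2 obtained by the inversion duality that the paper records in the remark immediately after \cref{upgrade_thm}. If anything, your base case is more careful than the paper's citation: \cref{Main_VdW-2} literally concerns colorings of $G^2$, whereas for $m>2$ the induction needs $D_1(\chi)$ (or $D_2(\chi)$) for colorings of $G^m$, and your re-derivation via the correspondence principle and \cref{thm_6.2} for the first-coordinate shift is exactly the generalization the paper gestures at in the discussion opening \cref{subsec:vdw amenable dimension 2}.
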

\begin{proof}
    This follows from \cref{Main_VdW-2} and \cref{thm_7.2.2}.  
\end{proof}
Let's remark that a group is right van der Waerden group if and only if it is a left van der Waerden group. Indeed $\mathcal{W}_1$ has the right van der Waerden property if and only if $\mathcal{W}_1^{-1}$ has the left van der Waerden property.
Let us recall that for a left piecewise syndetic set $A$ there exists a return set $L_A=\{x \in G : Ax^{-1} \in p\}$ which is a left syndetic set (see \cite[Theorem 4.39 on page 101]{idem}). For the family of left piecewise syndetic sets to possess the right van der Waerden property, it is sufficient that $L_A$ is right syndetic or left thick. Since FC-groups possess the property that every left syndetic sets are also right syndetic sets (see \cite{436093}), we deduce that FC-groups are van der Waerden groups.

 \begin{theorem}\label{FC-are VDW groups}
     Let $G$ be an FC-group. Then $\mathcal{W}=\{ A \subseteq G^m : A \text{ is left piecewise syndetic set}  \} $ has the right van der Waerden property in $G^m$. That is, $G$ is both a right and left van der Waerden group.
 \end{theorem}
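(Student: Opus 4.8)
The plan is to check, in turn, the three requirements of \cref{def_vdw_family} for the family $\mathcal{W}$ of left piecewise syndetic subsets of $G^m$; note first that $\mathcal{W}$ is nonempty and avoids $\emptyset$, since $G^m$ itself is (piecewise) syndetic while the empty set is not. The crucial preliminary observation is that $G^m$ is again an FC-group: conjugation in a direct product is coordinatewise, so the conjugacy class of $(g_1,\dots,g_m)$ is the product of the classes of the $g_i$, and a finite product of finite sets is finite. Consequently the key structural fact for FC-groups, namely that every left syndetic set is right syndetic (see \cite{436093}), is available in $G^m$. Partition regularity of piecewise syndetic sets is standard (see \cite{idem}), giving condition 2 at once.

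Condition 3, closure under right translation, is elementary. Given $A \in \mathcal{W}$, pick a finite $H \subseteq G^m$ with $\bigcup_{h \in H} A h^{-1}$ left thick, and fix $x \in G^m$. Setting $H' = \{hx : h \in H\}$, a one-line computation gives
\begin{align*}
    \bigcup_{h' \in H'} (Ax)(h')^{-1} = \bigcup_{h \in H} A x (hx)^{-1} = \bigcup_{h \in H} A h^{-1},
\end{align*}
which is left thick, so $Ax \in \mathcal{W}$.

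The heart of the argument is condition 1, the almost right van der Waerden property. Given $A \in \mathcal{W}$, I would invoke the characterization of left piecewise syndeticity to produce a minimal idempotent $p \in K(\beta G^m, \bullet)$ with $A \in p$, together with its return set $L_A = \{x \in G^m : A x^{-1} \in p\}$, which is left syndetic by \cite[Theorem 4.39 on page 101]{idem}. Here the FC hypothesis does its work: since $G^m$ is an FC-group, $L_A$ is in fact right syndetic, and I take $S := L_A$. For any finite $F \subseteq S$ and each $f \in F$ we have $A f^{-1} \in p$ by definition of $L_A$, so closure of the ultrafilter $p$ under finite intersection yields
\begin{align*}
    A_F = \bigcap_{f \in F} A f^{-1} \in p .
\end{align*}
Because $p$ lies in the smallest ideal $K(\beta G^m, \bullet)$, every member of $p$ is left piecewise syndetic, so $A_F \in \mathcal{W}$. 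This verifies option 1 of \cref{def_almost_vdw} with the right syndetic witness $S$, completing the almost right van der Waerden property.

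Assembling the three conditions shows $\mathcal{W}$ has the right van der Waerden property, so $G$ is a right van der Waerden group; the symmetry observation recorded just before the statement (that $\mathcal{W}$ has the right property if and only if $\mathcal{W}^{-1}$ has the left property) then makes $G$ a left van der Waerden group as well. The single genuinely non-routine step is the passage from \emph{left syndetic} to \emph{right syndetic} for $L_A$, and I expect this to be the main obstacle: it is precisely the point at which the FC property is indispensable and the reason the method stops short of general amenable groups. The only other point requiring care is the standard but not entirely trivial fact that membership in an idempotent of the smallest ideal forces piecewise syndeticity, which I would quote from \cite{idem}.
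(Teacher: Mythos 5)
Your proof is correct and follows essentially the same route as the paper's: take an ultrafilter in $K(\beta (G^m),\bullet)\cap\overline{A}$, get the return set $L_A=\{x \in G^m : Ax^{-1}\in p\}$ left syndetic via \cite[Theorem 4.39 on page 101]{idem}, upgrade it to right syndetic using the FC property of $G^m$, and close under finite intersections inside $p$ to land back in $\mathcal{W}$; the routine verifications you spell out (FC-ness of $G^m$, partition regularity, invariance under right translation) are simply asserted in the paper. One correction: left piecewise syndeticity of $A$ only guarantees an ultrafilter $p\in K(\beta (G^m),\bullet)$ with $A\in p$, \emph{not} a minimal idempotent containing $A$ --- sets belonging to minimal idempotents are exactly the central sets, and a piecewise syndetic set need not be central (the odd integers in $\mathbb{Z}$ are syndetic, hence piecewise syndetic, yet lie in no idempotent, since central sets are IP sets and a sum of two odd numbers is even). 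Since your argument never uses idempotency of $p$ --- only that $Af^{-1}\in p$ for each $f\in L_A$, that ultrafilters are closed under finite intersections, and that every member of an ultrafilter in the smallest ideal is left piecewise syndetic --- replacing ``minimal idempotent'' by ``ultrafilter in $K(\beta (G^m),\bullet)$'' repairs the misstatement and leaves your proof intact, at which point it coincides with the paper's.
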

 \begin{proof}
     Left piecewise syndetic sets are partition regular and are closed under right translation. Moreover, let $A \in \mathcal{W}$, take $p\in K(\beta (G^m), \bullet) \cap \overline{A}$, where $K(\beta (G^m), \bullet)$ denote the minimal two-sided ideal of $(\beta (G^m),\bullet)$. Since $G$ is an FC-group we have that also $G^m$ is an FC-group. We have that $L_A=\{x \in G^m : Ax^{-1} \in p \}$ is right syndetic in $G^m$ and thus since ultrafilters are closed under finite intersection, for every finite $F \subseteq L_A$ we have that
     \begin{align*}
         A_F = \bigcap_{f \in F} A f^{-1} \in p.
     \end{align*}
     Then we have that $p \in \overline{A_F} \cap K(\beta (G^m),\bullet)$ and we conclude that $A_F$ is left piecewise syndetic. Hence $A_F \in \mathcal{W}$. 
     
 \end{proof}

 \begin{proof}[Proof of \cref{van-der-waerden-holds VDW group}]
     It follows from \cref{upgrade_thm} and \cref{FC-are VDW groups}.
 \end{proof} 
 We proved that in the class of FC-groups the family $\mathcal{W}_1$ of left piecewise syndetic sets possesses the right van der Waerden property. To reach this conclusion, we took advantage of the fact that in FC-groups left syndetic sets and right syndetic sets coincide. As a consequence of this fact and a result of V. Paulsen in \cite{Fc-amenable}, we have that FC-groups are amenable.
However, to extend this result to general amenable groups, we faced a significant obstacle: left and right syndetic sets are not necessarily equals, this even fails in nilpotent groups. To better understand this challenge, let us consider the two ways to extend the group operation to the Stone-Čech compactification semigroup, $(\beta G, \bullet)$ and $(\beta G, \circ)$, making it a left or right topological compact semigroup respectively. In the first case, for all $p,q \in \beta G$ we have that $A \in p \bullet q $ if and only if $\{x \in G : Ax^{-1} \in p\} \in q $, while in the second case for all $p,q \in \beta G$ we have that $A \in p \circ q $ if and only if $\{x \in G : x^{-1}A \in q\} \in p$. If we consider $\mathcal{W}_1$ to be the family of left piecewise syndetic sets, it is relatively simple to show that in the left topological semigroup, for a left syndetic piecewise set $A \in \mathcal{W}_1$, we have an ultrafilter $p$ in the minimal two-sided ideal of $(\beta G, \bullet)$ containing $A$ and such that $L_A=\{ x \in G: Ax^{-1} \in p \}$ is left syndetic. In FC-groups, taking advantage of the fact that the concepts of left syndetic sets and right syndetic sets coincide, we can conclude that, due to the closure of ultrafilters with respect to finite intersection, the family of left piecewise syndetic sets possesses the right van der Waerden property. However, without this fundamental property on syndetic sets, there is no guarantee that $L_A$ is also right syndetic. If, on the other hand, we consider the family $\mathcal{W}_2$ of right piecewise syndetic sets and $A \in \mathcal{W}_2$, it is true that there exists an ultrafilter $p$ in the minimal two-sided ideal of $(\beta G, \circ)$ containing $A$ and such that $R_A=\{ x \in G: x^{-1}A \in p \}$ is right syndetic. However, the shift occurs on the other side and therefore nothing guarantees us the existence of a right syndetic set $S$ such that for all finite $F \subseteq S$ we have that
    \begin{align*}
        A_F = \bigcap_{f \in F} Af^{-1} \in \mathcal{W}.
    \end{align*}
The difficulty in addressing this question arises from the peculiar characteristics of operations in the topological left and right semigroups of the Stone-Čech compactification, where objects behave in a way that follows the opposite "side" from the one we would need. Therefore, we have shown that if it is true that in an amenable group $G$ we have that for every left piecewise syndetic set $A$ there exists an ultrafilter $p$ in the minimal two-sided ideal of $(\beta G, \bullet)$ such that $L_A = \{ x \in G : Ax^{-1} \in p\}$ is right syndetic, then this implies IP-van der Waerden Theorem holds in $G^m$ with $m$ arbitrary. We looked for a counterexample to this stronger proposition, without success. We have found examples of sets $X$ that are left syndetic, not left thick, and not right syndetic in the amenable group $\mathbb{Z}_2 \ast \mathbb{Z}_2 = \left< a,b \mid a^2=b^2=1\right>$ but we also showed that those examples cannot be a set of the form $L_A$ for some left piecewise syndetic set $A$. \newline
We don't know the answer to the following:
\begin{question}
Let $G$ be an amenable group
    \begin{enumerate}
        \item Does the family $\mathcal{W}_1=\{ A \subseteq G^m : A \text{ is left piecewise syndetic set}  \} $ have the right van der Waerden property in $G^m$?
        \item Let $A$ be a left piecewise syndetic set. Let $p \in (\beta G, \bullet)$ be such that $A \in p$. Is $\{ x \in G : Ax^{-1} \in p\} $ right syndetic?
        \item Does the family $\mathcal{W}_3=\{ A \subseteq G^m : A \text{ is both left and right piecewise syndetic set}  \} $ have both the right and the left van der Waerden property in $G^m$?
        \item Is $G=\mathbb{Z}_2 \ast \mathbb{Z}_2$ a van der Waerden group?
        \item Does there exists an amenable group which is not a van der Waerden group?

    \end{enumerate}
\end{question}

\subsection{Acknowledgments}

This paper is the shorter version of my master's thesis at École polytechnique fédérale de Lausanne. I want to thank my esteemed supervisors, Ethan Ackelsberg and Prof. Florian K. Richter, for their invaluable teachings and guidance throughout this study. I am also deeply appreciative of Prof. Vitaly Bergelson for kindly sharing his expertise in my work and for his valuable feedback.



\begin{thebibliography}{99}

    \bibitem{Austin2013NonconventionalEA}
    T. Austin,
    \textit{Non-conventional ergodic averages for several commuting actions of an amenable group},
    Journal d'Analyse Mathématique, 130 (2013), 243--274.
    \href{https://api.semanticscholar.org/CorpusID:119325118}{URL}

    \bibitem{free}
    V. Bergelson, and N. Hindman,
    \textit{Some topological semicommutative van der Waerden type theorems and their combinatorial consequences},
    J. London Math. Soc. (2), 45 (1992), no. 3, 385--403.
    \href{https://doi.org/10.1112/jlms/s2-45.3.385}{DOI}

    \bibitem{shift-pws}
    V. Bergelson, N. Hindman, and R. McCutcheon,
    \textit{Notions of size and combinatorial properties of quotient sets in semigroups},
    Proceedings of the 1998 Topology and Dynamics Conference (Fairfax, VA), Topology Proc., 23 (1998), 23--60.

        \bibitem{discordant}
    V. Bergelson, J. Huryn, R. Raghavan,
    \textit{Discordant sets and ergodic Ramsey theory},
    Involve, 15 (2022), no. 1, 89--130.
    \href{https://doi.org/10.2140/involve.2022.15.89}{DOI}

    \bibitem{Bergelson2015NewPA}
    V. Bergelson, J. H. Johnson, and J. Moreira,
    \textit{New polynomial and multidimensional extensions of classical partition results},
    J. Comb. Theory A, 147 (2015), 119--154.
    \href{https://api.semanticscholar.org/CorpusID:42673273}{URL}

    \bibitem{Central}
    V. Bergelson, and R. McCutcheon,
    \textit{Central sets and a non-commutative Roth theorem},
    Amer. J. Math., 129 (2007), no. 5, 1251--1275.
    \href{https://doi.org/10.1353/ajm.2007.0031}{DOI}

    \bibitem{density-folner}
    V. Bergelson, R. McCutcheon, and Q. Zhang,
    \textit{A Roth theorem for amenable groups},
    Amer. J. Math., 119 (1997), no. 6, 1173--1211. \href{http://muse.jhu.edu/journals/american_journal_of_mathematics/v119/119.6bergelson.pdf}{URL}


    \bibitem{Bergelson1996ErgodicRT}
    V. Bergelson,
    \textit{Ergodic Ramsey Theory–an Update},
    1996.
    \href{https://api.semanticscholar.org/CorpusID:5698999}{URL}

      \bibitem{ultrafilter-partition-regularss}
    W. Brian, and P. Oprocha,
    \textit{Ultrafilters and Ramsey-type shadowing phenomena in topological dynamics},
    Israel J. Math., 227 (2018), no. 1, 423--453.
    \href{https://doi.org/10.1007/s11856-018-1739-4}{DOI}
 
    \bibitem{436093}
    Y. de Cornulier,
    \textit{Left syndeticity and right syndeticity in nilpotent group},
    MathOverflow, 2022.
    \href{https://mathoverflow.net/q/436093}{URL}
    
    \bibitem{Furstenberg}
    H. Furstenberg,
    \textit{Recurrence in Ergodic Theory and combinatorial Number Theory},
    Princeton University Press, 1981.

    \bibitem{Furstenberg1978TopologicalDA}
    H. Furstenberg, and B. Weiss,
    \textit{Topological dynamics and combinatorial number theory},
    Journal d’Analyse Mathématique, 34 (1978), 61--85.
    \href{https://api.semanticscholar.org/CorpusID:121646951}{URL}

    \bibitem{idem}
    N. Hindman, and D. Strauss,
    \textit{Algebra in the Stone-Čech compactification. Theory and Applications},
    de Gruyter Exp. Math. vol 27, de Gruyter, Berlin, 1998.
  
    \bibitem{FC-group}
    H. Meyn, 
    \textit{FC-groups and related classes},
    Rend. Sem. Mat. Univ. Padova, 47 (1972), 65--75.
    \href{http://www.numdam.org/item?id=RSMUP_1972__47__65_0}{URL}

    \bibitem{Fc-amenable}
    V. I. Paulsen,
    \textit{Syndetic sets and amenability},
    Proc. Amer. Math. Soc., 140 (2012), no. 6, 1997--2001.
    \href{https://doi.org/10.1090/S0002-9939-2011-11247-4}{DOI}   

    \bibitem{Rado1945NoteOC}
    R. Rado,
    \textit{Note on Combinatorial Analysis},
    Proceedings of The London Mathematical Society, 1945, 122--160.
    \href{https://api.semanticscholar.org/CorpusID:121224215}{URL}

    \bibitem{original-work}
    B. L. van der Waerden,
    \textit{Wie der Beweis der Vermutung von Baudet gefunden wurde},
    Elemente der Mathematik, 53 (1998), no. 4, 139--148.
    \href{https://doi.org/10.1007/s000170050045}{DOI}


    \bibitem{Witt}
    E. Witt,
    \textit{Ein kombinatorisches Satz der Elementargeometrie},
    Mathematische Nachrichten, 1951, 261--262.

\end{thebibliography}
\end{document}